\newcommand{\dbar}{\ensuremath{\overline\partial}}
\newcommand{\dbarstar}{\ensuremath{\overline\partial^*}}
\newcommand{\C}{\ensuremath{\mathbb{C}}}
\newcommand{\R}{\ensuremath{\mathbb{R}}}
\newcommand{\D}{\ensuremath{\mathbb{D}}}
\newcommand{\sumprime}{\if@display\sideset{}{'}\sum%
            \else\sum'\fi}
\begin{document}

\numberwithin{equation}{section}

\newtheorem{theorem}{Theorem}[section]
\newtheorem{proposition}[theorem]{Proposition}
\newtheorem{conjecture}[theorem]{Conjecture}
\def\theconjecture{\unskip}
\newtheorem{corollary}[theorem]{Corollary}
\newtheorem{lemma}[theorem]{Lemma}
\newtheorem{observation}[theorem]{Observation}
\theoremstyle{definition}
\newtheorem{definition}{Definition}
\numberwithin{definition}{section}
\newtheorem{remark}{Remark}
\def\theremark{\unskip}
\newtheorem{question}{Question}
\def\thequestion{\unskip}
\newtheorem{example}{Example}
\def\theexample{\unskip}
\newtheorem{problem}{Problem}

\def\spc{strictly pseudo-convex}
\def\psh{plurisubharmonic}
\def\spsh{strictly plurisubharmonic}
\def\nbd{neighborhood\ }
\def\nbds{neighborhoods\ }
\def\iff{if and only if }
\def\comtanX{C\otimes T(X)}
\def\comtanO{C\otimes T(\Omega)}
\def\dpartial{\overline{\partial}}
\def\Apc{Andreotti pseudoconcave\ }

\def\intprod{\mathbin{\lr54}}
\def\reals{{\mathbb R}}
\def\integers{{\mathbb Z}}
\def\Z{{\mathbb Z}}
\def\D{{\mathbb D}}
\def\naturals{{\mathbb N}}
\def\N{{\mathbb N}}
\def\P{{\mathbb P}}
\def\complex{{\mathbb C}\/}
\def\distance{\operatorname{distance}\,}
\def\crit{\operatorname{crit}\,}
\def\spec{\operatorname{Spec}\,}
\def\sgn{\operatorname{sgn}\,}
\def\hom{\operatorname{Hom}\,}
\def\trace{\operatorname{Tr}\,}
\def\support{\operatorname{Supp}\,}
\def\supp{\operatorname{Supp}\,}
\def\ZZ{ {\mathbb Z} }
\def\e{\varepsilon}
\def\p{\partial}
\def\rp{{ ^{-1} }}
\def\Re{\operatorname{Re\,} }
\def\Im{\operatorname{Im\,} }
\def\dbarb{\bar\partial_b}
\def\eps{\varepsilon}

\def\vvv{\ensuremath{\mid\!\mid\!\mid}}
\def\intprod{\mathbin{\lr54}}
\def\reals{{\mathbb R}}
\def\integers{{\mathbb Z}}
\def\N{{\mathbb N}}
\def\complex{{\mathbb C}\/}
\def\distance{\operatorname{distance}\,}
\def\spec{\operatorname{spec}\,}
\def\interior{\operatorname{int}\,}
\def\trace{\operatorname{tr}\,}
\def\cl{\operatorname{cl}\,}
\def\essspec{\operatorname{esspec}\,}
\def\range{\operatorname{\mathcal R}\,}
\def\kernel{\operatorname{\mathcal N}\,}
\def\dom{\operatorname{\mathcal D}\,}
\def\linearspan{\operatorname{span}\,}
\def\lip{\operatorname{Lip}\,}
\def\sgn{\operatorname{sgn}\,}
\def\Z{ {\mathbb Z} }
\def\e{\varepsilon}
\def\p{\partial}
\def\rp{{ ^{-1} }}
\def\Re{\operatorname{Re\,} }
\def\Im{\operatorname{Im\,} }
\def\dbarb{\bar\partial_b}
\def\OL{\overline{L}}

\def\Hs{{\mathcal H}}
\def\E{{\mathcal E}}
\def\scriptu{{\mathcal U}}
\def\scriptr{{\mathcal R}}
\def\scripta{{\mathcal A}}
\def\scriptg{{\mathcal G}}
\def\scripti{{\mathcal I}}
\def\scriptl{{\mathcal L}}
\def\scripth{{\mathcal H}}
\def\scriptn{{\mathcal N}}
\def\scripte{{\mathcal E}}
\def\scripts{{\mathcal S}}
\def\scriptt{{\mathcal T}}
\def\scriptb{{\mathcal B}}
\def\scriptf{{\mathcal F}}
\def\scripto{{\mathfrak o}}
\def\scriptv{{\mathcal V}}
\def\frakg{{\mathfrak g}}
\def\frakG{{\mathfrak G}}

\def\ov{\overline}
\date {Printed \today.}

\author{Siqi Fu and Howard Jacobowitz}

\begin{abstract}
We study spectral behavior of the complex Laplacian
on forms with values in the $k^{\text{th}}$ tensor power of a holomorphic
line bundle over a smoothly bounded domain with degenerated boundary
in a complex manifold. In particular, we prove that in the two dimensional case,
a pseudoconvex domain is of finite type if and only if for any positive constant
$C$, the number of eigenvalues of the $\overline\partial$-Neumann Laplacian 
less than or equal to $Ck$ grows polynomially as $k$ tends to infinity.
\end{abstract}

\thanks
{Research supported in part by NSF grants.}
\address{Department of Mathematical Sciences,
Rutgers University, Camden, NJ 08102}
\email{sfu@camden.rutgers.edu, jacobowi@camden.rutgers.edu}
\title[]  
{The $\dbar$-cohomology groups, holomorphic Morse inequalities, and finite type conditions} \maketitle

\noindent{\it Dedicated to Professor J. J. Kohn on the occasion of his $75^{\text{\rm th}}$ birthday.}

\tableofcontents

\section{Introduction}\label{sec:intro}

A classical theorem of Siegel \cite{Siegel} says that the dimension of
global holomorphic sections of the $k^{\text{th}}$ tensor power $E^k$
of a holomorphic line bundle $E$ over a compact complex manifold $X$ of dimension $n$
grows at a rate of at most $k^n$ as $k$ tends to infinity. This theorem
has important implications in complex algebraic geometry. For example, Siegel proved that, as a consequence, the algebraic degree of $X$ ({\it i.e.}, the transcendence degree of the field of meromorphic functions on $X$) is less than or equal to $n$. (We refer the reader to \cite{A2} for an exposition of relevant results.)

The classical Morse inequalities on compact Riemannian manifolds,
relating the Betti numbers to the Morse indices, showcase
interplays among analysis, geometry, and topology ({\it e.g.},
\cite{Milnor63}). In an influential paper \cite{Witten82}, Witten
provided an analytic approach to the Morse inequalities.
Instead of studying the deRham complex directly,
Witten used the twisted deRham complex $d_{tf}=e^{-tf}d e^{tf}$, where
$d$ is the exterior differential operator and $f$ the Morse function.
The Morse inequalities then follows from spectral analysis of the twisted
Laplace-Beltrami operator by letting $t\to\infty$. (See, for
example, \cite{HelfferSjostrand85, Bismut86, Zhang01,
HelfferNier05} and references therein for detailed expositions of
Witten's approach.)

Asymptotic Morse inequalities for compact
complex manifolds were established by Demailly (\cite{Demailly85}; see
also \cite{Demailly89}). Demailly's Morse inequalities were inspired in part
by Siu's solution \cite{Siu85} to the Grauert-Riemenschneider
conjecture \cite{GR70} which states that a compact complex manifold with
a semi-positive holomorphic line bundle that is positive on a dense subset
is necessarily Moishezon ({\it i.e.}, its algebraic degree is the
same as the dimension of the manifold). It is
noteworthy that whereas the underpinning of Witten's approach is a
semi-classical analysis of Schr\"{o}dinger operators without
magnetic fields, Demailly's holomorphic Morse inequality is
connected to Schr\"{o}dinger operators with strong magnetic
fields. (Interestingly, a related phenomenon also occurs in
compactness in the $\dbar$-Neumann problem for Hartogs domains in
$\C^2$ (see \cite{FuStraube02,ChristFu05}): Whereas Catlin's
property ($P$) can be phrased in terms of semi-classical limits of
non-magnetic Schr\"{o}dinger operators, compactness of the
$\dbar$-Neumann operator reduces to Schr\"{o}dinger operators with
degenerated magnetic fields.) More recently, Berman established a local version of
holomorphic Morse inequalities on compact complex manifolds \cite{Berman04} and generalized
Demailly's holomorphic Morse inequalities to complex manifolds with non-degenerated boundaries \cite{Berman05}.

Here we study spectral behavior of the complex Laplacian for a relatively compact
domain in a complex manifold whose boundary has a  degenerated Levi form. In particular, we
are interested in Siegel type estimates for such a domain.
Let $\Omega\subset\subset X$ be a domain with smooth boundary in a complex manifold of dimension $n$.  Let $E$ be a holomorphic line bundle over $\Omega$ that extends smoothly to
$b\Omega$. Let $h^{q}(\Omega, E)$ be the dimension
of the Dolbeault cohomology group on $\Omega$ for $(0, q)$-forms with values
in $E$. Let $\widetilde{h}^{q}(\Omega, E)$ be the dimension of the corresponding $L^2$-cohomology group for the $\dbar$-operator (see Section~\ref{sec:prelim} for
the precise definitions). It was proved by H\"{o}rmander that when $b\Omega$
satisfies conditions $a_q$ and $a_{q+1}$\footnote{Recall that the boundary
$b\Omega$ satisfies condition $a_q$ if the Levi form of its
defining function has either at least $q+1$ negative eigenvalues
or at least $n-q$ positive eigenvalues at every boundary point}, then these two cohomology groups
are isomorphic. Furthermore, there exists a defining function $r$ of $\Omega$ and a constant $c_0$, independent of $E$, such that these cohomology groups are isomorphic to their counterparts
on $\Omega_c=\{z\in\Omega \mid r(z)<-c\}$ for all $c\in (0, c_0)$.
Our first result is an observation that combining H\"{o}rmander's theorems \cite{Hormander65} with a theorem of Diederich and Forn{\ae}ss \cite{DiederichFornaess77} yields the following:

\begin{theorem}\label{th:maintheorem1}
 Let $\Omega$ be pseudoconvex. Assume that there exists a neighborhood
 $U$ of $b\Omega$ and a bounded continuous function whose complex hessian is bounded from below by a positive constant on $U\cap\Omega$.  Then $h^q(\Omega, E)=\widetilde{h}^q(\Omega, E)$ for
all $1\le q\le n$. Furthermore, there exists a defining
function of $\Omega$ and a constant $c_0>0$, independent of $E$,
such that $b\Omega_c$ is strictly pseudoconvex and $h^q(\Omega,
E)=\widetilde h^q(\Omega_c, E)$ for all $c\in (0, c_0)$.
\end{theorem}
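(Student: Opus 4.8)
The plan is to show that under the stated hypothesis, $\Omega$ admits a defining function whose $-\log(-r)$ is plurisubharmonic on a neighborhood of $b\Omega$ intersected with $\Omega$ — this is precisely the Diederich–Fornæss bounded exhaustion phenomenon — and then to feed this into Hörmander's isomorphism theorems. Concretely, first I would observe that pseudoconvexity of $\Omega$ together with the existence of a bounded continuous function with complex hessian bounded below by a positive constant on $U\cap\Omega$ is exactly the setting of the Diederich–Fornæss theorem \cite{DiederichFornaess77}, which produces a smooth defining function $r$ of $\Omega$ and an exponent $\eta\in(0,1)$ such that $-(-r)^{\eta}$ is strictly plurisubharmonic on $U'\cap\Omega$ for some possibly smaller neighborhood $U'$ of $b\Omega$. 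In particular, $\Omega$ carries a bounded strictly plurisubharmonic exhaustion function near the boundary.

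Next I would invoke Hörmander's theorems from \cite{Hormander65}. Since $\Omega$ is pseudoconvex, the boundary $b\Omega$ satisfies condition $a_q$ for every $q\ge 1$ (indeed at every boundary point the Levi form of a pseudoconvex defining function is positive semidefinite, so it has at least, hmm, we need the $n-q$ positive eigenvalues clause — for $q\ge 1$ pseudoconvexity alone does not give $n-q$ positive eigenvalues, but the point is that the bounded strictly plurisubharmonic exhaustion supplies the missing positivity in Hörmander's $L^2$ estimates). The existence of the bounded plurisubharmonic exhaustion $-(-r)^{\eta}$ near $b\Omega$ lets one run Hörmander's weighted $L^2$ machinery to conclude, on the one hand, that the $L^2$-cohomology groups $\widetilde h^q(\Omega,E)$ are finite-dimensional and agree with the Dolbeault cohomology groups $h^q(\Omega,E)$, and on the other hand, that these groups are stable under shrinking: they coincide with $\widetilde h^q(\Omega_c,E)$ for the sublevel sets $\Omega_c=\{r<-c\}$, $c\in(0,c_0)$, with $c_0$ depending only on $r$ and not on $E$ (because the weighted estimates are uniform in $E$). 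For the final assertion I would note that, by Sard's theorem applied to $r$, one may further arrange (after an arbitrarily small perturbation of $r$, or just by choosing $c_0$ small) that $-c$ is a regular value of $r$, so $b\Omega_c$ is smooth; and since $-(-r)^{\eta}$ is strictly plurisubharmonic on $U'\cap\Omega$, its restriction to a neighborhood of $b\Omega_c$ is a strictly plurisubharmonic defining-type function for $\Omega_c$, whence $b\Omega_c$ is strictly pseudoconvex.

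The main obstacle — really the only non-bookkeeping point — is verifying that Hörmander's isomorphism and stability theorems, originally stated under the geometric conditions $a_q$ and $a_{q+1}$ on the Levi form, can be applied here where those conditions fail but are compensated by the Diederich–Fornæss bounded plurisubharmonic exhaustion. I would handle this by checking that the only place $a_q$, $a_{q+1}$ enter Hörmander's proof is to guarantee the basic weighted $a~priori$ estimate $\|\phi\|^2 \lesssim \|\dbar\phi\|^2 + \|\dbarstar\phi\|^2$ on $(0,q)$-forms with a gain controlled uniformly in the weight, and that near a pseudoconvex boundary this estimate follows instead from Hörmander's fundamental inequality using the Hessian of $-\log(-r)$ (equivalently $-(-r)^{\eta}$) as the source of positivity — exactly the Donnelly–Fefferman/Berndtsson-type mechanism. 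Once that substitution is justified, finiteness, the comparison $h^q=\widetilde h^q$, and the $E$-independent stability on $\Omega_c$ all follow verbatim from Hörmander's arguments, and the strict pseudoconvexity of $b\Omega_c$ is immediate from the strict plurisubharmonicity established by Diederich–Fornæss.
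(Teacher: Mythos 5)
Your first step (the Diederich--Forn{\ae}ss exponent) is the right starting point, although in a general complex manifold, with only the hypothesis of a bounded function of uniformly positive complex hessian near $b\Omega$, you cannot simply cite \cite{DiederichFornaess77}: one needs the Sibony-style computation (Proposition~\ref{prop:d-f}) producing a smooth defining function $\tilde r$ and $\eta\in(0,1)$ with $\partial\dbar\bigl(-(-\tilde r)^\eta\bigr)(\xi,\bar\xi)\gtrsim (-\tilde r)^{\eta}\,|\xi|^2_h$ on $U\cap\Omega$. That is a minor point. The genuine gap is in how you propose to use it. You claim that conditions $a_q$, $a_{q+1}$ enter H\"{o}rmander's isomorphism theorem only through a basic a priori estimate $\|\phi\|^2\lesssim \|\dbar\phi\|^2+\|\dbarstar\phi\|^2$ ``with a gain,'' and that the Diederich--Forn{\ae}ss function supplies this positivity near the boundary. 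As stated this cannot be right: such an estimate on all of $\Omega$ would force $\widetilde h^q(\Omega,E)=0$, whereas the theorem is an isomorphism statement for possibly nonvanishing cohomology; and near a weakly pseudoconvex boundary, possibly of infinite type, no gain (subelliptic or compactness) estimate is available in general --- detecting finite type through exactly this kind of spectral behavior is the point of the paper's main theorem. The Diederich--Forn{\ae}ss hessian bound degenerates like $|\rho|$ at $b\Omega$, and the Donnelly--Fefferman/Berndtsson twisting yields $L^2$ solvability bounds, not the coercive-type estimate (nor the boundary regularity) on which H\"{o}rmander's $a_q$-based comparison of Dolbeault and $L^2$ cohomology rests. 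So the ``substitution'' you single out as the only non-bookkeeping point is unjustified and would fail.

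What actually closes the argument --- and is missing from your sketch --- is the reduction to the strictly pseudoconvex sublevel sets, which avoids any estimate at $b\Omega$ altogether. Since $-(-\tilde r)^\eta$ is strictly plurisubharmonic on $U\cap\Omega$, the domains $\Omega_c=\{\tilde r<-c\}$, $0<c<c_0$, have strictly pseudoconvex boundary and the collar $\Omega\setminus\overline{\Omega}_c$ carries a strictly plurisubharmonic function; this is precisely the hypothesis of Theorem~3.4.9 in \cite{Hormander65}, which gives $h^q(\Omega,E)=\widetilde h^q(\Omega_c,E)$ with $c_0$ determined by $\tilde r$ alone (this, not any ``uniformity of weighted estimates in $E$,'' is why $c_0$ is independent of $E$). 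The remaining equality $\widetilde h^q(\Omega,E)=\widetilde h^q(\Omega_c,E)$ is not ``verbatim'' in H\"{o}rmander: one must prove the restriction map on $L^2$-cohomology is bijective, which the paper does by an inductive approximation argument over a sequence $c_j\downarrow 0$ using Theorems~3.4.6 and 3.4.7 (density of restrictions of $\dbar$-closed forms for surjectivity; solving $\dbar$ on increasing subdomains with telescoping corrections for injectivity). Your proposal contains neither this reduction nor the approximation scheme, so the central claims $h^q(\Omega,E)=\widetilde h^q(\Omega,E)$ and the stability on $\Omega_c$ remain unproved. The one part that does go through as written is the strict pseudoconvexity of $b\Omega_c$ (and no Sard argument is needed, since $d\tilde r\neq 0$ near $b\Omega$).
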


It is well known that a smooth pseudoconvex domain of finite type in a complex surface satisfies the assumption in the above theorem (\cite{Catlin89, FornaessSibony89}; see
Section~\ref{sec:equiv}). Together with Berman's result, one then obtains a holomorphic Morse inequality for such pseudoconvex domains.  In particular, $h^q(\Omega, E^k)\le C k^n$, $1\le q\le n$, for some constant $C>0$.  For pseudoconcave domains, we have

\begin{theorem}\label{th:maintheorem2} Assume that $\Omega$ is
pseudoconcave and $b\Omega$ does not contain the germ of any  complex hypersurface.
Then $h^0(\Omega, E^k)\le Ck^n$ for some constant $C>0$.
\end{theorem}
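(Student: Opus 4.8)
The plan is to run Siegel's Schwarz--lemma argument in the form used by Andreotti for pseudoconcave manifolds, tracking the curvature of $E^k$ so that the exponent comes out exactly $n$. Fix once and for all a smooth Hermitian metric $h$ on $E$ (it exists because $E$ extends smoothly to $b\Omega$), and equip $E^k$ with the induced metric $h^k$; its Chern curvature is $k$ times that of $(E,h)$, so there is a constant $A>0$, \emph{independent of $k$}, such that whenever $E$ is holomorphically trivialized over a coordinate ball $B$ one has $h^k=e^{-k\varphi}$ on $B$ with $|\varphi|\le A$ on a fixed slightly smaller concentric ball.

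The single place where pseudoconcavity enters is a maximum principle. Because $\Omega$ is pseudoconcave and $b\Omega$ contains no germ of a complex hypersurface, the continuity principle (Kontinuit\"atssatz) applies at \emph{every} boundary point, and the classical theory of pseudoconcave manifolds (\cite{A2}) provides a compact set $K\subset\subset\Omega$, depending only on the geometry of $\Omega$, such that every $f\in\mathcal O(\Omega)$ is bounded and $\sup_\Omega|f|=\sup_K|f|$. Applying the disc argument in local trivializations, where the only loss is the bounded weight $e^{-k\varphi}$, yields the version I actually need: there is $C>0$, independent of $k$, with $\sup_\Omega|s|_{h^k}\le e^{Ck}\sup_K|s|_{h^k}$ for every $s\in H^0(\Omega,E^k)$ (and in particular $h^0(\Omega,E^k)<\infty$). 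The no-hypersurface hypothesis is precisely what keeps the Hartogs figures from being blocked somewhere on $b\Omega$: if $b\Omega$ contained a complex hypersurface $V$, then a local equation $g$ of $V$ near a point of $V$ would give rise to unbounded holomorphic functions on $\Omega$ and the statement would already fail for $k=0$.

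With this in hand I set up the counting. Cover $K$ by finitely many coordinate balls $B(p_1,r_1),\dots,B(p_m,r_m)$ whose doubled balls $B(p_j,R_j)$, $R_j>r_j$, lie in $\Omega$ and in trivializing charts of $E$, and put $\rho=\max_j(r_j/R_j)<1$ and $c_1:=(C+2A)/\log(1/\rho)$. Let $N=N(k)$ be the least integer exceeding $c_1 k$. \emph{Claim:} the jet map
\[
J_N\colon H^0(\Omega,E^k)\longrightarrow\bigoplus_{j=1}^m J^{N-1}_{p_j}(E^k),
\]
sending $s$ to its collection of $(N-1)$-jets at $p_1,\dots,p_m$, is injective. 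Indeed, let $s$ be in its kernel; rescale so $M:=\sup_\Omega|s|_{h^k}=1$, so $\sup_K|s|_{h^k}\ge e^{-Ck}$ and this supremum is attained at some $q\in K$, say $q\in B(p_{j_0},r_{j_0})$. In the chosen trivialization over $B(p_{j_0},R_{j_0})$ write $s\leftrightarrow f$, a holomorphic function with $|f|\le e^{kA/2}$ there (from $|f|^2e^{-k\varphi}\le 1$, $|\varphi|\le A$) and vanishing to order $\ge N$ at $p_{j_0}$. The one-variable Schwarz lemma along complex lines through $p_{j_0}$ gives $|f(q)|\le\rho^N e^{kA/2}$, hence $e^{-Ck}\le|s(q)|_{h^k}=|f(q)|e^{-k\varphi(q)/2}\le\rho^N e^{kA}$, i.e. $N\log(1/\rho)\le(C+A)k<c_1 k$, contradicting the choice of $N$. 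Thus $s\equiv0$, $J_N$ is injective, and since $\dim J^{N-1}_{p_j}(E^k)=\binom{N-1+n}{n}$,
\[
h^0(\Omega,E^k)\ \le\ m\binom{N-1+n}{n}\ \le\ C'N^n\ \le\ Ck^n .
\]

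The only substantive step is the maximum-principle assertion of the second paragraph: producing the compact core $K$ from the pseudoconcavity of $b\Omega$ and verifying that the hypothesis on $b\Omega$ not containing a germ of a complex hypersurface is exactly what makes the continuity principle fill $\Omega$ from $K$. Everything after that is the bookkeeping of Siegel's method. I would therefore expect the write-up to isolate that step and invoke Andreotti's results for it rather than reprove them, and then to present the counting argument above essentially verbatim.
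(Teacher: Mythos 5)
Your counting argument is fine and is essentially Siegel's, the same bookkeeping the paper carries out in its Lemma on jets (your metric bound $e^{kA}$ plays exactly the role of the transition-function bound $g^k$ in the paper, and the Schwarz-lemma/jet-map dimension count $m\binom{N-1+n}{n}\lesssim k^n$ is the same). The genuine gap is the step you yourself flag as ``the only substantive step'': you assert that pseudoconcavity of $\Omega$ plus the absence of a germ of a complex hypersurface in $b\Omega$ makes ``the continuity principle apply at every boundary point'' and hence yields a compact core $K$ with $\sup_\Omega|f|=\sup_K|f|$. That is precisely what has to be proved, and the Kontinuit\"atssatz does not do it: under the theorem's hypotheses the Levi form may degenerate completely (it can vanish to high or infinite order, on large subsets of $b\Omega$), so there are no strictly pseudoconcave directions along which to push analytic discs, and ``no complex hypersurface germ in $b\Omega$'' is not a condition the classical continuity principle knows how to use. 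The paper closes this gap with Trepreau's theorem: minimality of $b\Omega$ at a point $q$ gives a fundamental system of neighborhoods $V$ of $q$ and a one-sided open set $S$ with $\mathcal O(V)\to\mathcal O(V\cap S)$ surjective; one then argues that $S$ must lie in $\Omega$ (because the complement side is Levi pseudoconvex, hence locally a domain of holomorphy, so extension from that side is impossible), which shows every boundary point is an Andreotti pseudoconcave point, and only then does the Siegel--Andreotti hull/maximum-principle machinery become available. Without this (or the Bedford--Forn{\ae}ss result in the real-analytic case) your second paragraph is an appeal to a theorem that does not exist in the generality you need.

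A secondary point: your remark that if $b\Omega$ contained a hypersurface germ $V$ then a local equation $g$ of $V$ ``would give rise to unbounded holomorphic functions on $\Omega$ and the statement would already fail for $k=0$'' is not a valid argument as stated --- $g$ is only defined near a point of $V$, and $1/g$ need not extend to an element of $\mathcal O(\Omega)$, so this neither proves necessity of the hypothesis nor substitutes for its actual use, which is exactly to invoke Trepreau's minimality theorem at every boundary point.
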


Notice that the above theorems show that $h^q(\Omega, E^k)$ is insensitive
to the order of degeneracy of the Levi form of the boundary. This is related to the fact that the dimensions of cohomology groups (equivalently, the multiplicity of the zero eigenvalues of the $\dbar$-Neumann Laplacian) alone, even though can characterize pseudoconvexity (see \cite{Fu05a} for a discussion on related results),  are not sufficient to detect other geometric features, such as the finite type conditions, of the boundary. For this, we need to consider higher eigenvalues. Let $N_k(\lambda)$ be the number of eigenvalues that are less than or equal to $\lambda$ of the $\dbar$-Neumann Laplacian on $\Omega$ for $(0, 1)$-forms with values in $E^k$. The following is the main theorem of the paper:

\begin{theorem}\label{main-theorem}
Let $\Omega\subset\subset X$ be pseudoconvex domain with smooth boundary in a complex surface $X$. Let $E$ be a holomorphic line bundle over $\Omega$ that extends smoothly to $b\Omega$.
Then  for any $C>0$, $N_k(Ck)$ has at most polynomial growth as $k\to\infty$ if and only if $b\Omega$ is of finite type.
\end{theorem}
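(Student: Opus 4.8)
The plan is to prove the two directions separately, with the finite-type $\Rightarrow$ polynomial growth direction following from the results already assembled in the paper, and the converse being the substantive contribution. For the forward direction, suppose $b\Omega$ is of finite type. Then $\Omega$ is a smooth bounded pseudoconvex domain of finite type in a complex surface, so by \cite{Catlin89, FornaessSibony89} it satisfies the hypothesis of Theorem~\ref{th:maintheorem1}: there is a bounded continuous function on a neighborhood of $b\Omega$ with complex hessian bounded below by a positive constant on $U\cap\Omega$. Applying Theorem~\ref{th:maintheorem1} we obtain a defining function $r$ and $c_0>0$ so that $b\Omega_c$ is strictly pseudoconvex for $c\in(0,c_0)$, with the relevant cohomology stable in $c$. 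On the strictly pseudoconvex domain $\Omega_c$ one can run Berman's boundary holomorphic Morse inequalities \cite{Berman05} (or a direct Weyl-type count on the compact strictly pseudoconvex piece); the semiclassical/magnetic Schrödinger picture underlying Demailly--Berman asymptotics gives $N_k(Ck)=O(k^n)=O(k^2)$, which is polynomial. The point is that on a strictly pseudoconvex boundary the $\dbar$-Neumann Laplacian on $(0,1)$-forms has a spectral gap on the order of $k$ only up to a Weyl-law-sized cluster near zero, and the number in that cluster is controlled by the integral of the curvature form over the region where it is negative — hence $O(k^2)$.

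For the converse, I would argue by contraposition: assume $b\Omega$ is of infinite type and produce, for a suitable $C>0$, a sequence $k_j\to\infty$ with $N_{k_j}(Ck_j)$ growing faster than any polynomial. The mechanism is a localization near a point $p\in b\Omega$ of infinite type. At such a point one can, after a biholomorphic change of coordinates, arrange that $\Omega$ locally contains a family of large flat analytic discs, or more precisely that the boundary is extremely flat in a complex tangential direction. Using a cutoff and the standard comparison of the $\dbar$-Neumann Laplacian with its localization, it suffices to exhibit many linearly independent $(0,1)$-forms $u$ with values in $E^k$, supported near $p$, with $\|\dbar u\|^2+\|\dbarstar u\|^2\le Ck\,\|u\|^2$. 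These test forms should be built by the "holomorphic peak / coherent state" construction: take $u = \overline{\partial}(\chi \cdot s_k^{(a)})$ where $s_k^{(a)}$ is a family of approximate holomorphic sections of $E^k$ concentrated along the flat directions (concretely, Gaussian-type bumps $e^{-k\phi_a}$ adapted to a local weight), $a$ ranging over a lattice of $\sim k^{\alpha}$ points for every $\alpha$ when the type is infinite; the infinite-type flatness is exactly what lets the number of such essentially-orthogonal states beat every power of $k$ while keeping the energy quotient bounded by $Ck$. The energy estimate is where the defining function's flat Hessian enters: the curvature term that in the strictly pseudoconvex case forces energy $\gtrsim k$ per mode degenerates, so one gains many extra low-energy modes.

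The main obstacle is this converse construction — specifically, producing a genuinely super-polynomial number of near-eigenforms below $Ck$ with full control of their mutual (quasi-)orthogonality and of the error terms coming from the cutoff and from $E^k$ not being flat. Two technical reductions make this tractable. First, since $E$ is an honest holomorphic line bundle on a neighborhood of $\overline\Omega$, near $p$ we may trivialize it and absorb its hermitian metric into a smooth weight, reducing to the model weighted $\dbar$-problem on a piece of $\C^2$; the contribution of $E^k$'s curvature is a bounded perturbation that shifts the energies by $O(k)$, harmless for a bound of the form $Ck$ (at the cost of enlarging $C$). Second, infinite type at $p$ — via the D'Angelo/Catlin characterization, or just the failure of a subelliptic estimate — gives, for each $N$, a complex-analytic-disc or a scaling family along which the boundary vanishes to order $>N$; rescaling anisotropically (parabolically) turns this into a sequence of model domains becoming flat, and on each model the $\dbar$-Neumann Laplacian for $(0,1)$-forms has $\gtrsim k^{N}$ eigenvalues below a fixed multiple of $k$, by an explicit separation-of-variables computation (the tangential direction contributes a quantum-harmonic-oscillator-type operator whose potential is $|z|^{2N}$-flat, hence has eigenvalue spacing $\sim k^{1-2/(N+1)}\ll k$, so $\sim k^{N/(N+1)}\cdot k^{1/(N+1)}=k$ ... pushing $N\to\infty$ defeats polynomial growth). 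Making the passage from these model counts back to a genuine lower bound for $N_k(Ck)$ on $\Omega$ — i.e. the quantitative localization of the spectrum — is the part that requires care, and I expect it to occupy the bulk of the proof; the key tool is the min-max principle together with an IMS-type localization formula showing that the Rayleigh quotient on the model test forms differs from the true one by $O(1)$ in the relevant regime.
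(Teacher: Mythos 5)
Your forward direction (finite type $\Rightarrow$ polynomial growth) has a genuine gap: passing to the strictly pseudoconvex subdomain $\Omega_c$ via Theorem~\ref{th:maintheorem1} and then invoking Berman's Morse inequalities controls only the \emph{cohomology}, i.e.\ the multiplicity of the zero eigenvalue, not the counting function $N_k(Ck)$ of the $\dbar$-Neumann Laplacian on $\Omega$ itself. The isomorphisms $h^q(\Omega,E^k)=\widetilde h^q(\Omega_c,E^k)$ say nothing about the nonzero spectrum on $\Omega$, and the low-lying eigenvalues below $Ck$ are produced precisely by the weakly pseudoconvex boundary points of $\Omega$, which disappear when you retreat to $\Omega_c$. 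This is exactly the point of the remark preceding the main theorem: cohomology dimensions are insensitive to the order of degeneracy of the Levi form, and one must analyze higher eigenvalues. Moreover your claimed bound $N_k(Ck)=O(k^2)$ is false in general: for a domain of finite type $2m$ the correct upper bound established here is $N_k(Ck)\lesssim k^{1+m}$ (Proposition~\ref{prop:n-est}), and the expected sharp rate is of order $k^m$, which exceeds $k^2$ as soon as $m\ge 3$. So no argument confined to $\Omega_c$ can give the statement; what is actually needed is a boundary analysis on $\Omega$ itself. The paper does this through pointwise and integrated estimates of the spectral kernel $e_k(Ck;z,z)$: an interior estimate by anisotropic rescaling and G{\aa}rding's inequality (Proposition~\ref{prop:kernel-estimate}), and a boundary estimate (Proposition~\ref{prop:spectral-2}) resting on a uniform Kohn-type subelliptic estimate in rescaled coordinates, comparison with an auxiliary operator, and singular-value/trace bounds; integrating the trace over $\Omega$ then yields the polynomial bound.

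Your converse (infinite type $\Rightarrow$ superpolynomial growth by min-max with many quasi-orthogonal low-energy test forms localized at a point of high type) is in the same spirit as the paper's Proposition~\ref{prop:est-type}, but as written it is only a heuristic: the separation-of-variables/oscillator count on the model domain and the ``IMS localization'' step are not carried out, and the quantitative quasi-orthogonality of the coherent states is asserted rather than proved. The paper makes this precise with an explicit family $u_{j,l}$ built from a Lemari\'e--Meyer wavelet system in the $z_2$-variable and a bump in $z_1$, with $k=2^{4j}$, roughly $j^{-1}2^{mj-1}$ members, energy $Q^k_\Omega(u_{j,l},u_{j,l})\lesssim k$, and almost-orthogonality errors $O(j^{-1}2^{-mj})$, after which Lemma~\ref{lm:min-max} converts the count into the conclusion that $N_k(Ck)\le C'k^M$ forces the type to be at most $8M$. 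So the second half of your plan points in the right direction but still needs the concrete construction; the first half needs to be replaced, since its central reduction does not address the quantity being estimated.
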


The proof of the above theorem is a modification of the arguments
in \cite{Fu05}; we need only to establish here that effects of
the curvatures of the metrics on the complex surface $X$ and the
line bundle $E^k$ are negligible.

Our paper is organized as follows.
In Section~\ref{sec:prelim}, we review definitions and notations,
and provide necessary backgrounds. we prove Theorem~\ref{th:maintheorem1}  in Section~\ref{sec:equiv} and Theorem~\ref{th:maintheorem2} in Section~\ref{sec:siegel}.
The rest of the paper is devoted to the proof of Theorem~\ref{main-theorem}. For the reader's convenience, we have made an effort to have the paper self-contained. This results in including previously known arguments in the paper.

\section{Preliminaries}\label{sec:prelim}

\subsection{The $\dbar$-Neumann Laplacian}

We first review the well-known setup for the $\dbar$-Neumann
Laplacian on complex manifolds. (We refer the reader to
\cite{Hormander65, FollandKohn72, ChenShaw99} for extensive
treatises of the $\dbar$-Neumann problem and to \cite{Demailly}
for $L^2$-theory of the $\dbar$-operator on complex manifolds.)
Let $X$ be a complex manifold of dimension $n$. Let $E$ be a
holomorphic vector bundle of rank $r$ over $X$. Let
\[
C^\infty_{0, q}(X, E)=C^\infty(X, \Lambda^{0, q}T^*X\otimes E),
\quad C^\infty_{0, *}(X, E)=\oplus_{q=0}^n C^\infty_{0, q}(X, E).
\]
Let $\theta\colon E|_U\to U\times \C^r$ be a local holomorphic
trivialization of $E$ over $U$. Let $\eps_j$, $1\le j\le r$, be
the standard basis for $\C^r$ and let $e_j=\theta^{-1}(x,
\eps_j)$, $1\le j\le r$,  be the corresponding local holomorphic
frame of $E|_U$. For any $s\in C^\infty_{0, q}(U, E)$, we make the
identification
\[
s=\sum_{j=1}^r s_j\otimes e_j\simeq_\theta (s_1, \ldots, s_r),
\]
where $s_j\in C^\infty_{0, q}(U, \C)$, $1\le j\le r$. Then the
{\it canonical $(0, 1)$-connection} is by definition given by
\[
\dbar_q s\simeq_\theta(\dbar_q s_1, \ldots, \dbar_q s_r)\in
C^\infty_{0, q+1}(M, E),
\]
where $\dbar_q$ is the projection of the exterior differential
operator onto $C^\infty(X, \Lambda^{0, q}T^*X)$.

Now assume that $X$ is equipped with a hermitian metric $h$, given
in local holomorphic coordinates $(z_1, \ldots, z_n)$ by
\[
h=\sum_{j,k=1}^n h_{jk} dz_j\otimes d\bar z_k,
\]
where $(h_{jk})$ is a positive hermitian matrix. Let
$\Omega$ be a domain in $X$.
Let $E$ be a holomorphic vector bundle over $\Omega$ that extends
smoothly to $b\Omega$. Assume that $E$ is equipped with a smoothly
varying hermitian fiber metric $g$, given in a local holomorphic
frame $\{e_1, \ldots, e_r\}$ by $g_{jk}=\langle e_j, e_k\rangle$.
For $u, v\in \C^\infty_{0, *}(X, E)$, let $\langle u, v\rangle$ be the
point-wise inner product of $u$ and $v$, and let
\[
\langle\langle u, v\rangle\rangle_\Omega=\int_\Omega \langle u,
v\rangle dV
\]
be the inner product of $u$ and $v$ over $\Omega$.  Let $L^2_{0,
q}(\Omega, E)$ be the completion of the restriction of
$C^\infty_{0, q}(X, E)$ to $\Omega$ with respect to the inner
product $\langle\langle \cdot, \cdot\rangle\rangle_\Omega$.  We
also use $\dbar_q$ to denote the closure of $\dbar_q$ on
$L^2_{0, q}(\Omega, E)$. Thus $\dbar_q\colon L^2_{0, q}(\Omega,
E)\to L^2_{0, q+1}(\Omega, E)$ is a densely defined, closed
operator on Hilbert spaces. Let $\dbarstar_q$ be its Hilbert space
adjoint.

Let
\[
Q^E_{\Omega, q}(u, v)=\langle\langle\dbar_q u, \dbar_q
v\rangle\rangle_\Omega+\langle\langle\dbarstar_{q-1} u,
\dbarstar_{q-1} v\rangle\rangle_\Omega
\]
be the sesquilinear form on $L^2_{0, q}(\Omega, E)$ with domain
$\dom(Q^E_{\Omega, q})=\dom(\dbar_q)\cap \dom(\dbarstar_{q-1})$.
Then $Q^E_{\Omega, q}$ is densely defined and closed. It then
follows from general operator theory (see \cite{Davies95}) that $Q^E_{\Omega, q}$
uniquely determines a densely defined self-adjoint operator
$\square^E_{\Omega, q}\colon L^2_{0, q}(\Omega, E)\to L^2_{0,
q}(\Omega, E)$ such that
\[
Q^E_{\Omega, q}(u, v)=\langle\langle u, \square^E_{\Omega, q}
v\rangle\rangle, \qquad \text{for}\ u\in \dom(Q^E_{\Omega, q}),
v\in \dom(\square^E_{\Omega, q}),
\]
and $\dom((\square^E_{\Omega, q})^{1/2})=\dom(Q^E_{\Omega, q})$.
This operator $\square^E_{\Omega, q}$ is called {\it the
$\dbar$-Neumann Laplacian} on $L^2_{0, q}(\Omega, E)$.  It is an
elliptic operator with non-coercive boundary conditions. It
follows from the work of Kohn \cite{Kohn63, Kohn64, Kohn72} and
Catlin \cite{Catlin83, Catlin87} that it is subelliptic when
$\Omega$ is a relatively compact and smoothly bounded pseudoconvex domain
of finite type in the sense of D'Angelo \cite{Dangelo82, Dangelo93}:
There exists an $\eps\in (0, 1/2]$
such that
\[
\|u\|^2_{\eps}\le C(Q^E_{\Omega, q}(u, u)+\|u\|^2)
\]
for all $u\in \dom(Q^E_{\Omega, q})$, where $\|\cdot\|_\eps$
denotes the $L^2$-Sobolev norm of order $\eps$ on $\Omega$.

\subsection{The Dolbeault and $L^2$-cohomology groups} The
Dolbeault and the $L^2$-cohomology groups on $\Omega$ with values
in $E$ are given respectively by
\[
H_{0, q}(\Omega, E)=\frac{\{f\in C^\infty_{0, q}(\Omega, E)\mid
\dbar_q f=0\}}{\{\dbar_{q-1} g \mid g\in C^\infty_{0, q-1}(\Omega,
E)\}}
\]
and
\[\widetilde H_{0, q}(\Omega, E)=\frac{\{f\in L^2_{0,
q}(\Omega, E)\mid \dbar_q f=0\}}{\{\dbar_{q-1} g \mid g\in
\dom(\dbar_{q-1})\}}.
\]
It follows from general operator theory that $\widetilde H_{0,
q}(\Omega, E)$ is isomorphic to $\kernel(\square^E_{\Omega, q})$,
the null space of $\square^E_{\Omega, q}$, when $\dbar_{q-1}$ has
closed range. Furthermore, $\widetilde H_{0, q}(\Omega, E)$ is
finite dimensional when $\square^E_{\Omega, q}$ has compact
resolvent, in particular, when it is subelliptic.  It was shown by
H\"{o}rmander \cite{Hormander65} that when $b\Omega$ satisfies
conditions $a_q$ and $a_{q+1}$,
the $L^2$-cohomology group $\widetilde H_{0, q}(\Omega, E)$ is
isomorphic to the Dolbeault cohomology group $H_{0,
q}(\Omega, E)$.

\subsection{The spectral kernel} Assume that $\square^E_{\Omega, q}$
has compact resolvent. Let $\{\lambda^q_j; j=1, 2, \ldots\}$ be
its eigenvalues, arranged in increasing order and repeated
according to multiplicity. Let $\varphi^q_j$ be the corresponding
normalized eigenforms.  The {\it spectral resolution}
$\mathbf{E}^E_{\Omega, q}(\lambda)\colon L^2_{0, q}(\Omega, E)\to
L^2_{0, q}(\Omega, E)$ of $\square^q_\Omega$ is given by
\[
\mathbf{E}^E_{\Omega, q}(\lambda)u=\sum_{\lambda_j\le \lambda}
\langle\langle u, \varphi^q_j\rangle\rangle \varphi^q_j.
\]
Let $e^E_{\Omega, q}(\lambda; z, w)$ be the spectral kernel, i.e.,
the Schwarz kernel of $\mathbf{E}^E_{\Omega, q}(\lambda)$. Then
\[
\trace e^E_{\Omega, q}(\lambda; z, z)=\sum_{\lambda_j\le \lambda}
|\varphi_j^q(z)|^2.
\]
Let
\[
S^E_{\Omega, q}(\lambda; z)=\sup\{|\varphi(z)|^2 \mid \varphi\in
\mathbf{E}^E_{\Omega, q}(\lambda)(L^2_{0, q}(\Omega, E)), \|\varphi\|=1\}.
\]
It is easy to see that
\begin{equation}\label{eq:e-s-compare}
S^E_{\Omega, q}(\lambda; z)\le \trace e^E_{\Omega, q}(\lambda; z,
z)\le \frac{n!}{q!(n-q)!} S^E_{\Omega, q}(\lambda; z).
\end{equation}
(see, e.g., Lemma~2.1 in \cite{Berman04}.)

\section{Isomorphism between the Dolbeault and $L^2$
cohomology groups}\label{sec:equiv}

The following proposition is a simple variation of a result due to Diederich and Forn{\ae}ss \cite{DiederichFornaess77}.  We provide a proof, following Sibony (\cite{Sibony87, Sibony89}), for completeness.

\begin{proposition}\label{prop:d-f}  Let $\Omega\subset\subset X$
be pseudoconvex with smooth boundary. Assume that there exists a neighborhood $U$ of $b\Omega$ and a bounded continuous function whose complex hessian is bounded below by a positive constant. Then
there exists an $\eta\in (0, 1)$, a smooth defining function $\tilde r$ of $\Omega$, and a constant $C>0$ such that  $\rho=-(-\tilde r)^\eta$ satisfies
\begin{equation}\label{eq:d-f}
L_\rho(z, \xi)=\partial\dbar\rho(\xi, \bar\xi)\ge C|\rho(z)||\xi|^2_h
\end{equation}
for all  $z\in \Omega\cap U$ and $\xi\in T^{1, 0}_z(X)$.
\end{proposition}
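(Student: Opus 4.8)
The plan is to produce the Diederich--Forn\ae ss exponent by a bootstrapping argument starting from the given bounded function $\lambda$ with $\partial\dbar\lambda \ge c_0|\xi|^2_h$ on $U\cap\Omega$. First I would fix an ordinary smooth defining function $r_0$ for $\Omega$ (e.g.\ $r_0 = -\operatorname{distance}(\cdot,b\Omega)$ near $b\Omega$, smoothed and extended), shrinking $U$ so that $dr_0\neq 0$ on $U$ and the hypotheses of the proposition hold there. The key computation is for $\rho = -(-\tilde r)^\eta$ with $0<\eta<1$: writing $\rho = -e^{\eta\log(-\tilde r)}$ one gets
\[
\partial\dbar\rho = \eta(-\tilde r)^{\eta}\left(\frac{\partial\dbar(-\tilde r)}{-\tilde r} + (1-\eta)\,\frac{\partial(-\tilde r)\wedge\dbar(-\tilde r)}{(-\tilde r)^2}\right),
\]
so that, evaluated on $(\xi,\bar\xi)$,
\[
L_\rho(z,\xi) = \eta(-\tilde r)^{\eta-1}\Big(L_{-\tilde r}(z,\xi) + \frac{1-\eta}{-\tilde r}\,|\partial(-\tilde r)(\xi)|^2\Big).
\]
Thus it suffices to choose $\tilde r$ and $\eta$ so that the bracket is bounded below by a positive multiple of $|\xi|^2_h$ on $U\cap\Omega$; because $\eta(-\tilde r)^{\eta-1} = \eta(-\tilde r)^\eta/(-\tilde r) = |\eta|\,|\rho|/(-\tilde r)$ and $(-\tilde r)$ stays bounded, that inequality gives \eqref{eq:d-f}.

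The heart of the matter is constructing $\tilde r$ whose Levi form, although possibly very negative in some directions on $b\Omega$, is controlled by the complex-gradient term. Following Sibony, I would set $\tilde r = r_0\, e^{-M\lambda}$ (equivalently modify $r_0$ by the bounded factor $e^{-M\lambda}$) for a large constant $M$ to be chosen; since $\lambda$ is bounded, $\tilde r$ is a genuine defining function with $d\tilde r = e^{-M\lambda}dr_0$ on $b\Omega$, hence nonvanishing, and $-\tilde r$ is comparable to $-r_0$. Computing,
\[
\partial\dbar(-\tilde r) = e^{-M\lambda}\Big(\partial\dbar(-r_0) + M(-r_0)\,\partial\dbar\lambda + 2M\,\mathrm{Re}\,(\partial(-r_0)\wedge\dbar\lambda) + M^2(-r_0)\,\partial\lambda\wedge\dbar\lambda\Big),
\]
wait—this is the place where the crude estimate must be done carefully: on $U\cap\Omega$ the term $M(-r_0)\partial\dbar\lambda \ge M(-r_0)c_0|\xi|^2_h$ is positive and grows linearly in $M$; the cross term $2M\,\mathrm{Re}(\partial(-r_0)(\xi)\overline{\partial\lambda(\xi)})$ is, by Cauchy--Schwarz with a small parameter, absorbed into a fixed fraction of the $M(-r_0)c_0|\xi|^2_h$ term plus $C_\varepsilon M|\partial(-r_0)(\xi)|^2/(-r_0)$; the $M^2$ term is $\ge 0$; and $\partial\dbar(-r_0)$ is bounded, $\ge -C_1|\xi|^2_h$. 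Choosing $M$ large relative to $C_1$, $c_0$, and $\sup|\lambda|$ I can arrange
\[
L_{-\tilde r}(z,\xi) \ge \frac{A}{-\tilde r}\,|\partial(-\tilde r)(\xi)|^2 - \text{(a controlled error)},
\]
with $A$ as large as desired at the cost of enlarging $M$. Plugging into the boxed formula for $L_\rho$, the factor $(1-\eta)$ need only beat a fixed constant once $A$ (hence $M$) is large enough, so taking $\eta$ close to $1$ finishes the estimate.

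The main obstacle is the bookkeeping in the last display: one must show that the negative contributions in $L_{-\tilde r}$—both the fixed error from $\partial\dbar(-r_0)$ and the indefinite cross term linear in $M$—can be dominated simultaneously, for a \emph{single} choice of $M$, by the positive gradient-squared term with a coefficient large enough that, after multiplying through by $(-\tilde r)^{\eta-1}$ and using $1-\eta$ close to $0$, the resulting coefficient of $|\xi|^2_h$ in \eqref{eq:d-f} is still a fixed positive constant times $|\rho(z)|$. This is the standard Diederich--Forn\ae ss/Sibony juggling of three parameters ($M$, the Cauchy--Schwarz parameter $\varepsilon$, and $\eta$), and the only subtlety specific to the present setting is that $\lambda$ is merely continuous with a distributional complex hessian bounded below; one handles that either by a routine mollification of $\lambda$ on a slightly larger neighborhood (the bound $\partial\dbar\lambda\ge c_0$ in the sense of currents is preserved under convolution) or by working with $\max$-regularizations, after which all the computations above are valid for smooth data and the proposition follows.
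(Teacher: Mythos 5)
Your overall strategy (modify the defining function by a factor built from the bounded function with large complex hessian, then apply the chain rule to $-(-\tilde r)^\eta$) is the same one the paper uses, following Sibony. But there is a genuine gap in how you propose to close the estimate, and it is not the regularity of $\lambda$. You never invoke pseudoconvexity of $b\Omega$, and the argument cannot close without it. You propose to absorb the fixed negative error $\partial\dbar(-r_0)\ge -C_1|\xi|_h^2$ into the positive gradient term $\frac{1-\eta}{-\tilde r}|\partial(-\tilde r)(\xi)|^2$ (by making $M$, hence $A$, large). This fails: for $\xi$ in the kernel of $\partial\tilde r(z)$ — a codimension-one subspace of $T^{1,0}_z$ at every interior $z$, degenerating to the complex tangent space of $b\Omega$ as $z\to b\Omega$ — the gradient term vanishes identically while $|\xi|_h^2$ does not. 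The only other positive term available in those directions comes from $\partial\dbar\lambda$ and is of the form $M(-r_0)c_0|\xi|^2$, which tends to zero as $r_0\to 0$. So a fixed error $-C_1|\xi|^2$ cannot be dominated near the boundary by any choice of $M$ and $\eta$. The paper's proof gets around exactly this point by decomposing $\xi$ into complex tangential and normal parts and using pseudoconvexity to obtain the refined estimate $L_r(z,\xi)\ge -C_1\bigl(|r(z)||\xi|^2+|\xi|\,|\langle\partial r(z),\xi\rangle|\bigr)$; the $|r|$ in the tangential part is exactly what makes it absorbable by $M(-r_0)L_\lambda$, while the normal part is absorbable by the gradient term. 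This is the indispensable ingredient your argument omits.

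Two further issues, secondary but worth noting. First, the signs are off in a way that masks the difficulty. In the chain rule computation one obtains
\[
L_\rho(z,\xi)=\eta(-\tilde r)^{\eta-1}\Bigl(L_{\tilde r}(z,\xi)+\frac{1-\eta}{-\tilde r}\,|\partial\tilde r(\xi)|^2\Bigr),
\]
with $L_{\tilde r}$, not $L_{-\tilde r}$; and in the Leibniz expansion of $\partial\dbar(-\tilde r)$ with $\tilde r=r_0e^{-M\lambda}$ the term involving $\partial\dbar\lambda$ carries a minus sign, $-M(-r_0)\partial\dbar\lambda$, so your claim that the $M^2$-term is nonnegative and the $\partial\dbar\lambda$-term positive and growing linearly in $M$ rests on these flipped signs; once corrected, the $M^2$-term in $L_{\tilde r}$ is negative and must itself be absorbed (it can be, but only after the gradient term is expanded and the net coefficient is $-\eta M^2(-r_0)|\partial\lambda|^2$, which forces $\eta$ small relative to $M$ rather than $\eta$ close to $1$ as you assert). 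Second, the paper chooses $f=-e^g$ (after Richberg-smoothing and rescaling $g$ to $[-2,-1]$) precisely so that $-L_f-(1+A)|\partial f(\xi)|^2=e^g\bigl(L_g+(1-(1+A)e^g)|\partial g(\xi)|^2\bigr)$ is automatically bounded below by a positive constant; the linear exponent $-M\lambda$ does not have this self-absorbing structure and makes the parameter game more delicate than you acknowledge.
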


\begin{proof} Let $r$ be a defining function of $\Omega$. Let $V\subset\subset U$ be a tubular neighborhood of $b\Omega$ such that the projection from $z\in V$
onto the closest point $\pi(z)\in b\Omega$ is well-defined and smooth. Shrinking $V$ if necessary, we may assume that both $z$ and $\pi(z)$
are contained in the same coordinate patch. By decomposing $\xi\in T^{1, 0}_z(X)$ into complex tangential and normal components, we then obtain that
\begin{equation}\label{eq:levi-normal}
L_r(z, \xi)\ge -C_1(|r(z)||\xi|^2+|\xi||\langle \partial r(z), \xi\rangle|)
\end{equation}
for some constant $C_1>0$.

Write $\rho(z)=\varphi(r(z))e^{f(z)}$ where $\varphi$ is a smooth function on $(-\infty, 0)$ and $f(z)$ on $U$. Then it follows from direct calculations that
\begin{equation}\label{eq:levi-const}
L_\rho(z, \xi)=e^f\left(\varphi' L_r(z, \xi)+\varphi'' |\langle\partial r, \xi\rangle|^2+\varphi L_f(z, \xi)+\varphi |\langle \partial f, \xi\rangle|^2+2\varphi'\Re \langle\partial r, \xi\rangle\overline{\langle \partial f, \xi\rangle}\right).
\end{equation}
Let $\varphi(t)=-(-t)^\eta$. Let $A>0$ be a constant to be determined. Using the inequalities
\[
2|\xi||\langle \partial r, \xi\rangle|\le \frac{A|r|}{\eta}|\xi|^2+\frac{\eta}{A|r|}|\langle\partial r, \xi\rangle|^2
\]
and
\[
2|\langle \partial r, \xi\rangle\overline{\langle\partial f, \xi\rangle}|\le \frac{A|r|}{\eta}|\langle\partial f, \xi\rangle|^2+\frac{\eta}{A|r|}|\langle\partial r, \xi\rangle|^2,
\]
we then obtain from \eqref{eq:levi-normal} and \eqref{eq:levi-const} that
\begin{align}\label{eq:levi-final}
L_\rho(z, \xi)&\ge |\rho|\big( -C_1(\eta+\frac{1}{2}A)|\xi|^2+\frac{\eta}{|r|^2}(1-\eta-\frac{\eta}{A}
-\frac{C_1\eta}{2A})|\langle\partial r, \xi\rangle|^2 \notag\\
&\qquad\quad -L_f(z, \xi)-(1+A)|\langle\partial f, \xi\rangle|^2\big).
\end{align}

By Richberg's theorem, we may assume that there exists a bounded $g\in C^\infty(U\cap\Omega)$ such that $L_g(z, \xi)\ge C|\xi|^2$ for some $C>0$. By rescaling $g$, we may further assume that $-2\le g\le -1$. Let $f=-e^g$. Then on $V$,
\begin{equation}\label{eq:levi-p}
-L_f(z, \xi)-(1+A)|\langle \partial f, \xi\rangle|^2=e^g(L_g(z, \xi)+
(1-(1+A)e^g)|\langle\partial g, \xi\rangle|^2).
\end{equation}
Now choosing $A$ and then $\eta$ sufficiently small, we then
obtain \eqref{eq:d-f} on $V\cap\Omega$. We extend $\rho$ to
a strictly plurisubharmonic function on $U\cap\Omega$ by letting
$\widetilde\rho(z)=\theta (\rho)+\delta \chi (z)g(z)$ where $\theta$
is a smooth convex increasing function such that $\theta(t)=t$ for
$|t|\le\eps$ and is constant when $t<-2\eps$ for sufficiently small $\eps>0$, $\chi (z)\in C^\infty_c(\Omega)$ is identically 1 on a neighborhood of $\Omega\setminus V$, and $\delta$ is sufficiently small.
The desirable defining function is obtained by letting $\widetilde r=-(-\widetilde\rho)^{1/\eta}$.
\end{proof}

Let $c_0>0$ be any sufficiently small constant such that $\{\rho=-c^\eta_0\}\subset U\cap\Omega$ (following the notations of Proposition~\ref{prop:d-f}). Theorem~\ref{th:maintheorem1} is then a consequence of the combination of the above proposition and the results in Chapter III in \cite{Hormander65}\footnote{Although the results in \cite{Hormander65} is stated for only forms with values in the trivial line bundle, it is obvious that they also hold for forms with values in any holomorphic line bundle.}. More specifically, that $h^q(\Omega, E)=\widetilde h^q(\Omega_c, E)$, $1\le q\le n$, for any $c\in (0, c_0)$ follows from Theorem 3.4.9 in \cite{Hormander65}.
The proof of $\widetilde h^q(\Omega, E)=\widetilde h^q(\Omega_c, E)$ also
follows along the line of the proof of Theorem 3.4.9.  We provide details
as follows: Since $\Omega_c$ is strictly pseudoconvex, $\widetilde H_{0, q}(\Omega_c, E)$ is finite dimensional. To prove that the restriction
map $\widetilde H_{0, q}(\Omega, E)\to \widetilde H_{0, q}(\Omega_c, E)$ is onto, one needs only to show that the restriction of the nullspace $\scriptn(\dbar_q, \Omega)$ to $\Omega_c$ is dense in $\scriptn(\dbar_q, \Omega_c)$.  Let $\{c_j\}_{j=1}^\infty$ be an decreasing sequence of positive numbers approaching $0$ with $c_1=c$. Let $f=f_1\in\scriptn(\dbar_q, \Omega_{c_1})$ and let $\eps>0$. By applying Theorem~3.4.7 in \cite{Hormander65} inductively, we obtain $f_j\in \scriptn(\dbar_q, \Omega_{c_{j}})$ such that
\[
\|f_j-f_{j+1}\|_{\Omega_{c_{j}}}\le \frac{\eps}{2^{j}}.
\]
It follows that there exists some  $g\in\scriptn(\dbar_q, \Omega)$ such that for any $k$, $\|f_j-g\|_{\Omega_{c_k}}\to 0$ as $j\to\infty$, and $\|f-g\|_{\Omega_c}\le\eps$. Hence the restriction map is surjective.

To prove the injectivity of the restriction map, it suffices to prove that
for any $f\in L^2_{0, q}(\Omega, E)$ such that $\dbar_q f=0$ on $\Omega$ and $f=\dbar_{q-1} u$ on $\Omega_c$ for some $u\in L^2_{0, q}(\Omega_c, E)$, there exists a form $v\in L^2_{0, q-1}(\Omega, E)$ such that $f=\dbar_{q-1} v$
on $\Omega$. By Theorem~3.4.6 in \cite{Hormander65}, there exists $\tilde v_1\in L^2_{0, q-1}(\Omega_{c_2}, E)$ such that $f=\dbar_{q-1} \tilde v_1$ on $\Omega_{c_2}$. Since $\dbar_{q-1}(u-\tilde v_1)=0$ on $\Omega_{c_1}$, by Theorem~3.4.7 in \cite{Hormander65}, there exists $\hat v_1\in L^2_{0, q-1}(\Omega_{c_2}, E)$ such that $\dbar_{q-1}\hat v_1=0$ on $\Omega_{c_2}$ and $\|u-\tilde v_1-\hat v_1\|^2_{\Omega_{c_1}}<1/2$. Let $v_1=\tilde v_1+\hat v_1$.  Continuing this procedure inductively, we then obtain $v_j\in L^2_{0, q-1}(\Omega_{c_{j+1}}, E)$ such that $f=\dbar_{q-1} v_j$ on $\Omega_{c_{j+1}}$ and
\[
\|v_j-v_{j+1}\|_{\Omega_{c_{j+1}}}\le \frac{1}{2^j}.
\]
It then follows that there exists $v\in L^2_{0, q}(\Omega, E)$ such that for any $k$, $\|v_j-v\|_{\Omega_{c_k}}\to 0$. Hence $f=\dbar_{q-1} v$ on $\Omega$.

\bigskip

\noindent{\bf Remark}. Recall that a domain $\Omega\subset\subset X$ is said to satisfies property ($P$) in the sense of Catlin \cite{Catlin84b} if for any $M>0$, there exists a neighborhood $U$ of $b\Omega$ and a function $f\in C^\infty(\Omega\cap U)$ such that $|f|\le 1$ and
\[
L_f(z, \xi)\ge M |\xi|^2_h
\]
for all $z\in \Omega\cap U$ and $\xi\in T^{1,0}_z(X)$.

It is well-known that any relatively compact smoothly bounded pseudoconvex domain with finite type boundary in a complex surface satisfies property ($P$)(see Catlin \cite{Catlin89} and Fornaess-Sibony \cite{FornaessSibony89}\footnote{Both papers stated their results for domains in $\C^2$. The construction of Fornaess and Sibony can be easily seen to work on complex manifolds as well.  Catlin \cite{Catlin87} also constructed plurisubharmonic function with large complex hessian near the boundary for a smooth bounded pseudoconvex domain of finite type in $\C^n$. As a consequence, such domains satisfy property ($P$). His construction should also work for domains in complex manifolds as well.}).  Hence Theorem~\ref{th:maintheorem1} applies to these domains.

\section{Pseudoconcavity and the asymptotic estimates}\label{sec:siegel}
Let $\Omega\subset\subset X$ be a smoothly bounded domain in a complex manifold
$X$ of dimension $n$.
\begin{definition} A point $q\in b\Omega$ is an Andreotti pseudoconcave point
if there exists a fundamental system of \nbds $\{U\}$ such that $q$ is
an interior point of each of the sets
\[ (\widehat{U\cap \Omega})_U=\{p\in U|\ |f(p)|\leq \sup_{z\in U\cap \Omega}|f(z)|,
\forall f \in \mathcal{O}(U)\}.
\]
A domain $\Omega$ is Andreotti pseudoconcave if each of its boundary
points is.
\end{definition}

Note that this definition does not depend on the choice of the
fundamental system of neighborhoods.

We list some well known properties of Andreotti pseudoconcavity.
\begin{enumerate}
\item There are no relatively compact pseudoconcave domains in $\C^n$.
\item If the Levi form of $b\Omega$ has at each $q\in b\Omega$ at least
one negative eigenvalue, then $\Omega$ is (strictly) pseudoconcave.
\item If $X$ has a relatively compact pseudoconcave subdomain, then $\mathcal{O} (X)=\C$.
\item Each complex submanifold of $\C\P^n$ has a  pseudoconcave
neighborhood.
\end{enumerate}

Recall that a real hypersurface
$M$ in $X$
is minimal (in the sense of Trepreau) at a point $q$ if there does not exist the germ of complex
hypersurface passing through $q$ and contained in $M$.  We say an open
set is minimal at a boundary point if the boundary is minimal at that
point.

\begin{theorem} \label{Apoint} Let $q\in b\Omega$.  If the Levi form has no
positive eigenvalues in a \nbd of $q$ and $\Omega$ is minimal at $q$, then $q$
is an \Apc point.
\end{theorem}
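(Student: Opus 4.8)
The plan is to reduce the pseudoconcavity statement to a local extension result for holomorphic functions, using the hypothesis that the Levi form has no positive eigenvalues together with minimality. First I would fix a small neighborhood $U$ of $q$ and a defining function $r$ of $\Omega$ near $q$, and choose local holomorphic coordinates centered at $q$ so that $b\Omega$ is given by $r=0$ with $\Omega=\{r<0\}$. The key observation is that the hypothesis on the Levi form means that $\Omega$ is (weakly) pseudoconcave near $q$ in the sense that $-r$ is plurisuperharmonic along the complex tangential directions up to boundary error terms; equivalently, $M=b\Omega$ is ``$q$-pseudoconcave'' in the terminology of Andreotti--Hill. I would then invoke the Hans Lewy / Andreotti--Hill type extension theorem (or, at the level needed here, a Hartogs-type argument): because the Levi form has a negative eigenvalue direction missing the positive one, CR functions on $M$ near $q$ extend holomorphically to the pseudoconcave side, i.e.\ into a fixed neighborhood of $q$ inside $\widehat{U\cap\Omega}$.

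Second, I would handle the degenerate case — the real content of the theorem. When the Levi form is identically degenerate in some directions near $q$, the naive Hans Lewy extension fails, and this is exactly where minimality enters. The standard device is the Trepp\noindent The standard device is Trepreau's theorem (and its refinements by Tumanov): a smooth real hypersurface that is minimal at $q$ has the property that every continuous CR function near $q$ extends holomorphically to at least one side of $M$ at $q$; combined with the absence of positive Levi eigenvalues, the side to which extension occurs must be the concave side, namely into $\Omega$. Thus I would argue: (i) minimality at $q$ plus Trepreau gives one-sided holomorphic extension of CR functions; (ii) the sign condition on the Levi form forces that side to be $\Omega$, or more precisely forces the extension to fill out a full neighborhood of $q$ once one also uses the continuity principle (pushing in analytic discs attached to $M$ from the concave directions). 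Restricting holomorphic functions on $U$ to $b\Omega$ gives CR functions, so for every $f\in\mathcal O(U)$ we get $|f(q)|\le \sup_{U\cap\Omega}|f|$; applying this to a cofinal family of neighborhoods shrinking to $q$ shows $q$ lies in the interior of $(\widehat{U\cap\Omega})_U$ for each $U$ in a fundamental system, which is precisely the definition of an Andreotti pseudoconcave point.

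The main obstacle I anticipate is bridging the gap between ``CR functions extend to one side'' (a qualitative statement at a single point) and ``$q$ is interior to the hull $\widehat{U\cap\Omega}$ for a fundamental system of $U$'' (a statement requiring uniform, quantitative filling of a whole neighborhood, and stable under shrinking $U$). Trepreau's theorem by itself only asserts extension to a wedge with edge $M$ near $q$, not to a full ball; to upgrade this one needs the missing Levi direction to propagate the wedge across, turning the one-sided wedge into a two-sided (hence full) neighborhood — this is a continuity-principle / Hanges--Treves style propagation-of-extendability argument, and making it interact cleanly with the fundamental-system requirement is the delicate part. A cleaner alternative, which I would pursue if the propagation argument gets technical, is to bypass extension entirely: construct, for each neighborhood $U$ and each $f\in\mathcal O(U)$, a family of small analytic discs with boundaries in $U\cap\Omega$ whose interiors sweep out a fixed neighborhood of $q$ (possible because the Levi form being nonpositive lets one push discs attached near $b\Omega$ inward through $q$, while minimality guarantees the tangential directions are not obstructed by a complex hypersurface), and then apply the maximum principle on each disc to conclude $|f(q)|\le\sup_{U\cap\Omega}|f|$ directly.
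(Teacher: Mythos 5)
Your approach is the same as the paper's: both proofs rest on Trepreau's theorem plus the sign condition on the Levi form to determine the side of extension. In the paper, Trepreau is invoked in the form ``restriction $\mathcal O(V)\to\mathcal O(V\cap S)$ is surjective for a one-sided set $S$,'' and the observation that the Levi form has no positive eigenvalues (so that $\Omega^c\cap B(q,\eps)$ is pseudoconvex, hence a domain of holomorphy) forces $S\subset\Omega$, after which the hull inclusion follows. Your framing via CR extension and Tumanov/Trepreau is the same theorem in a different guise, so this part is fine.

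The genuine gap is in your second paragraph, in the step ``Restricting holomorphic functions on $U$ to $b\Omega$ gives CR functions, so for every $f\in\mathcal O(U)$ we get $|f(q)|\le\sup_{U\cap\Omega}|f|$.'' This is circular: if $f\in\mathcal O(U)$ then its CR boundary value already bounds back to $f$ itself, and the one-sided extension theorem applied to $f|_{b\Omega}$ returns $f$; you learn nothing about the relative sizes of $\sup_U|f|$ and $\sup_{U\cap\Omega}|f|$. The content of Andreotti pseudoconcavity is the \emph{constant-one} maximum inequality $|f(p)|\le\sup_{U\cap\Omega}|f|$ for $p$ near $q$, and a qualitative ``CR functions extend to a neighborhood'' statement does not by itself yield that. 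To close this gap you need either (a) the quantitative form hidden in the surjectivity statement — by Baire/closed-graph the extension $\mathcal O(V\cap S)\to\mathcal O(V')$ is bounded, $\sup_{V'}|Ef|\le C\sup_K|f|$ for some compact $K\subset V\cap S$, and then applying this to $f^n$ and letting $n\to\infty$ kills the constant $C$ — or (b) the analytic-disc argument you sketch as an ``alternative'': a family of discs with boundary in $V\cap\Omega$ whose interiors sweep a neighborhood of $q$, with the maximum modulus principle giving the inequality directly. Option (b) is not really an alternative: it is exactly how the extension in Trepreau's theorem is produced and how the maximum inequality is obtained, and it should be your main argument, not a fallback. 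Your first paragraph (Hans Lewy / Andreotti--Hill) is unnecessary, since in the degenerate situation you must fall back on Trepreau anyway, and the nondegenerate case is subsumed by it.
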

\begin{theorem}\label{basic}
 If $\Omega$ is Andreotti  pseudoconcave and relatively compact and if $E$ is a holomorphic line
 bundle over  $\overline{\Omega}$,
then there is some constant depending only on $\Omega$ and $E$ such that
\[
h^0(\Omega,E^k)\leq Ck^n,
\]
where as before $h^0(\Omega ,E^k)$ is the dimension of the space of global holomorphic sections over $\Omega$ with values in $E^k$.
\end{theorem}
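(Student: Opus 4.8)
The plan is to carry out Siegel's jet-counting argument, with the Andreotti pseudoconcavity of $\Omega$ taking the place of the compactness of $X$ in Siegel's theorem for compact manifolds. Fix once and for all a smooth Hermitian metric on $TX$ and one on $E$; these induce pointwise norms $|\cdot|_h$ on sections of $E^k$, and on any compact subset of a chart over which $E$ is holomorphically trivial the norm $|s|_h$ of a section and the modulus $|f|$ of its local representative are comparable up to a factor $C^k$, the power of $k$ coming from the metric distortion of $E^{\otimes k}$. Since only exponential-in-$k$ rates will matter at the end, I will use such comparisons freely.

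The essential input --- and the step I expect to be the main obstacle --- is a quantitative maximum-modulus estimate for pseudoconcave domains: there exist a compact set $K\Subset\Omega$ and a constant $A>1$, depending only on $\Omega$ and $E$, such that $\sup_{\overline\Omega}|s|_h\le A^k\sup_K|s|_h$ for every $k\ge 1$ and every $s\in H^0(\Omega,E^k)$ (in particular every such section is bounded on $\overline\Omega$). For the trivial bundle this is the classical maximum-modulus principle for Andreotti pseudoconcave domains (see \cite{A2}), proved by covering $b\Omega$ by finitely many coordinate balls $U_i$ centered at points $q_i\in b\Omega$, using that each $q_i$ lies in the interior of the hull $(\widehat{U_i\cap\Omega})_{U_i}$ by definition of Andreotti pseudoconcavity, and propagating control of $|s|$ toward the interior; I will adapt that proof to forms with values in $E^k$, the only new point being to track the $k$-dependence, which enters solely through the metric distortion of $E^{\otimes k}$ and its curvature and so costs only the factor $A^k$. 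That this step cannot be dispensed with is clear: for a general domain, e.g.\ a disc in $\P^1$, the space $H^0(\Omega,E^k)$ is infinite-dimensional and no such estimate can hold.

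Granting the estimate, the rest is routine. Enlarge $K$ so that it is connected and contains the closure of a coordinate ball $B\Subset B'\Subset\Omega$ centered at a point $x_0$, with $E$ trivial over $\overline{B'}$. For a positive integer $m$, the map sending $s\in H^0(\Omega,E^k)$ to its $m$-jet at $x_0$ has image of dimension at most $\binom{m+n}{n}$; hence if $h^0(\Omega,E^k)>\binom{m+n}{n}$ there is a nonzero $s\in H^0(\Omega,E^k)$ vanishing to order $\ge m+1$ at $x_0$ --- and the same reasoning applied to an arbitrary finite-dimensional subspace shows a posteriori that $h^0(\Omega,E^k)<\infty$. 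Normalize $\sup_K|s|_h=1$; then $\sup_{\overline\Omega}|s|_h\le A^k$, so the local representative $f$ of $s$ over $B'$ satisfies $\sup_{B'}|f|\le C_1^k$, and, $f$ vanishing to order $\ge m+1$ at $x_0$, the one-variable Schwarz lemma along complex lines through $x_0$ gives $\sup_B|f|\le C_1^k\theta^{m+1}$ with $\theta\in(0,1)$ the ratio of the radii of $B$ and $B'$; hence $\sup_B|s|_h\le C_2^k\theta^{m+1}$.

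Finally I would propagate this smallness from $B$ to $K$. Since $K$ is compact and connected it can be covered by a chain of fixed length $L$ of coordinate balls contained in a fixed compact subset of $\Omega$; applying the two-constants (Hadamard three-circle) inequality to the subharmonic functions $\log|f_j|$ along this chain, with $A^k$ serving as the uniform bound on $|s|_h$ at each step, yields $\sup_K|s|_h\le C_3^k\,\theta^{\,\beta^L(m+1)}$ for some fixed $\beta\in(0,1)$. Since $\sup_K|s|_h=1$, this forces $\beta^L(m+1)\log(1/\theta)\le k\log C_3$, that is, $m\le c\,k$ for a constant $c$ depending only on $\Omega$ and $E$. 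Consequently $h^0(\Omega,E^k)>\binom{m+n}{n}$ is possible only when $m\le ck$, and taking $m=\lceil ck\rceil+1$ gives $h^0(\Omega,E^k)\le\binom{\lceil ck\rceil+1+n}{n}\le Ck^n$, which is the asserted bound.
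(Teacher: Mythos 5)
Your proof is correct in outline and arrives at the right bound, but it is organized differently from the paper's proof, which follows Andreotti \cite{A1} closely.

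The paper's argument (Lemma~\ref{lm:Siegel}) works with a finite family of polydisc pairs $W_a=\phi_a^{-1}(P_{r_0})\subset V_a=\phi_a^{-1}(P_{r_1})$ covering $\overline\Omega$, with a jet point $x_a=\phi_a^{-1}(0)$ in \emph{each} $W_a$, and sets up a two-norm iteration $\|s\|_V\le g\|s\|_W\le q^h g\|s\|_V$: the first inequality uses the hull inclusion $V_a\subset\widehat{\Omega_{J(a)}\cap\Omega}$ coming from pseudoconcavity, the second is Schwarz applied in each polydisc at its own center. Because there is a jet point inside every $W_a$, no propagation step is required -- the Schwarz estimate holds simultaneously on each polydisc, and the iteration closes at once. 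The ``maximum-modulus'' consequence of pseudoconcavity is never isolated as a separate statement; it is encoded in the inequality $\|s\|_V\le g\|s\|_W$.

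You instead use a \emph{single} jet point $x_0$, extract an explicit quantitative maximum-modulus estimate $\sup_\Omega|s|_h\le A^k\sup_K|s|_h$ on a fixed compact $K\Subset\Omega$ (a form of Andreotti's classical maximum principle, with the $A^k$ coming from transition-function bounds for $E^{\otimes k}$), apply Schwarz once near $x_0$, and then propagate the resulting smallness across $K$ via a chain of balls and the two-constants (Hadamard three-circle) theorem. This is a legitimate variant -- it is essentially the ``one base point plus propagation'' version of Siegel's jet-counting argument that one finds in some treatments of the compact case. What it costs is the extra propagation step and the need to establish the compact-set maximum principle as a standalone lemma (which you correctly flag as the main input, and which is itself proved by a polydisc-covering argument very similar to the paper's). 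What the paper's formulation buys is a self-contained, one-pass iteration with no three-circle chaining: by putting a jet point in every covering polydisc, the Schwarz step already reaches every part of $\overline\Omega$. Both routes yield $h^0(\Omega,E^k)\le C\binom{n+\lceil ck\rceil}{n}\lesssim k^n$.

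Two minor points worth tightening in your write-up. First, the statement $\sup_{\overline\Omega}|s|_h\le A^k\sup_K|s|_h$ implicitly uses that sections over $\Omega$ extend (hence are bounded) across $b\Omega$; for Andreotti pseudoconcave $\Omega$ this is true (cf.\ Theorem~\ref{Apoint} and Trepreau's theorem), but it deserves a sentence. Second, your remark that the $k$-dependence in the maximum-modulus estimate enters through ``the metric distortion of $E^{\otimes k}$ and its curvature'' is slightly misleading: the argument is purely about moduli of holomorphic local representatives and transition functions, so only the transition-function bound $g^k$ (equivalently the metric comparison constant) enters, not the curvature.
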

The first theorem will follow directly from  some well-known results. When
$b\Omega$ is real analytic, this theorem is essentially contained
in \cite{BF}.  The second theorem is due to Siegel \cite{Siegel} and
Andreotti \cite{A1} and \cite{A2}.  We sketch the proof from
\cite{A1}, simplified to apply to manifolds rather than spaces.

We state  Trepreau's Theorem \cite{Trepreau}

\begin{theorem}\nonumber  
 If $b\Omega$ is minimal at the point $q$ then there exist a fundamental
system of
\nbds $\{V\}$ of $q$  and an open set $S
  $  lying on one side of $b\Omega$, with $b S\cap b\Omega$ an
open \nbd of $q$ in $b\Omega$, such that
\[
\mathcal{O} (V)\rightarrow \mathcal{O} (V\cap S )
\]
is surjective.
\end{theorem}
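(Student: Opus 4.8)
The statement is Trepreau's local holomorphic extension theorem; I would establish it by the method of analytic discs, though the analytic core is deep enough that in the paper we only cite \cite{Trepreau}. Fix local holomorphic coordinates $(z',z_n)$ with $z_n=x_n+iy_n$, centered at $q$, in which $M=b\Omega=\{y_n=\varphi(z',x_n)\}$, $\varphi(0)=0$, $d\varphi(0)=0$, and write $S^{\pm}=\{\pm(y_n-\varphi)>0\}$ for the two local sides. The plan has two parts: (A) attach analytic discs to $M$ and show that every CR function on $M$ near $q$ extends holomorphically to one fixed side, say $S^{+}$; (B) deduce from (A) the asserted surjectivity of the restriction map, with $S$ the opposite side $S^{-}$.

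For (A): solving Bishop's equation — a nonlinear singular integral equation on the circle built from the Hilbert transform and $\varphi$ — gives, by the implicit function theorem, a smooth finite-parameter family of small analytic discs $A_v\colon\overline{\D}\to\C^{n}$ with $A_v(b\D)\subset M$, degenerating to the constant disc at $q$. By the Baouendi--Treves approximation theorem every continuous CR function on $M$ near $q$ is a local uniform limit of holomorphic polynomials; each polynomial is entire, and the maximum principle promotes uniform convergence on $M$ to uniform convergence on every attached disc, so the limiting CR function extends holomorphically to $\bigcup_v A_v(\D)$. Holomorphic extendability of CR functions is thereby reduced to the purely geometric statement that a suitable union of attached discs, together with their iterates, contains a one-sided neighborhood of $q$.

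This geometric statement is exactly where the minimality hypothesis enters, and it is the step I expect to be the main obstacle. If $M$ contained the germ of a complex hypersurface $\Sigma$ through $q$, the maximum principle applied to a defining function of $\Sigma$ pulled back to a disc would force every analytic disc attached to $M$ and meeting $q$ to lie inside $\Sigma$, so no union of such discs could contain a neighborhood of $q$ on either side — and extendability indeed fails for a flat, hence non-minimal, hypersurface. When $M$ is minimal at $q$ one must show the opposite: starting from the first-order discs one iterates the construction, at each stage perturbing a disc so that its center moves in a direction newly made available by the second-order (Levi-form) term in Bishop's equation — the nonvanishing of the disc's ``defect'' — and one shows that minimality guarantees the iteration produces, in the limit, an open set lying on one side of $M$. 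Carrying this out quantitatively in $C^{\infty}$ regularity (Trepreau worked in $C^{2}$) and verifying openness and one-sidedness of the swept-out set is the analytic heart of the theorem, and is the content of \cite{Trepreau}.

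For (B): granting (A), the stated surjectivity with $S=S^{-}$ is a standard reduction. Given $g\in\mathcal O(V\cap S^{-})$, the holomorphic extension to $S^{+}$ of the CR boundary value of $g$ on $M$, furnished by (A), has the same boundary value as $g$ along $M$, so the two pieces glue, by the edge-of-the-wedge theorem, to a function holomorphic on a full neighborhood $V''$ of $q$; equivalently, Hartogs' continuity principle applies to a family of analytic discs with boundary circles in $V\cap S^{-}$ whose interiors, controlled near $M$ by the discs of (A), cover $V''$. Either way $\mathcal O(V'')\to\mathcal O(V''\cap S^{-})$ is onto; letting $V''$ shrink over a neighborhood basis at $q$ produces the fundamental system $\{V\}$, and with $S=S^{-}$ this is the asserted surjectivity of $\mathcal O(V)\to\mathcal O(V\cap S)$.
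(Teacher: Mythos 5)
The paper itself offers no proof of this statement: it is quoted as Trepreau's theorem and disposed of by the citation \cite{Trepreau}, so there is no internal argument to compare yours against. Your part (A) is a fair sketch of the known proof strategy (Bishop's equation and attached discs, Baouendi--Treves approximation, and the hard step --- that minimality forces the iterated discs to sweep out a one-sided neighborhood --- which you correctly flag as the analytic core and leave to the citation, exactly as the paper does). If the intent is merely to record why the result is plausible and where it comes from, that part is consistent with the paper's treatment.

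Part (B), however, contains a genuine gap. A function $g\in\mathcal O(V\cap S^{-})$ is only assumed holomorphic on the open one-sided set; it need not extend continuously to $M=b\Omega$, so ``the CR boundary value of $g$ on $M$'' need not exist, and the edge-of-the-wedge gluing cannot even be set up. The two routes you present as ``equivalent'' are therefore not: only the continuity-principle route addresses the statement actually being proved (surjectivity of $\mathcal O(V)\to\mathcal O(V\cap S)$ with no boundary regularity on $V\cap S$). To run that route you must work with discs whose \emph{entire boundaries} lie in $V\cap S^{-}$ --- for instance the discs of (A), whose boundaries lie on $M$, translated slightly toward $S^{-}$, with uniform control of their size and of how far their interiors cross $M$ as the parameters vary --- and you must connect them through a family to a disc contained in $V\cap S^{-}$ and check single-valuedness of the continuation before shrinking to obtain a fixed $S$ and a fundamental system $\{V\}$. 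The side bookkeeping also deserves care: (A) produces extension of CR functions into the side $S^{+}$ filled by the disc interiors, and it is functions holomorphic on the \emph{opposite} side $S^{-}$ that extend across (compare the strictly pseudoconvex model, where CR functions extend to the pseudoconvex side while holomorphic functions extend across only from the pseudoconcave side); your choice $S=S^{-}$ is the right one, but the boundary-value/edge-of-the-wedge phrasing hides both why that side is forced and why the argument cannot be made pointwise on $M$.
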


To prove Theorem \ref{Apoint}, we first show that the set $S$ from Trepreau's theorem lies in $\Omega$.  To see this, note that as long as $\epsilon$ is small
enough, $b\Omega\cap B(q,\epsilon )$ is Levi pseudoconvex, as part
 of the boundary of $\Omega ^c\cap B(q,\epsilon )$  and so $\Omega ^c\cap
B(q,\epsilon )$ is pseudoconvex and therefore a domain of holomorphy.
Thus for no neighborhood $V$ of $q$ is the map
\[
\mathcal{O} (V)\rightarrow \mathcal{O} (V\cap \Omega ^c )
\]
surjective.  So $S\subset \Omega $ and Trepreau's
theorem  asserts that each holomorphic function on $V\cap
\Omega $, for $V$ in the fundamental family of neighborhoods , extends
holomorphically to $V$.  It then follows that
\[
\sup_V |f|=\sup_{V\cap\Omega} |f|
\]
and so $q$  is a pseudoconcave point. We thus conclude the proof of Theorem~\ref{Apoint}.

Let ${\Omega}\subset\subset X$.  We assume that $\Omega$ is \Apc .  We
may extend the relevant transition functions of the bundle $E$
holomorphically across the boundary of $\Omega$ and so there is no
loss of generality in assuming that
$E$ is a holomorphic line bundle over an open \nbd of
$\overline{\Omega}$.

The following
lemma (see \cite{Siegel}) is fundamental to the arguments.

\begin{lemma}\label{lm:Siegel}
 There exist a finite number of points
  $x_a\in \Omega$, $a=1\ldots N$ and an integer $h$ such that the only
  section of $E$ which vanishes to at least order $h$ at each $x_a$ is
  the zero section.  Further, there exists a constant $C$ depending
  only on a fixed covering of $\Omega$ so that $h$ can be chosen to be any integer greater than
  $C\ln{g}$, where $g$ depends only on bounds for the
  transition functions for $E$.
\end{lemma}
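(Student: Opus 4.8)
The plan is to prove Lemma~\ref{lm:Siegel} by a dimension count, trading the (possibly infinite-dimensional, but we only care about sections bounded in a fixed way) space of sections of $E^k$ against the finite-dimensional space of jets. First I would fix, once and for all, a finite covering $\{W_b\}_{b=1}^{M}$ of $\overline\Omega$ by coordinate balls over which $E$ is holomorphically trivialized, together with a slightly smaller covering $\{W_b'\}$ still covering $\overline\Omega$ with $W_b'\subset\subset W_b$; this is the ``fixed covering'' referred to in the statement, and all constants below are allowed to depend on it. A holomorphic section $s$ of $E$ is represented on each $W_b$ by a holomorphic function $s_b$, with $s_a = g_{ab} s_b$ on overlaps, where the transition functions $g_{ab}$ are holomorphic and nonvanishing on $W_a\cap W_b$; let $g\ge 1$ be a common bound for $|g_{ab}|$ and $|g_{ab}^{-1}|$ on the (compact) overlaps of the $W_b'$.

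Next I would make precise what ``vanishing to order $h$ at the points $x_a$'' buys us. Choose the points $x_1,\dots,x_N\in\Omega$ so that every $W_b'$ contains at least one of them; then $N\le M$. Suppose $s$ is a holomorphic section of $E$ vanishing to order $\ge h$ at each $x_a$. Working in the trivialization over a $W_b$ containing some $x_a$, the function $s_b$ is holomorphic on $W_b$ and has a zero of order $\ge h$ at $x_a\in W_b'$; by the Schwarz-lemma / Cauchy-estimate mechanism on the concentric balls $W_b'\subset\subset W_b$ one gets a bound of the form
\[
\sup_{W_b'}|s_b| \le \kappa^{-h}\,\sup_{W_b}|s_b|
\]
for some fixed $\kappa>1$ depending only on the geometry of the covering. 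Now one wants to turn this \emph{local} decay into a \emph{global} one: using the transition functions one compares $\sup_{W_b}|s_b|$ to $\sup_{W_{b'}}|s_{b'}|$ for neighboring patches, losing a factor of $g$ at each step; since $\overline\Omega$ is connected and covered by finitely many $W_b'$, after at most $M$ chaining steps one propagates the decay factor $\kappa^{-h}$ (now weakened to $(\kappa/g^{M})^{-h}$, say, or rather $\kappa^{-h} g^{CM}$ after bookkeeping) from one patch to all of them. Choosing $h$ large enough that $\kappa^{-h}g^{C} < 1$ — i.e. $h > C\ln g$ for the appropriate fixed $C$ coming from $M$ and $\kappa$ — forces $\sup_{\overline\Omega}|s| < \sup_{\overline\Omega}|s|$ unless $s\equiv 0$. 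This is where Andreotti pseudoconcavity enters, and it is the one genuinely non-formal input: by the maximum-modulus characterization of pseudoconcave points (each $q\in b\Omega$ lies in the interior of $(\widehat{U\cap\Omega})_U$), the supremum of $|s|$ — measured via a fixed reference metric on $E$ over $\overline\Omega$ — is attained on a compact subset of $\Omega$, not merely on $\overline\Omega$; equivalently $|s|$ satisfies a maximum principle so that its sup over $\overline\Omega$ equals its sup over $\Omega$ and cannot strictly dominate itself. Combined with the strict contraction just derived, this yields $s\equiv0$.

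The main obstacle I anticipate is the chaining step: honestly controlling how the decay factor degrades as one passes from the patch containing a vanishing point to an arbitrary patch, keeping the total loss of the form $g^{C}$ with $C$ depending \emph{only} on the combinatorics of the fixed covering (number of patches, their overlaps, the ratio of radii) and not on $E$. Once that bookkeeping is done the rest is formal: $h$ can be taken to be any integer exceeding $C\ln g$, the number of points is $N\le M$, and both $N$ and $C$ depend only on the covering. I would also take care to note that since we may assume $E$ extends holomorphically to a neighborhood of $\overline\Omega$ (as remarked just before the lemma), there is no boundary-regularity issue in choosing the covering or in applying the maximum principle; the pseudoconcavity is used precisely to say that the relevant sup is an interior sup, which is what makes a single strict contraction lethal.
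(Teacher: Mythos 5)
The proposal has a genuine gap, and it sits exactly where Andreotti pseudoconcavity has to do its work. Your Schwarz-lemma step requires the local representative $s_b$ to be holomorphic on the \emph{large} ball $W_b$, and your comparison step requires every point of $W_b$ over which you take the sup to lie in some smaller set $W_{b'}'$. For patches meeting $b\Omega$ neither holds: $W_b\not\subset\Omega$, and a holomorphic section of $E$ over $\Omega$ does not extend to $W_b$ merely because the bundle does (and to the extent sections do extend past $b\Omega$ --- itself a pseudoconcavity phenomenon --- the neighborhood of $\overline\Omega$ depends on the section, so no covering fixed in advance can be assumed to lie inside the domain of holomorphy of $s$). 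Hence the contraction $\sup_{W_b'}|s_b|\le\kappa^{-h}\sup_{W_b}|s_b|$ is unavailable precisely on the boundary patches, and your chain of comparisons cannot be closed there. Relatedly, the statement you extract from pseudoconcavity --- that the sup of $|s|$ over $\overline\Omega$ is attained on a fixed compact subset of $\Omega$, an ``interior maximum principle'' --- is not what the definition gives and is not proved in your sketch: the hull condition bounds $|f(p)|$ for $p$ near $b\Omega$ by $\sup_{U\cap\Omega}|f|$ for $f\in\mathcal{O}(U)$, and $U\cap\Omega$ still clusters at the boundary, so no fixed interior core emerges; nor do you write down the inequality that would combine such an interior sup with the local contraction.

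What the paper (following Siegel--Andreotti) does instead is to build the pseudoconcavity into the covering itself. Besides polydisc charts $\phi_a\colon W_a\to P_{r_0}$ with concentric enlargements $V_a=\phi_a^{-1}(P_{r_1})$ --- on which the Schwarz lemma at the centers $x_a=\phi_a^{-1}(0)$ gives $\|s\|_W\le q^h\|s\|_V$ with $q=r_0/r_1$ --- one chooses larger trivializing patches $\Omega_{J(a)}$ (domains of holomorphy) with the key property $V_a\subset\widehat{\Omega_{J(a)}\cap\Omega}$. The hull inequality then bounds $\sup_{V_a}|s_{J(a)}|$, a sup over a set that sticks outside $\Omega$, by $\sup_{\Omega_{J(a)}\cap\Omega}|s_{J(a)}|$, and since the $W_b$ cover $\Omega$ a \emph{single} transition factor yields $\|s\|_V\le g\,\|s\|_W$; combining, $\|s\|_V\le q^h g\,\|s\|_V$, so $s\equiv0$ once $h>-\ln g/\ln q$, which is the asserted $h>C\ln g$. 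Note also that your chaining through $M$ overlaps is unnecessary (one comparison suffices because the small sets cover $\Omega$; it would only worsen the constant), and that to get a uniform contraction ratio the vanishing points should be the centers of the small polydiscs rather than arbitrary points of the shrunken patches. If you replace your ``interior maximum principle'' by the covering property $V_a\subset\widehat{\Omega_{J(a)}\cap\Omega}$ and apply the Schwarz lemma at the centers, your argument becomes the paper's proof.
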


\begin{proof}
Let \[
P_r=\{z\in \C ^n\  |\ \  |z_m|<r,\ \ 1\leq m\leq n\}
\]
be a polydisc.
There exist finite open coverings, $\{\Omega _k,\
k=1,\ldots , K\}$ and $\{ W_a,\  a=1,\ldots , N\}$, of $\overline{\Omega}$ with the following
properties.
\begin{enumerate}
\item Each $\Omega _k $ is diffeomorphic to the unit ball in $\C ^n$ and
  biholomorphic to a domain of holomorphy in $\C ^n$.  Note that any
  holomorphic
  line bundle over $\Omega _k$ is holomorphically trivial.
\item There is some real number $r_0$, $0<r_0<1$ such that for each $a$
  there exists a biholomorphism $\phi _a$ defined on an open \nbd  of
  $\overline{W_a}$ taking  $ W_a\to P_{r_0}$.  It follows that there
  is some real number $r_1$, not depending on $a$ and with
  $r_0<r_1<1$,
 for which $\phi
  _a^{-1}(P_{r_1})$ is defined.  Set
\[
V_a=\phi _a ^{-1} (P_{r_1}).
\]
\item There exists a map
\[ J:\{1,\ldots , N\}\to \{1,\ldots , K\}
\]
such that
\[
V_a
\subset \Omega _{J(a)}\cap\{\widehat{\Omega _{J(a)}\cap \Omega}\}.
\]
\end{enumerate}

Choose a nowhere zero holomorphic section $\sigma _j: \Omega_j\to
E|_{\Omega_j}$.  Define
\[
g_{jk}:\Omega _j\cap \Omega _k\to C
\]
by
\[
 \sigma _j=g_{jk}(x) \sigma_k.
\]
Let
\[
g=\sup_{j,k}\sup_{x\in \Omega _{j}\cap \Omega _{k}}|g_{jk}(x)|.
\]
Note that $1\leq g<\infty $.

Set $\Omega_0=\bigcup V_a$.
For each section $s:\Omega_0\to E$ we introduce the notation
\[ s|_{V_a}=s_{J(a)}\sigma _{J(a)}
\]
and define
\begin{eqnarray*}
||s||_V&=\  \sup_{a,V_{a}}|s_{J(a)}|\\
||s||_W&=\  \sup_{a,W_{a}}|s_{J(a)}|
\end{eqnarray*}
Note that
\[
s_{J(b)}=s_{J(a)}g_{J(a)J(b)} .
\]
Since
\[
V_{J(a)}\subset\widehat {(\Omega _{J(a)}\cap \Omega)}
\]
we have
\begin{equation}\label{xx}
\sup_{V_J(a)}|s_{J(a)}|\leq \sup_{\Omega _J(a)\cap \Omega}|s_{J(a)}|.
\end{equation}
Let $x\in \Omega _{J(a)}\cap \Omega$.  Then, because $\{W_{b}\}$ is a covering
of $\Omega$, there is some $b$ for which $x\in W_{b}$.  So
\[
|s_{J(a)}(x)|=|g_{J(a)J(b)}(x)s_{J(b)}(x)|\leq g\ |s_{J(b)}(x)|.
\]
Combining this with (\ref{xx}), we obtain
\[
||s||_V\leq g\ ||s||_W.
\]
Note how the pseudoconcavity was used to derive this inequality.
Now we need a good bound for $||s||_W $ in terms of $||s||_V$.  This
is the main point in the proof.  Let
\[
x_a=\phi _a^{-1}(0).
\]
We now make use of
the hypothesis that $s$ vanishes to order $h$ at
each $x_a$.  Recall the following version of the Schwarz Lemma \cite{Siegel}.

\begin{lemma} If $F(z) $ is holomorphic on $P_{r_1}$ and vanishes to
  order at least $h$ at the origin, then
\[
\sup_{z\in P_{r_0}}|F(z)|\leq (\frac {r_0}{r_1})^h \sup_{z\in
  P_{r_1}}|F(z)|.
\]
\end{lemma}
Let $q=r_0/r_1$.  Applying the Schwarz Lemma to $W_{a}\subset
V_{a}$ for each $a$, we obtain
\[
||s||_W\leq q^h||s||_V.
\]
Note that $q$ satisfies $0<q<1$ and that $q$ only depends on the
choice of $\{W_a\}$ and $\{\phi _a\}$.

We now have
\[
||s||_V\leq g ||s||_W\leq q^hg ||s||_V
\]
and so $s\equiv 0$ provided we take $h$ to be an integer satisfying
\begin{equation}
h>-\frac {\ln{ g}}{\ln{q}}.\label{h}
\end{equation}
(Recall $g\geq 1$ and $0<q<1$.)
\end{proof}
It is now easy to see how the Fundamental Lemma implies Theorem \ref{basic}.  Let $\Gamma
(\overline{\Omega},E)$ be the complex vector space of holomorphic sections of
$E$ over some \nbd of $\Omega$.  The \nbd is allowed to depend on the
section.  Let $J(x_a)$ be the space of jets up to order $h$ at $x_a$ of
holomorphic sections of $E$.  Consider the map
\[
\Gamma
(\overline{\Omega},E)\to \oplus _{a=1}^N J(x_a).
\]
The Fundamental Lemma tells us that this map is injective.  The
dimension of each $J(x_a)$ is $
\left(\begin{array}{c}
n+h\\
h
\end{array}\right)$.

  Thus
\[
h^0(\Omega,E)\leq N\left(\begin{array}{c}
n+h\\
h
\end{array}\right).
\]

We want to estimate the right hand side when $h$ is large.  We do this
using Sterling's asymptotic formula:
\[
m!\approx \sqrt{2\pi m}\frac {m^m}{e^m}.
\]
So, as $h\to\infty$
\begin{eqnarray*}
\left(\begin{array}{c}
n+h\\
h
\end{array}\right)=
\frac {(n+h)!}{h!n!} & \approx & \frac
      {\sqrt{1+\frac{n}{h}}(n+h)^{n+h}}{e^nh^hn!}\\
& \approx & \frac {(n+h)^n}{n!}\\
& \approx & \frac {h^n}{n!}.
\end{eqnarray*}
Now we replace $E$ by $E^k$.  This means that the transition functions
are replaced by their
$k^{th}$ powers and so $g$ becomes $g^k$.  Then $h$ is replaced by
$c'k$ for some $c'$ depending on $E$ and so
\begin{equation}
 h^0(\Omega,E^k)<C\left(\begin{array}{c}
n+c'k\\
c'k
\end{array}\right)<C'k^n.\label{h00}
\end{equation}

This completes the proof, based on \cite{A1}, of Theorem \ref{basic}.

We will need some minor modifications of Lemma~\ref{lm:Siegel}.
First we replace $E^k$ by a bundle of the form $L^k\otimes F^s$.  The
transition functions for $L^k\otimes F^s$ are of the form
$g_{ij}^kf_{ij}^s$ and so are
bounded by $C^{k+s}$ for some $C$.  So
in the proof of Theorem \ref{basic} the inequality (\ref{h}) is
replaced by
\begin{equation}
h>\frac {-\ln C^{k+s}} {\ln q}\label{hnew}
\end{equation}
and inequality (\ref{h00}) is replaced by
\begin{equation}
 h^0(\Omega,L^k\otimes F^s)<C\left(\begin{array}{c}
n+c'(k+s)\\
c'(k+s)
\end{array}\right)<C'(k+s)^n.
\end{equation}

Next we assume that local holomorphic coordinates $\zeta _1,\ldots , \zeta _n$
are specified in a \nbd of each $x_a$ and that the only sections we
consider are those that in a \nbd of $x_a$ are holomorphic functions
of only $\zeta _1,\ldots , \zeta _m$ for some $m\leq n$.  Denote the
space of such sections by $\Gamma _0(\overline{\Omega },E)$.  We have
restricted the set of sections so of course it still follows that a
section vanishing to at least order $h$ at each $x_a$ must be
identically zero.  At each $x_a$ there are $\left(\begin{array}{c}
m+h\\
h
\end{array}\right)$ polynomials in $m$ variables of degree less than or
equal to $h$.  So
\[
dim\ \Gamma _0(\overline{\Omega },E)\leq N\left(\begin{array}{c}
m+h\\
h
\end{array}\right)
\]
and as before
\[
dim\ \Gamma _0(\overline{\Omega },E^k)\leq C\left(\begin{array}{c}
m+c'k\\
c'k
\end{array}\right)\leq C'k^m.
\]
And finally, we combine these two modifications.
\begin{equation}
dim\ \Gamma _0(\overline{\Omega },L^k\otimes F^s)\label{gamma0}
\leq C'(k+s)^m.
\end{equation}

We conclude with an application from \cite{Siegel}.
Let $X$ be a compact complex manifold with $dim\ X =n$. (Or, more
generally, let $X$ contain a relatively compact Andreotti
pseudoconcave subset, see \cite{A1}.  In particular, the results will
apply to any $X$ containing a set $\Omega$ as in Theorem \ref{Apoint}) .
Recall that
meromorphic functions on a complex manifold $X$ are analytically dependent if
\[
df_1\wedge\ldots\wedge df_{m}= 0
\]
at each point of $X$ at which the functions are all holomorphic.  And
they are algebraically dependent if there is a nontrivial polynomial
$P$ over $C$ with
\[
P(f_1,\ldots ,f_{m})= 0
\]
at all such points.  It is easy to see that algebraic dependence
implies analytic dependence.  Here is the converse.
\begin{theorem}\label{Siegel}
 If the meromorphic functions $f_1,\dots ,f_m$ on $X$ are analytically
dependent, then they are also algebraically dependent.
\end{theorem}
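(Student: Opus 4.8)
The plan is to run the classical Siegel–Andreotti counting argument, using the dimension bounds established above for spaces of sections of high tensor powers that are additionally required to be ``flat'' in the degenerate directions. First I would reduce to the case where $X$ contains a relatively compact Andreotti pseudoconcave open set $\Omega$ on which the bounds \eqref{gamma0} apply; when $X$ is compact this is automatic, and the general hypothesis is precisely that such $\Omega$ exists. Next, after reordering, I would assume $m$ is maximal with the property that $f_1,\dots,f_m$ are analytically independent on a dense open set; the hypothesis is that $df_1\wedge\cdots\wedge df_m\equiv 0$, so $m$ is strictly less than the number of functions, and it suffices to produce a single nontrivial polynomial relation among $f_1,\dots,f_m,f_{m+1}$ — that is, I may as well assume we have $f_1,\dots,f_m$ analytically independent together with one extra function $g$, and I must find $P$ with $P(f_1,\dots,f_m,g)\equiv 0$.

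The key construction is the following. Since each $f_j$ and $g$ is meromorphic on a neighborhood of $\overline\Omega$, I can write $f_j = a_j/b_j$ and $g = c/d$ with the $a_j,b_j,c,d$ holomorphic sections of some fixed holomorphic line bundle $F$ over a neighborhood of $\overline\Omega$ (clearing denominators over the finitely many coordinate patches of the covering used in Lemma~\ref{lm:Siegel}). Fix a base point $x_1$ among the $x_a$ and local coordinates there; because $f_1,\dots,f_m$ are analytically independent, after shrinking I may take them as part of a coordinate system near $x_1$, so a monomial $f_1^{\alpha_1}\cdots f_m^{\alpha_m}g^\beta$ with $\alpha_1+\cdots+\alpha_m+\beta\le k$, cleared to a section of $F^{\otimes k}$ (or rather $L^k\otimes F^s$ with $L$ trivial, so \eqref{gamma0} applies), is a holomorphic section depending near each $x_a$ only on the $m$ ``free'' variables. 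For a candidate relation $P(f_1,\dots,f_m,g)=\sum_{|\alpha|+\beta\le k} c_{\alpha\beta}\,f^\alpha g^\beta$ of degree $\le k$, clearing denominators gives a section $s_P$ of $L^k\otimes F^s$ (with $s$ linear in $k$) lying in the space $\Gamma_0(\overline\Omega, L^k\otimes F^s)$. Requiring $s_P$ to vanish to order $h$ at each of the finitely many points $x_a$ is, by the jet-dimension count, at most $N\binom{m+h}{h}\le C h^m \le C' k^m$ linear conditions on the coefficients $c_{\alpha\beta}$; by Lemma~\ref{lm:Siegel} (in the $\Gamma_0$ form, with $h$ allowed to grow like $c'k$), any $s_P$ meeting all these vanishing conditions is identically zero, i.e.\ $P(f_1,\dots,f_m,g)\equiv 0$. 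The number of unknown coefficients $c_{\alpha\beta}$ is $\binom{m+1+k}{k}$, which grows like $k^{m+1}/(m+1)!$ — one power of $k$ faster than the number of constraints $C'k^m$. Hence for $k$ large the linear system is underdetermined and admits a nontrivial solution $(c_{\alpha\beta})$, giving a nontrivial polynomial $P$ with $P(f_1,\dots,f_m,g)\equiv 0$.

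Finally I would check that the resulting $P$ is genuinely nontrivial as a polynomial: the coefficients $c_{\alpha\beta}$ are not all zero by construction, and since $f_1,\dots,f_m$ are analytically (hence algebraically) independent and $g$ is a true function, the map from coefficient vectors to the section $s_P$ is injective on the relevant space, so $s_P\equiv 0$ forces the identity $P(f_1,\dots,f_m,g)=0$ as meromorphic functions on $X$ (the relation extends off $\Omega$ by analytic continuation, $X$ being connected). The main obstacle — and the place where pseudoconcavity does real work — is bookkeeping the line bundle $F$ and the exponent $s$ so that the counting stays uniform: one must ensure that clearing denominators costs only an amount of ``twist'' that is linear in $k$ (so \eqref{gamma0} with $L$ trivial and $s = O(k)$ applies and still yields an $O(k^m)$ bound, not $O(k^{m+1})$), and that the finitely many points $x_a$ and the covering can be chosen once and for all independently of $k$. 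Granting the modified Lemma~\ref{lm:Siegel} exactly as stated above, this is routine; the genuine content is the strict inequality $m+1 > m$ between the growth rates of unknowns and equations, which is what forces the relation to exist.
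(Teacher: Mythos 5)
Your argument is correct and is in substance the same Siegel--Andreotti scheme as the paper's proof of Theorem~\ref{Siegel}: clear denominators so that polynomial expressions in the functions become holomorphic sections of a line bundle whose twist grows linearly in the degree, use the analytic dependence (plus the choice of the points $x_a$ where $df_1\wedge\cdots\wedge df_m\neq 0$ and all denominators are nonvanishing) to place these sections in the restricted space $\Gamma_0$, and then force a nonzero polynomial to vanish to order $h\sim c'k$ at every $x_a$, hence to vanish identically by the modified Lemma~\ref{lm:Siegel} and the bound \eqref{gamma0}. The genuine difference is how the dimension count is organized: you use the one-parameter space of polynomials of total degree $\le k$ in $m+1$ variables, with $\binom{k+m+1}{m+1}\sim k^{m+1}$ unknowns against $O(k^m)$ jet conditions (valid since the twist, and hence $h$ via \eqref{hnew}, is $O(k)$), so a single limit $k\to\infty$ suffices; the paper instead uses the bihomogeneous space $W(r,s)$ of dimension $(r+1)^m(s+1)$ against $C'(mr+s)^m$, choosing $s$ large first to beat the constant of order $m^m$ and then letting $r\to\infty$, cf.\ \eqref{s+1}. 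Your cruder count is simpler and fully sufficient for the statement as given; the paper's two-parameter count is the classical refinement which keeps the degree in the last function bounded independently of $r$, and it is this extra uniformity that feeds the remark preceding the proof that $K(X)$ is an algebraic extension of a rational function field. One sentence of yours should be repaired: the claim that ``the map from coefficient vectors to the section $s_P$ is injective'' is backwards --- the construction works precisely because this map, composed with evaluation of $h$-jets at the $x_a$, has nontrivial kernel. No injectivity is needed: a nonzero coefficient vector already gives a nonzero polynomial $P$, and $s_P\equiv 0$ yields $P(f_1,\dots,f_m,g)\equiv 0$ as a meromorphic identity because $s_P$ equals this function multiplied by a power of the denominator sections, which is nonvanishing on a dense open set; the identity then propagates to all of $X$ by analytic continuation, exactly as you say.
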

This implies that the field
 of meromorphic functions on $X$ is an algebraic
extension of the field of rational functions in $d$ variables, with
$d\leq n$.  Thus
\[
K(X)\cong Q(t_1,\ldots ,t_d,\theta),\ \theta \mbox{
algebraic in } t_1,\ldots ,t_d.
\]
We first relate memomorphic functions to line bundles.   Given a
meromorphic function $f$, we may find a finite covering
\[
X=\bigcup U_j
\]
and holomorphic functions over $U_j$ such that
\[
 f=\frac {p_j}{q_j} \mbox{ on }U_j,\mbox{  and  } \frac
{p_j}{q_j}=\frac {p_k}{q_k} \mbox{ on } U_j\cap U_k.
\]
Let $L$ be the line bundle with transition functions
\[
g_{jk}=\frac {q_j}{q_k}\in \mathcal{O}^*(U_j\cap U_k).
\]
Then
\[
p=\{p_j\} \mbox{ and } q=\{q_j\}
\]
are global sections of $L$ and $f=p/q$ is a
global quotient.

We are now ready to prove Theorem \ref{Siegel}.  We change notation
and start with analytically independent meromorphic functions
$f_1,\ldots ,f_m$ and a meromorphic function $f$ with
\[
df_1\wedge\ldots\wedge df_{m}= 0
\]
at each point where this makes sense.  We need to find a
polynomial such that $P(f_1,\ldots f_m,f)=0$ at each point where the
functions are all holomorphic.  Let $L_j$ be the bundle associated to
$f_j$ and let $F$ be the bundle associated to $f$.  Then each
 $f_j$ is a global quotient of sections of
\begin{equation*}
L=L_1\otimes L_2\otimes \ldots \otimes
L_m.\label{L}
\end{equation*}
Further, $f_1^{k_1}\cdots f_m^{k_m}f^s$
is a global quotient of $L^k\otimes F^s$ where
$k=k_1+\ldots +k_m$.

We write
\begin{eqnarray*}
f_j &=& \frac {s_j} {s_0} \quad\quad s_j,s_0 \mbox{ global
sections of } L,\\
f &=& \frac {\phi} {\psi } \quad\quad \phi ,\psi \mbox{ global
sections of } F.
\end{eqnarray*}
We fix some positive integers $r$ and $s$.  Let
\begin{eqnarray*}
W_0(r,s) & = & \{\mbox{polynomials of degree at most } r\\
 &   & \mbox{ in each of }
X_1,\ldots , X_m\\
 &   &  \mbox{ and degree at most } s \mbox{ in }
X_{m+1}\}
\end{eqnarray*}
We want to eliminate the denominators
in our global quotients and also to work only with homogeneous polynomials.  So let
\[
W(r,s)=\{Q(\xi , \eta , X_1,\ldots
,X_m,X_{m+1})=  \xi ^{mr}\eta^sP(\frac {X_1}{\xi},\ldots ,\frac
{X_m}{\xi},\frac {X_{m+1}} {\eta}), P\in
W_0(r,s)\}.
\]
Thus $Q\in W(r,s)$ is homogeneous in the sense that
\[
Q(a\xi , b\eta , aX_1,\ldots ,aX_m,bX_{m+1})
=
a^{mr}b^sQ(\xi , \eta , X_1,\ldots
,X_m,X_{m+1}).
\]
We may assume that at the points $x_a$ in the proof of Theorem \ref{basic}
\[
df_1\wedge\ldots\wedge df_{m}\neq 0.
\]
So these functions define a partial set of local coordinates which we
use to define $\Gamma _0(\overline{\Omega}, L^{mr}\otimes F^s)$.

Next define
\[
\Pi :W(r,s)\to \Gamma _0(X, L^{mr}\otimes F^s)
\]
by
\[
\Pi Q = Q(s_0,\psi,s_1,\ldots ,s_m,\phi ).
\]
It suffices to prove that
$\Pi$ is not injective.    The modifications of Theorem \ref{basic}
apply as long as (\ref{hnew}) holds.  Thus (\ref{gamma0}) holds with
$k$ replaced by $mr$:
\[
dim\ \Gamma _0(X, L^{mr}\otimes F^s) \leq C'(mr+s)^m.
\]
It is easy to see that
\[
dim\ W(r,s)=(r+1)^m(s+1).
\]
So if $r$ and $s$ can be chosen such that
\[
(r+1)^m(s+1)>C'(mr+s)^m
\]
then $\Pi$ is not injective.
We write this inequality as
\begin{equation}\label{s+1}
s+1>C\frac {(m+\frac s r)^m }{(1+\frac 1 r )^m}.
\end{equation}
We first choose $s$ so that
\[
s+1>2Cm^m
\]
and then choose $r$ large enough to guarantee (\ref{s+1}).

\section{Hearing the finite type condition in two dimensions}

\subsection{The finite type condition}\label{sec:finite-type}
Hereafter, we will assume that $X$ is a complex surface and
$\Omega$ is a relatively compact domain with smooth boundary in
$X$. The boundary $b\Omega$ is said to be of finite type (in the
sense of D'Angelo \cite{Dangelo82}) if the normalized order of contact of any
analytic variety with $b\Omega$ is finite. The highest order of
contact is the type of the domain.

Assume that $X$ is equipped with a hermitian metric $h$. Let
$r(z)$ be the signed geodesic distance from $z$ to $b\Omega$ such
that $r<0$ on $\Omega$ and $r>0$ outside of $\Omega$. Then $r$ is
smooth on a neighborhood $U$ of $\Omega$ and $|dr|_h=1$ on $U$.
Let $z'\in b\Omega$ and let $L$ be a normalized
$(1, 0)$-vector field in a neighborhood of $z'$ such that $Lr=0$.
For any integers $j, k\ge 1$, let
\[
\scriptl_{jk}\partial\dbar r(z')=\underbrace{L\ldots L}_{
\text{$j-1$ times}}\;
\underbrace{\ov{L}\ldots\ov{L}}_{\text{$k-1$
times}}\partial\dbar r(L, \ov{L})(z'),
\]
Let $m$ be any positive integer. For any $2\le l\le 2m$, let
\begin{equation}\label{al-def}
A_l(z')=\Big(\sum_{\substack{j+k\le l \\ j, k>0}}
|\scriptl_{jk}\partial\dbar r(z')|^2\Big)^{1/2}.
\end{equation}
For any $\tau>0$, let
\begin{equation}\label{delta-def}
\delta(z', \tau)=\sum_{l=2}^{2m} A_l(z')\tau^l.
\end{equation}
It is easy to see that
\begin{equation}\label{delta-comp}
\delta(z', \tau)\lesssim \tau^2\quad\text{and}\quad
c^{2m}\delta(z', \tau)\le \delta(z', c\tau)\le c^2\delta(z',
\tau),
\end{equation}
for any $\tau$ and $c$ such that $0<\tau, c<1$. Furthermore,
$b\Omega$ is of finite type $2m$ if and only if $\delta(z',
\tau)\gtrsim \tau^{2m}$ uniformly for all $z'\in b\Omega$ and
$\delta(z'_0, \tau)\lesssim \tau^{2m}$ for some $z'_0\in b\Omega$.
(Here and throughout the paper,  $f\lesssim g$ means that $f\le
Cg$ for some positive constant $C$. It should be clear from the
context which parameters the constant $C$ is independent of. For
example, the constant in \eqref{delta-comp} is understood to be
independent of $z'$ and $\tau$.)

Let $z^0$ be a fixed boundary point and let $V$ be a neighborhood
of $z^0$ such that its closure is contained in a coordinate patch.
Let $m$ be any positive integer. It follows from Proposition 1.1
in \cite{FornaessSibony89} that for any $z'\in V\cap b\Omega$,
after a possible shrinking of $V$, there exists a neighborhood
$U_{z'}$ of $z'$ and local holomorphic coordinates
$(z_1, z_2)$ centered at $z'$ and depending smoothly on $z'$ such
that in these coordinates
$$
U_{z'}\cap\Omega=\{z\in U_{z'} \mid \rho(z)=\Re z_2+\psi(z_1, \Im
z_2)<0\},
$$
where  $\psi(z_1, \Im z_2)$ has the form of

\begin{equation}\label{defining-1}
\psi(z_1, \Im z_2)= P(z_1)+ (\Im z_2) Q(z_1) +O\big(|z_1|^{2m+1} +|\Im
z_2||z_1|^{m+1}+|\Im z_2|^2|z_1|\big)
\end{equation}
with
\[
P(z_1)=\sum_{l=2}^{2m} \sum_{\substack{j+k=l
\\j, k>0}}  {a}_{jk}(z')z_1^j\bar z_1^k
\quad \text{and}\quad Q(z_1)=\sum_{l=2}^{m} \sum_{\substack{j+k=l
\\j, k>0}}  {b}_{jk}(z')z_1^j\bar z_1^k
\]
being polynomials without harmonic terms.  Furthermore, there exist
positive constants $C_1$ and $C_2$, independent of $z'$, such that
\[
C_1A_l(z')\le \Big(\sum_{\substack{j+k\le l\\ j, k>0}} |
a_{jk}(z')|^2\Big)^{1/2} \le C_2 A_l(z')
\]
for $2\le l\le 2m$.

The above properties hold without the pseudoconvex or finite type
assumption on $\Omega$. Under the assumption that $b\Omega$ is pseudoconvex of
finite type $2m$, it then follows from Proposition 1.6 in
\cite{FornaessSibony89} that for all $0<\tau<1$,
\begin{equation}\label{FS-2}
\sum_{l=2}^m {B}_l(z')\tau^l\lesssim \tau (\delta(z',
\tau))^{1/2},
\end{equation}
where
\[
{B}_l(z')=\Big(\sum_{\substack{j+k\le l\\ j, k>0}} |
b_{jk}(z')|^2\Big)^{1/2}.
\]

The anisotropic bidisc $R_\tau(z')$ is given in the $(z_1,
z_2)$-coordinates by
\begin{equation}\label{eq:anisotropic}
R_\tau(z')=\{|z_1|<\tau, |z_2|<\delta(z', \tau)^{1/2}\}.
\end{equation}
We refer the reader to \cite{Catlin89, Mcneal89, NRSW89, Fu05} and
references therein for a discussion of these and other anisotropic
``balls". It was shown in \cite{Fu05} (see Lemmas 3.2 and 3.3
therein) that the anisotropic bidiscs $R_\tau(z')$ satisfy the
following doubling and engulfing properties: There exists a
positive constant $C$, independent of $z'$, such that if $z''\in
R_\tau(z')\cap b\Omega$, then $C^{-1}\delta(z', \tau)\le
\delta(z'', \tau)\le C\delta(z', \tau)$, $R_\tau(z')\subset
R_{C\tau}(z'')$, and $R_{\tau}(z'')\subset R_{C\tau}(z')$.

\subsection{Interior estimates}\label{sec:interior} Let $E$ be a
holomorphic line bundle over $\Omega$ that extends smoothly to the
boundary $b\Omega$. Let $e_k(\lambda; z, w)$ be the spectral
kernel of the $\dbar$-Neumann Laplacian on $(0, 1)$-forms on
$\Omega$ with values in $E^k$. Let $\pi\colon U\to b\Omega$ be the
projection onto the boundary such that $|r(z)|=\distance(z,
\pi(z))$. Shrinking $U$ if necessary, we have $\pi\in
C^\infty(U)$. Write $\tau_k=1/\sqrt{k}$.

\begin{proposition}\label{prop:kernel-estimate}
For any $C, c>0$,
\begin{equation}\label{eq:kernel-estimate}
\trace e_k(Ck; z, z)\lesssim  k(\delta(\pi(z), \tau_k))^{-1},
\end{equation}
for all sufficiently large $k$ and all $z\in \Omega$ with $d(z)\ge c
(\delta(\pi(z), \tau_k))^{1/2}$.
\end{proposition}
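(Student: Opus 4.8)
The plan is to reduce the estimate on $\trace e_k(Ck;z,z)$ at an interior point $z$ to a purely interior (i.e.\ boundaryless) spectral estimate on a fixed small ball, and then transplant it by a scaling that turns the $\dbar$-Neumann Laplacian on $E^k$-valued forms into a model operator on $\C^2$. First I would invoke \eqref{eq:e-s-compare} to replace $\trace e_k(Ck;z,z)$ by $S^{E^k}_{\Omega,1}(Ck;z)$ up to a dimensional constant; thus it suffices to bound $|\varphi(z)|^2$ for a normalized $(0,1)$-form $\varphi$ with values in $E^k$ lying in the spectral subspace $\mathbf E^{E^k}_{\Omega,1}(Ck)(L^2_{0,1})$, i.e.\ satisfying $Q^{E^k}_{\Omega,1}(\varphi,\varphi)\le Ck\|\varphi\|^2$. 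Since $z$ is an interior point with $d(z)\ge c(\delta(\pi(z),\tau_k))^{1/2}$, the anisotropic bidisc $R_{\tau_k}(\pi(z))$ — or rather a fixed dilate of it — sits at a definite (anisotropic) distance from the boundary, so on a neighborhood comparable to $R_{\tau_k}(\pi(z))$ we may use interior elliptic estimates for $\square$: there the boundary conditions play no role and $\square^{E^k}_{\Omega,1}$ agrees with the (componentwise, in a local holomorphic frame of $E$) complex Laplacian plus lower-order curvature terms coming from the metric $h$ on $X$ and the fiber metric $g$ on $E$.

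Next I would carry out the anisotropic rescaling from \cite{Fu05}. Using the Fornaess--Sibony normal coordinates \eqref{defining-1} at $z'=\pi(z)$ and the dilation $z_1\mapsto \tau_k z_1$, $z_2\mapsto \delta(z',\tau_k)^{1/2}z_2$, the bidisc $R_{\tau_k}(z')$ becomes a unit-size bidisc, and — this is the content of the sentence in the excerpt, ``we need only to establish here that effects of the curvatures of the metrics on $X$ and $E^k$ are negligible'' — one checks that in the rescaled coordinates the metric $h$ becomes $\delta_{jk}+O(\tau_k)$, the connection form of the fiber metric $g$ on $E^k$ (which, being the $k$-th power, carries a factor $k$, i.e.\ $\sim k\,\partial\bar\partial\log g$) contributes curvature of size $k\cdot(\text{bidisc volume})\sim k\cdot\tau_k^2\sim 1$ in the appropriate anisotropic units, hence is bounded, while the scalar curvature of $h$ contributes $O(\tau_k^2)\to 0$. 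After dividing the quadratic form by $k$, the condition $Q^{E^k}_{\Omega,1}(\varphi,\varphi)\le Ck\|\varphi\|^2$ becomes a uniform bound $\widetilde Q(\widetilde\varphi,\widetilde\varphi)\le C'\|\widetilde\varphi\|^2$ for a model quadratic form with coefficients converging (in $C^\infty$ on the unit bidisc) to those of a fixed limiting operator with bounded coefficients. Interior sub-mean-value / Garding estimates for this family then give $|\widetilde\varphi(0)|^2\lesssim \|\widetilde\varphi\|^2_{L^2(\text{unit bidisc})}$ uniformly in $k$ and $z'$.

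Finally I would undo the scaling: the $L^2$ norm on the unit bidisc equals $(\tau_k^2\,\delta(z',\tau_k))^{-1}$ times the $L^2$ norm of $\varphi$ over $R_{\tau_k}(z')$, which is $\le(\tau_k^2\,\delta(z',\tau_k))^{-1}\|\varphi\|^2=(\tau_k^2\,\delta(z',\tau_k))^{-1}=k\,\delta(z',\tau_k)^{-1}$ since $\tau_k^{-2}=k$ and $\|\varphi\|=1$; and $|\widetilde\varphi(0)|^2=|\varphi(z)|^2$ up to the bundle frame, which is bounded since $E$ extends smoothly to $b\Omega$. Taking the supremum over such $\varphi$ and applying \eqref{eq:e-s-compare} gives \eqref{eq:kernel-estimate}. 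The main obstacle is the curvature bookkeeping in the middle step: one must verify that after the anisotropic dilation and division by $k$, the zeroth-order potential term $\sim k\,|\partial\bar\partial\log g|$ coming from the high tensor power $E^k$ is genuinely $O(1)$ rather than growing — this works precisely because the anisotropic bidisc has volume $\sim\tau_k^2\delta\lesssim\tau_k^4=k^{-2}$, so $k$ times a quantity of size (bidisc linear scale)$^2$ is bounded — and that the interior regularity constants for the rescaled operator family are uniform, which follows from the $C^\infty$-convergence of the coefficients to a fixed limiting model and the doubling/engulfing properties of $R_\tau(z')$ recalled at the end of \S\ref{sec:finite-type}.
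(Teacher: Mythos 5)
Your overall strategy is the same as the paper's: reduce via \eqref{eq:e-s-compare} to a sup bound for unit-norm forms in the range of the spectral projection ${\bf E}_k(Ck)$, pass to the Fornaess--Sibony normal form \eqref{defining-1} at $z'=\pi(z)$, rescale anisotropically, observe that $|h_{jl}-\delta_{jl}|\lesssim\tau_k$ and that the rescaled weight satisfies $k|\varphi|\lesssim k\tau_k^2=1$ so the base metric and the curvature of $E^k$ are harmless, and undo the scaling to produce the factor $(\tau_k^2\delta(\pi(z),\tau_k))^{-1}=k(\delta(\pi(z),\tau_k))^{-1}$. The gap is in your key middle step: from the rescaled quadratic-form bound $\widetilde Q(\widetilde\varphi,\widetilde\varphi)\le C'\|\widetilde\varphi\|^2$ alone one cannot conclude $|\widetilde\varphi(0)|^2\lesssim\|\widetilde\varphi\|^2_{L^2}$. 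That bound controls only one derivative in $L^2$, and in complex dimension two (real dimension four) $H^1$ does not embed into $L^\infty$; nor is there a sub-mean-value inequality for a general element of a spectral subspace known only through its form bound. What makes the pointwise bound work --- and what the paper actually uses --- is that $v\in{\bf E}_k(Ck)(L^2_{0,1}(\Omega,E^k))$ satisfies $\|(\square^k_\Omega)^M v\|\le (Ck)^M$ for every $M$ (see \eqref{eq:pr1}); since the rescaled Laplacian is $\square^{(k)}=\tau_k^2\,\mathcal F_k\square^k_\Omega\mathcal F_k^{-1}$, this gives $\|(\square^{(k)})^M u_k\|\lesssim 1$ uniformly, and the uniform interior ellipticity (your G{\aa}rding step, the paper's Lemma~\ref{lemma:garding1}) is then iterated to a uniform $H^{2M}$ estimate on a smaller polydisc as in \eqref{eq:garding2}, after which Sobolev embedding with $2M>2$ yields the pointwise bound. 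Without invoking powers of the Laplacian (or an equivalent bootstrap), the step as you wrote it fails.

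Two smaller points. Your slogan ``$k$ times the bidisc volume $\sim k\tau_k^2$'' is off ($\tau_k^2$ is the square of the largest linear scale, not the volume $\sim\tau_k^2\delta_k$); the computation you actually need, $k\,|\varphi|\lesssim k\tau_k^2=1$ for $\varphi=O(|z|^2)$ on the bidisc, is correct, but note it is only valid for points within distance $\sim\tau_k$ of $\pi(z)$. Consequently your single rescaling centered at $\pi(z)$ only covers the shell $c(\delta(\pi(z),\tau_k))^{1/2}\le d(z)\lesssim\tau_k$; points with $d(z)\gtrsim\tau_k$ must be handled separately by the classical interior estimate $\trace e_k(Ck;z,z)\lesssim k^2$ as in \eqref{eq:garding}, which suffices because $\delta(\pi(z),\tau_k)\lesssim\tau_k^2=1/k$. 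The paper makes this split explicitly and, in the shell, dilates the normal variable by $\sigma_k\in[c\delta_k^{1/2},\tau_k]$ adapted to $d(z)$ so that the rescaled point is always $(0,-1)$; your fixed dilation by $\delta_k^{1/2}$ would also work there, but the uniformity of the interior constants as the rescaled point moves off to distance $d(z)/\delta_k^{1/2}$ would then need to be checked.
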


It is a consequence of a classical result of G\"{a}rding
\cite{Garding53} that for any compact subset $K$ of $\Omega$,
\begin{equation}\label{eq:garding}
\trace e_k(Ck; z, z)\lesssim k^2, \qquad \text{for}\ z\in K.
\end{equation}
Evidently, the constant in the above estimate depends on $K$. (See Theorem 3.2 in \cite{Berman04} for a more general and precise
version of this result.) In fact, this is also true for any
$z\in\Omega$ with $d(z)\ge 1/\sqrt{k}$. (Compare estimate (2.3) in
\cite{Metivier81}, Theorem 3.2 in \cite{Berman04}, and Proposition
5.7 in \cite{Berman05}.) Therefore, it suffices to establish
\eqref{eq:kernel-estimate} on $\{z\in\Omega, c(\delta(\pi(z),
\tau_k))^{1/2}\le |r(z)|\le \tau_k\}$. This also follows from the
elliptic theory, via an anisotropic rescaling. We provide details
below.

Let $z'\in b\Omega$. Following the discussion in
Section~\ref{sec:finite-type}, we can choose holomorphic
coordinates centered and orthonormal at $z'$ such that in a
neighborhood $U_{z'}$ of $z'$, $b\Omega$ is defined by $\rho(z_1,
z_2)=\Re z_2+\psi(z_1, \Im z_2)$ where $\psi(z_1, \Im z_2)$ is in the
form of \eqref{defining-1}.  Assume that the hermitian metric is
given on $U_{z'}$ by
\begin{equation}\label{eq:base-metric}
h=\frac{i}{2}\sum_{j, l=1}^2 h_{jl}(z) dz_j\wedge d\bar z_l, \quad
\text{with}\quad h_{jl}(0)=\delta_{jl}
\end{equation}
and the fiber metric on $E$ is given by
 \begin{equation}\label{eq:fiber-metric}
 \quad |e(z)|^2=e^{-\varphi(z)} \qquad \text{with}\quad \varphi
 (z)=\sum_{j, l=1}^2 a_{jl}z_j\bar z_l +O(|z|^3),
 \end{equation}
where $e(z)$ is an appropriate holomorphic frame of $E$ over
$U_{z'}$.

Write $\Omega_{z'}=\Omega\cap U_{z'}$. Let
$\omega'_2=\partial\rho$ and $\omega'_1=\rho_{\bar z_2}d
z_1-\rho_{\bar z_1}d z_2$. Let $\omega_2$ and $\omega_1$ be the
orthonormal basis for $(1, 0)$-forms on $\Omega_{z'}$ obtained by
applying the Gram-Schmidt process to $\omega'_2$ and $\omega'_1$.
Let $L_2$ and $L_1$ be the dual basis for $T^{1, 0}(\Omega_{z'})$.

Write $\delta_k=\delta(z', \tau_k)$ and let $c>0$. For any
$\sigma_k$ such that $\tau_k \ge \sigma_k\ge c\delta^{1/2}_k$, we
define the anisotropic dilation $(z_1, z_2)=F_k(\zeta_1,
\zeta_2)=(\tau_k\zeta_1, \ \sigma_k \zeta_2)$. Let
$U^k_{z'}=F^{-1}_k(U_{z'})$ and
$\Omega^k_{z'}=F^{-1}_k(\Omega_{z'})$. On $\Omega^k_{z'}$, we use
the base metric given by
\[
h^{(k)}=\frac{i}{2}\sum_{j, l=1}^2 h_{jl}(\tau_k\zeta_1,
\sigma_k\zeta_2) d\zeta_j\wedge d\bar \zeta_l
\]
and on $E^{(k)}=F_{k*}(E^k)$ we use the fiber metric given by the
weight function
\[
\varphi^{(k)}(\zeta)=k \varphi(\tau_k\zeta_1, \sigma_k\zeta_2).
\]
Note that $\Omega^k_{z'}=\{(\zeta_1, \zeta_2)\in U^k_{z'} \mid
\rho_k(\zeta_1, \zeta_2)<0\}$, where $\rho_k(\zeta_1,
\zeta_2)=(1/\sigma_k)\rho(\tau_k z_1, \sigma_k z_2)$. Let
$\omega^k_1$ and $\omega^k_2$ be the orthonormal basis for $(1,
0)$-forms on $\Omega^k_{z'}$ obtained as in the proceeding
paragraph but with $\rho$ replaced by $\rho_k$ and $(z_1, z_2)$
replaced by $(\zeta_1, \zeta_2)$ respectively. Let $L^k_1$ and
$L^k_2$ be the dual basis for $T^{1, 0}(\Omega^k_{z'})$. We define
$\scriptf_k\colon L^2(\Omega_{z'}, E^k)\to L^2(\Omega^k_{z'},
E^{(k)})$ by

\[ \scriptf_k(v)(\zeta_1,
\zeta_2)=(\tau_k\sigma_k)v(\tau_k\zeta_1, \sigma_k\zeta_2)
\]
and extend $\scriptf_k$ to act on forms by acting componentwise as
follows:
\[
\scriptf_k(v_1\ov{\omega}_1+v_2\ov{\omega}_2)=
\scriptf_k(v_1)\ov{\omega}^k_1+ \scriptf_k(v_2)\ov{\omega}^k_2,
\quad \scriptf_k(
v\ov{\omega}_1\wedge\ov{\omega_2})=\scriptf_k(v)\ov{\omega}^k_1
\wedge\ov{\omega}^k_2.
\]
(Hereafter, we identify a form with values in $E^k$ with its representation
in the given local holomorphic trivialization.) It is easy to see
that $\scriptf_k$ is isometric on $L^2$-spaces with respect to
specified metrics:
\[
\|u\|^2_{h, k\varphi}=\|\scriptf_k u\|^2_{h^{(k)}, \varphi^{(k)}},
\]
where $\|\cdot\|_{h, k\varphi}$ denotes the $L^2$-norm with respect
to the base metric $h$ and the fiber metric $k\varphi$ and likewise
$\|\cdot\|_{h^{(k)}, \varphi^{(k)}}$ the $L^2$-norm with respect to
the base metric $h^{(k)}$ and the fiber metric $\varphi^{(k)}$. Let
\[
Q^{(k)}(u, v)=\tau^2_k Q^k_\Omega(\scriptf^{-1}_k u, \scriptf^{-1}_k
v)
\]
with $\dom(Q^{(k)})=\{v \mid \scriptf_k^{-1} v\in \dom(Q^k_\Omega)
\text{ and } \support \scriptf_k^{-1} v\subset U_{z'}\} $, where
$Q^k_\Omega=Q^{E^k}_{\Omega, 1}$ is the sesquilinear form
associated with the $\dbar$-Neumann Laplacian
$\square^k_\Omega=\square^{E^k}_{\Omega, 1}$ on $(0, 1)$-forms on
$\Omega$ with values in $E^k$. Let $\square^{(k)}$ be the
self-adjoint operator associated with $Q^{(k)}$.

Let $P'=\{\zeta\in U^{k}_{z'} \mid |\zeta_1|<1/2,
|\zeta_2+1|<1/2\}$. It is easy to see that for sufficiently large
$k>0$, $P'$ is a relatively compact subset of $\Omega^{k}_{z'}$.

\begin{lemma}\label{lemma:garding1} Let $u\in \dom(Q^{(k)})
\cap C^\infty_c(P')$. Then
\begin{equation}\label{eq:garding1}
\|u\|^2_{1, \Omega^{k}_{z'}}\lesssim Q^{(k)}(u,
u)+\|u\|^2_{h^{(k)}, \varphi^{(k)}},
\end{equation}
where $\|\cdot\|_{1, \Omega^{k}_{z'}}$ is the $L^2$-Sobolev norm
of order $1$ on $\Omega^k_{z'}$.
\end{lemma}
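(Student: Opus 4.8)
The plan is to prove \eqref{eq:garding1} as a G\"{a}rding‑type inequality for the rescaled form $Q^{(k)}$, with all constants independent of $k$. Since $u$ is supported in the fixed compact set $P'$, which for large $k$ lies compactly inside $\Omega^k_{z'}$, the $\dbar$‑Neumann boundary conditions play no role: unwinding the definition, $Q^{(k)}(u,u)=\|\dbar_k u\|^2_{h^{(k)},\varphi^{(k)}}+\|\dbarstar_k u\|^2_{h^{(k)},\varphi^{(k)}}$, where $\dbar_k$ is the $\dbar$‑operator of $\Omega^k_{z'}$ written in the anisotropically dilated coordinates (so that, relative to the coordinate frame, the $\zeta_1$‑derivatives appear with coefficient $1$ and the $\zeta_2$‑derivatives with coefficient $\tau_k/\sigma_k\ge 1$) and $\dbarstar_k$ is its pointwise adjoint with respect to the base metric $h^{(k)}$ and the fiber metric $\varphi^{(k)}$. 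Equivalently, $\square^{(k)}$ is, on the interior, a second order elliptic operator on $(0,1)$‑forms over the complex \emph{surface} $\Omega^k_{z'}$ whose principal part dominates the $h^{(k)}$‑Laplacian times the identity. The desired estimate is then an instance of the classical fact (G\"{a}rding \cite{Garding53}; see also Theorem~3.2 in \cite{Berman04}) underlying \eqref{eq:garding}, provided the constants can be taken uniform in $k$; this is the same interior estimate used in \cite{Fu05}, the only new point being that the curvatures of $h$ and of $E^k$ contribute, after rescaling, only uniformly bounded lower order terms.

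The mechanism is integration by parts. Because $u$ has compact support there are no boundary terms, and the Morrey--Kohn--H\"{o}rmander identity expresses $\|\dbar_k u\|^2+\|\dbarstar_k u\|^2$, up to lower order terms, as a sum $\sum_{i,j}e_i\|\ov{L^k_i}u_j\|^2$ with weights $e_i\ge 1$ (the cross terms coming from $\dbar_k$ and from $\dbarstar_k$ cancel exactly, as in the flat case), the lower order terms being (i) zeroth order in $u$ with coefficient the curvature of $E^{(k)}$ and of $h^{(k)}$, and (ii) first order in $u$ with coefficients built from the first derivatives of $h^{(k)}$ and $\varphi^{(k)}$ and from the torsion of the frames $\omega^k_i$. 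Since $u\in C^\infty_c$, for the flat metric the holomorphic and antiholomorphic first derivatives have equal $L^2$‑norms (this is precisely why $(\dbar,\dbarstar)$ is elliptic on $(0,q)$‑forms for $0<q<n$), so $\sum_{i,j}\|\ov{L^k_i}u_j\|^2+\|u\|^2$ already controls $\|u\|^2_{1,\Omega^k_{z'}}$ up to a uniform constant, and the anisotropy $\tau_k/\sigma_k\ge 1$ only helps. It then remains to absorb the error terms: choosing the small constant in the first order error suitably and bounding the zeroth order error yields $\|u\|^2_{1,\Omega^k_{z'}}\lesssim Q^{(k)}(u,u)+\|u\|^2_{h^{(k)},\varphi^{(k)}}$, once the coefficients of the error terms are bounded uniformly in $k$ on $P'$.

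The main obstacle, and the only place the scaling hypotheses $\tau_k^2=1/k$ and $c\,\delta_k^{1/2}\le\sigma_k\le\tau_k$ are used, is exactly this uniform control of the rescaled data on $P'$. For the base metric it is immediate from $h_{jl}(0)=\delta_{jl}$ in \eqref{eq:base-metric}: $h^{(k)}_{jl}(\zeta)=h_{jl}(\tau_k\zeta_1,\sigma_k\zeta_2)\to\delta_{jl}$ and all its derivatives are $O(1)$ (indeed $o(1)$ for orders $\ge 1$) on $P'$, so $h^{(k)}$ is uniformly elliptic there with uniformly bounded curvature. For the fiber metric, write $\varphi^{(k)}(\zeta)=k\,\varphi(\tau_k\zeta_1,\sigma_k\zeta_2)$ and use \eqref{eq:fiber-metric}: each quadratic monomial $a_{jl}z_j\bar z_l$ of $\varphi$ contributes at most $k\tau_k^2=1$, the cubic remainder contributes $O(k\tau_k^3)=O(k^{-1/2})$, and the same accounting after differentiation shows $\varphi^{(k)}$, $\nabla\varphi^{(k)}$, and $\nabla^2\varphi^{(k)}$ are all $O(1)$ on $P'$; in particular the curvature $\partial\dbar\varphi^{(k)}$ of $E^{(k)}$ is $O(1)$. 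Finally, the frames $\omega^k_i$, $L^k_i$ are built by Gram--Schmidt from $\partial\rho_k$, where $\rho_k(\zeta)=\Re\zeta_2+\sigma_k^{-1}\psi(\tau_k\zeta_1,\sigma_k\Im\zeta_2)$; using the explicit form \eqref{defining-1} of $\psi$ together with \eqref{al-def}, \eqref{delta-comp}, \eqref{FS-2} and $\sigma_k\ge c\,\delta_k^{1/2}$, one checks that $\rho_k\to\Re\zeta_2$ in $C^\infty_{\mathrm{loc}}$, so the frames and their torsion are uniformly bounded and $\partial\rho_k$ stays bounded away from $0$ on $P'$. With these uniform bounds the integration by parts and the absorption above go through with $k$‑independent constants for all large $k$, giving \eqref{eq:garding1}.
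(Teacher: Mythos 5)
Your proposal is correct and follows essentially the same route as the paper: an interior Morrey--Kohn--H\"ormander/G\aa rding estimate for compactly supported forms, with $k$-uniformity coming from $k\tau_k^2=1$, $\sigma_k\le\tau_k$, and the normalizations of $h$ and $\varphi$ at $z'$, followed by the flat integration-by-parts step (using $|\varphi^{(k)}|\lesssim 1$) to pass from antiholomorphic derivatives of the components to the full $H^1$-norm. The only difference is bookkeeping: the paper integrates by parts on the original domain, where the curvature of $E^k$ is absorbed by $k\|v\|^2$ and only the $O(\tau_k)$-closeness of $\overline{L}_1,\overline{L}_2$ to the coordinate derivatives is needed, and then rescales, whereas you run the identity directly on $\Omega^k_{z'}$ and must therefore also control the rescaled frames and their torsion via $\rho_k\to\Re\zeta_2$, a step whose justification uses \eqref{FS-2} (hence pseudoconvexity and finite type) -- harmless where the lemma is applied, but an input the paper's ordering avoids.
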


\begin{proof} Write $u=u_1\overline{\omega}^{(k)}_1+
u_2\overline{\omega}^{(k)}_2$ and $v=\scriptf_k^{-1}
(u)=v_1\overline{\omega}_1+v_2\overline{\omega}_2$. Since $v$ is
supported on $R'_k=F_k(P')=\{z\in U_{z'} \mid |z_1|<(1/2)\tau_k,
|z_2+\sigma_k|<(1/2)\sigma_k\}\subset\subset \Omega$, it follows
from integration by parts that
\[
Q^k_\Omega(v, v)+ k\|v\|^2_{h, k\varphi}\gtrsim \sum_{j, l=1}^2
\|\overline{L}_j v_l\|^2_{h, k\varphi}.
\]
Notice that on $R'_k$,
\[
|h_{jl}(z)-\delta_{jl}|\lesssim \tau_k.
\]
>From Section~\ref{sec:finite-type}, we obtain by direct
calculations that on $R'_k$,
\[
\overline{L}_1=(1+O(\tau_k))\frac{\partial}{\partial \bar
z_1}+O(\tau_k)\frac{\partial}{\partial\bar z_2}, \quad
\overline{L}_2=O(\tau_k)\frac{\partial}{\partial \bar
z_1}+(1+O(\tau_k))\frac{\partial}{\partial\bar z_2}.
\]
Thus,
\begin{align*}
Q^{(k)}(u, u)+\|u\|^2_{h^{(k)}, \varphi^{(k)}}
&=\tau_k^2(Q^k_\Omega(v, v)+k\|v\|^2_{h, k\varphi})\gtrsim
\sum_{j, l=1}^2\|\tau_k \overline{L}_j v_l\|^2_{h,
k\varphi}\\
&\gtrsim \sum_{j=1}^2 \big(\|\frac{\partial u_j}{\partial\bar
z_1}\|^2_{h^{(k)}, \varphi^{(k)}}+ \frac{\tau_k^2}{\sigma_k^2}
\|\frac{\partial u_j}{\partial\bar z_2}\|^2_{h^{(k)},
\varphi^{(k)}}\big) \ge \sum_{j, l=1}^2 \|\frac{\partial
u_j}{\partial\bar z_l}\|^2_{h^{(k)}, \varphi^{(k)}}.
\end{align*}
Since $|\varphi^{(k)}|\lesssim 1$ on $P'$ and $u$ is compactly
supported in $P'$, a simple integration by parts argument then
yields the estimate \eqref{eq:garding1}.
\end{proof}

We now complete the proof of
Proposition~\ref{prop:kernel-estimate}.  From
Lemma~\ref{lemma:garding1}, we know that $\square^{(k)}$ is
uniformly (independent of $k$) strong elliptic on $P'$. Let
$P''=\{\zeta\in P' \mid |\zeta_1|<1/4, |\zeta_2+1|<1/4\}$. Thus by
G\"{a}rding's inequality,
\begin{equation}\label{eq:garding2}
\|u\|_{2M, P''}\lesssim \|(\square^{(k)})^M u\|_{P'}+\|u\|_{P'}
\end{equation}
for any $u\in C^\infty_{0, 1}(\Omega^{k}_{z'}, E^{(k)})$, where
$\square^{(k)}=\tau^2_k\scriptf_k \square^k_\Omega\scriptf^{-1}_k$
acts formally.

Let ${\bf E}_k(\lambda)$ be the spectral resolution of
$\square^k_\Omega$, the $\dbar$-Neumann Laplacian on $\Omega$ on
$(0, 1)$-forms with values in $E^k$. Let $v\in {\bf
E}_k(Ck)(L^2_{0, 1}(\Omega, E^k))$ be of unit norm. Then for any
positive integer $M$,

\begin{equation}\label{eq:pr1}
\|(\square^k_\Omega)^M v\|^2_{\Omega, E^k}\le (Ck)^{2M}.
\end{equation}

Let $u_k=\scriptf_k(v')$, where $v'$ is the restriction of $v$ to
$U_{z'}$. Then
\[
(\square^{(k)})^M u_k =\tau_k^{2M} \scriptf_k(\square^{k}_\Omega)^M
v'.
\]
By \eqref{eq:pr1}, we have
\[
\|(\square^{(k)})^M u_k\|^2_{h^{(k)}, \varphi^{(k)}} \lesssim 1.
\]
We obtain from \eqref{eq:garding2} and the Sobolev embedding
theorem that
\[
\tau_k \sigma_k |v(0, -\sigma_k)|=|u_k(0, -1)|\lesssim 1.
\]
Thus by \eqref{eq:e-s-compare}, we have
\[
\trace e_k(Ck; (0, -\sigma_k), (0, -\sigma_k))\lesssim
(\tau_k\sigma_k)^{-2}\le k\delta_k^{-1}.
\]
Since the constant in this estimate is uniform as $z'$ varies the
boundary $b\Omega$ and $\sigma_k$ varies between $c\delta_k^{1/2}$
and $\tau_k$, we thus conclude the proof of
Proposition~\ref{prop:kernel-estimate}.

\subsection{Boundary estimates}\label{sec:boundary}

\subsubsection{Main boundary estimate} We shall establish the
following boundary estimate for the spectral kernel.

\begin{proposition}\label{prop:spectral-2}  Let $C>0$. For any $z'\in b\Omega$ and
sufficiently large $k$,
\begin{equation}\label{spectral-3}
\int_{R_{\tau_k}(z')\cap\Omega} \trace e_k(Ck; z, z) dV(z)\lesssim
(\delta(z', \tau_k))^{-1/2} .
\end{equation}
\end{proposition}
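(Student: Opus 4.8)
The plan is to reduce the boundary estimate to an anisotropically rescaled model problem on a fixed-size domain and then invoke a uniform Gårding-type inequality, exactly parallel to the interior argument in Section~\ref{sec:interior} but now including the boundary portion of $R_{\tau_k}(z')$. First I would fix $z'\in b\Omega$, use the Fornaess--Sibony normalizing coordinates $(z_1,z_2)$ of Section~\ref{sec:finite-type} with defining function $\rho$ as in \eqref{defining-1}, and apply the anisotropic dilation $F_k(\zeta_1,\zeta_2)=(\tau_k\zeta_1,\delta_k^{1/2}\zeta_2)$ with $\delta_k=\delta(z',\tau_k)$. Under this dilation the rescaled defining function $\rho_k$ and the rescaled bundle weight $\varphi^{(k)}(\zeta)=k\varphi(\tau_k\zeta_1,\delta_k^{1/2}\zeta_2)$ are bounded in $C^\infty$ on a fixed neighborhood of the origin, by \eqref{defining-1}, \eqref{FS-2}, \eqref{delta-comp} and the $C^2$-bound $|\varphi|\lesssim 1$ coming from \eqref{eq:fiber-metric} (since $k|\varphi(\tau_k\zeta)|\lesssim k\tau_k^2=1$). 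The key point—and the reason $\delta_k^{1/2}$ is the right vertical scale rather than an arbitrary $\sigma_k$—is that the rescaled domain $\Omega^k_{z'}=F_k^{-1}(\Omega_{z'})$ converges to a \emph{bounded} model pseudoconvex domain with smooth boundary, and the image $F_k^{-1}(R_{\tau_k}(z')\cap\Omega)$ sits inside a fixed compact set independent of $k$.

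Next I would transplant the quadratic form: set $Q^{(k)}(u,u)=\tau_k^2\,Q^k_\Omega(\scriptf_k^{-1}u,\scriptf_k^{-1}u)$ with the isometry $\scriptf_k$ as in Section~\ref{sec:interior}, so that the rescaled $\dbar$-Neumann Laplacian $\square^{(k)}$ on $\Omega^k_{z'}$ is, up to lower-order bundle-curvature terms, the ordinary $\dbar$-Neumann Laplacian of the model domain. Here the essential new input over the interior case is that the \emph{boundary condition is genuinely present}: I need a Gårding-type inequality of the form $\|u\|^2_{1/2,\,\Omega^k_{z'}\cap P}\lesssim Q^{(k)}(u,u)+\|u\|^2$ valid for $u\in\dom(Q^{(k)})$ supported near the boundary patch $P$, \emph{uniformly in $k$}. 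Since the rescaled domains converge smoothly to a bounded smooth pseudoconvex surface domain, the basic $1/2$-estimate for the $\dbar$-Neumann problem on pseudoconvex domains (Kohn) holds with a constant uniform in $k$; the curvature correction terms $k\varphi(\tau_k\zeta)$ and $h_{jl}(\tau_k\zeta_1,\delta_k^{1/2}\zeta_2)-\delta_{jl}$ are $O(\tau_k)$ in $C^1$ and hence absorbable. This gives uniform subellipticity of order $1/2$ for $\square^{(k)}$ near the boundary patch, and then a standard elliptic/subelliptic iteration yields an interior-plus-boundary Sobolev estimate $\|u\|_{s,P''}\lesssim\|(\square^{(k)})^M u\|_{P}+\|u\|_{P}$ with $s>2$ for $M$ large, with constants independent of $k$ and $z'$.

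Finally I would run the spectral-kernel argument as in the proof of Proposition~\ref{prop:kernel-estimate}: for a unit-norm $v\in{\bf E}_k(Ck)(L^2_{0,1}(\Omega,E^k))$ one has $\|(\square^k_\Omega)^M v\|\le(Ck)^M$, hence $\|(\square^{(k)})^M u_k\|_{h^{(k)},\varphi^{(k)}}\lesssim 1$ with $u_k=\scriptf_k(v|_{U_{z'}})$, and Sobolev embedding gives $|u_k(\zeta)|\lesssim 1$ uniformly on $P''$. Translating back through $\scriptf_k$ (which contributes the factor $(\tau_k\delta_k^{1/2})^{-2}$) and using \eqref{eq:e-s-compare} gives $\trace e_k(Ck;z,z)\lesssim(\tau_k^2\delta_k)^{-1}=k\delta_k^{-1}$ pointwise on $F_k(P'')$; integrating over $R_{\tau_k}(z')\cap\Omega$, whose volume is $\lesssim\tau_k^2\delta_k$, together with the doubling/engulfing properties of $R_\tau$ recalled in Section~\ref{sec:finite-type} to cover it by boundedly many such rescaled patches, yields the bound $\lesssim k\delta_k^{-1}\cdot\tau_k^2\delta_k=\tau_k^2 k=1$. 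Wait—this would give $\lesssim 1$, not $\delta_k^{-1/2}$, so the actual accounting must be more delicate: one does \emph{not} integrate the crude pointwise bound over the whole of $R_{\tau_k}(z')$, but rather only over the ``interior'' slab where the pointwise estimate of Proposition~\ref{prop:kernel-estimate} applies, while near the boundary one integrates the normal variable $r(z)$ from $0$ up to a comparable scale, and the extra $\delta_k^{-1/2}$ arises because the relevant $L^2$-mass of a rescaled eigenform concentrated near $b\Omega$ picks up one power of $\delta_k^{1/2}$ less than the full volume factor. I expect this bookkeeping—matching the boundary layer to the interior estimate and extracting the correct power $\delta_k^{-1/2}$—to be the main obstacle; it is handled by partitioning $R_{\tau_k}(z')\cap\Omega$ into the region $\{d(z)\ge c\delta_k^{1/2}\}$, where Proposition~\ref{prop:kernel-estimate} gives $\trace e_k\lesssim k\delta_k^{-1}$ over a set of volume $\lesssim\tau_k^2\delta_k^{1/2}$ in the normal direction, and the complementary thin boundary strip, where the rescaled uniform estimate above applies directly.
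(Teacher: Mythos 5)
The proposal has a fundamental gap at its central step, and the patch suggested at the end does not repair it.

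The key error is the choice of vertical scale $\delta_k^{1/2}$ in the dilation $F_k(\zeta_1,\zeta_2)=(\tau_k\zeta_1,\delta_k^{1/2}\zeta_2)$, together with the claim that the rescaled domains $\Omega^k_{z'}$ converge to a bounded smooth pseudoconvex domain on which Kohn's $1/2$-estimate holds uniformly. Under this rescaling the normalized defining function $\rho_k(\zeta)=\delta_k^{-1/2}\rho(\tau_k\zeta_1,\delta_k^{1/2}\zeta_2)$ converges (in $C^\infty$, on a fixed bidisc) to $\Re\zeta_2$: the $P$-term contributes $\delta_k^{-1/2}P(\tau_k\zeta_1)=O(\delta_k^{1/2})$, and the Levi form $\partial\dbar\rho_k(\partial_{\zeta_1},\partial_{\bar\zeta_1})$ is $O(\delta_k^{1/2})\to 0$ as well. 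So the limit domain is a half-space with completely flat boundary, and Kohn's basic $1/2$-estimate — which requires strict pseudoconvexity, not mere pseudoconvexity — does not hold with a constant uniform in $k$. There is therefore no uniform subelliptic estimate of order $1/2$, no uniform G\aa rding-type bootstrap, and hence no uniform pointwise bound $|u_k(\zeta)|\lesssim 1$ up to the rescaled boundary. Your own observation that the naive accounting yields $\lesssim 1$ rather than $\lesssim\delta_k^{-1/2}$ is the symptom: such a uniform pointwise kernel bound up to the boundary would produce an estimate strictly stronger than the proposition, and the correct statement is genuinely weaker precisely because the boundary layer is where the $\dbar$-Neumann spectral kernel concentrates and where the subelliptic gain degenerates. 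The concluding "partition into an interior slab plus a thin boundary strip, where the rescaled uniform estimate applies directly" is circular — the missing content is exactly how to control the thin boundary strip, and the uniform estimate you propose to apply there is the thing you have not established.

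The paper proceeds quite differently. It first flattens the boundary via the holomorphic-in-$z_1$ change of variables $\Phi_{z'}$ of \cite{Fu05}, then rescales with vertical scale $\delta_k$ (not $\delta_k^{1/2}$), so that the rescaled domain sits inside the half-space $\C^2_-$ and the relevant set $P_{\tau_k}(z')$ is the large slab $\{|w_1|<1,\ |w_2|<\delta_k^{-1/2}\}$ rather than a fixed compact set. The key analytic input is then not an elliptic G\aa rding inequality near the boundary, but the tangential anisotropic Sobolev estimate of Lemma~\ref{lm:Q-estimate}, obtained by Kohn's commutator method from the finite type hypothesis; the gain $\eps$ there is small (of the order $1/2m$), not $1/2$, and the normal derivative is controlled only in a negative tangential Sobolev norm weighted by $\tau_k^2\delta_k^{-2}$. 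From that estimate the proof compares the rescaled Neumann inverse $N_{\tau_k}$ with an auxiliary constant-coefficient operator $N_{\eps,\delta_k}$ whose singular values can be counted explicitly, and runs an inductive singular-value argument (the $\lambda_j(\cdot)$ bounds in \eqref{n-tau}--\eqref{lambda-e-ell}) to show that the cut-off restriction of the spectral projection is trace class with trace norm $\lesssim\delta_k^{-1/2}$. This is a trace-class estimate on an operator, not a pointwise kernel bound integrated over a box, and that change of viewpoint — together with the parabolic $\delta_k$-scaling that keeps the rescaled Levi form nondegenerate — is exactly what is missing from the proposal.
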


Recall that $\tau_k=1/\sqrt{k}$ and $R_{\tau_k}(z')$ is the
anisotropic bidisc given by \eqref{eq:anisotropic}. Assume
Proposition~\ref{prop:spectral-2} for a moment, we now prove
the sufficiency in Theorem~\ref{main-theorem}. In fact we shall
prove the following:

\begin{proposition}\label{prop:n-est} Let $\Omega\subset\subset X$
be a smoothly bounded pseudoconvex domain in a complex surface. Let $E$
be a holomorphic line bundle over $\Omega$ that extends smoothly to $b\Omega$. If $b\Omega$ is of finite type $2m$, then for any $C>0$, there exists $C'>0$ such that
 $N_k(Ck)\le C'k^{1+m}$. More precisely, $\lim_{k\to\infty} N_k(Ck)/k^{1+m}=0$ when $m>1$.
\end{proposition}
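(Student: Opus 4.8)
The plan is to deduce Proposition~\ref{prop:n-est} from the two local spectral-kernel estimates already in hand (the interior estimate, Proposition~\ref{prop:kernel-estimate}, and the boundary estimate, Proposition~\ref{prop:spectral-2}) by integrating $\trace e_k(Ck;z,z)$ over $\Omega$ and observing that $N_k(Ck)=\int_\Omega \trace e_k(Ck;z,z)\,dV(z)$. First I would split $\Omega$ into a fixed compact interior piece $K$, on which G\"{a}rding's estimate \eqref{eq:garding} gives $\int_K\trace e_k(Ck;z,z)\,dV\lesssim k^2$ (which is $o(k^{1+m})$ when $m>1$, and $O(k^{1+m})$ when $m=1$), and a collar neighborhood of $b\Omega$ of width comparable to $\tau_k=1/\sqrt k$. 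The collar is further decomposed, for each boundary point $z'$, using the anisotropic bidiscs $R_{\tau_k}(z')$; by the doubling and engulfing properties quoted at the end of Section~\ref{sec:finite-type} one can choose a Vitali-type subcover of $b\Omega$ by boundedly many overlapping $R_{\tau_k}(z'_j)$ whose dilates cover the whole collar $\{0\le |r(z)|\le \tau_k\}$. Then Proposition~\ref{prop:spectral-2} gives $\int_{R_{\tau_k}(z'_j)\cap\Omega}\trace e_k(Ck;z,z)\,dV\lesssim \delta(z'_j,\tau_k)^{-1/2}$ for each $j$.

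The next step is the counting of the bidiscs and the summation of these local bounds. The number of $R_{\tau_k}(z'_j)$ needed to cover $b\Omega$ is comparable to $\sum_j 1$, and since each $R_{\tau_k}(z'_j)\cap b\Omega$ has two-dimensional surface measure comparable to $\tau_k\,\delta(z'_j,\tau_k)^{1/2}$ (one tangential direction of size $\tau_k$, one of size $\delta^{1/2}$, the normal direction being transverse to $b\Omega$), bounded overlap gives $\sum_j \tau_k\,\delta(z'_j,\tau_k)^{1/2}\approx \mathrm{area}(b\Omega)\approx 1$. Hence
\begin{equation}\label{eq:n-sum}
N_k(Ck)\lesssim k^2+\sum_j \delta(z'_j,\tau_k)^{-1/2}
= k^2+\sum_j \big(\tau_k\,\delta(z'_j,\tau_k)^{1/2}\big)\,\frac{1}{\tau_k\,\delta(z'_j,\tau_k)}
\lesssim k^2+\frac{1}{\tau_k}\,\sup_{z'}\frac{1}{\delta(z',\tau_k)}.
\end{equation}
Now invoke finite type $2m$: the lower bound $\delta(z',\tau)\gtrsim\tau^{2m}$ uniform in $z'$, together with $\tau_k=k^{-1/2}$, gives $\delta(z',\tau_k)^{-1}\lesssim k^{m}$, so $\tau_k^{-1}\sup_{z'}\delta(z',\tau_k)^{-1}\lesssim k^{1/2}\cdot k^{m}=k^{1/2+m}$ — wait, this is $k^{m+1/2}$, even better than claimed; but to be safe against the overlap counting I would phrase the sum as $\sum_j\delta(z'_j,\tau_k)^{-1/2}\lesssim \#\{j\}\cdot k^{m/2}$ and bound $\#\{j\}\lesssim \tau_k^{-1}\sup_{z'}\delta(z',\tau_k)^{-1/2}\lesssim k^{1/2}k^{m/2}$, yielding $N_k(Ck)\lesssim k^2+k^{(m+1)/2}\cdot k^{m/2}=k^2+k^{m+1/2}$, hence $\lesssim k^{1+m}$ for $m\ge 1$, and $o(k^{1+m})$ when $m>1$ since then $m+1/2<m+1$ and $2<m+1$. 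I would double-check the exact exponent bookkeeping here; the qualitative conclusion $N_k(Ck)\le C'k^{1+m}$ and the little-$o$ refinement for $m>1$ will come out either way, the only real content being that each collar slab of anisotropic width $\delta^{1/2}$ contributes $\delta^{-1/2}$ and there are $\approx \tau_k^{-1}\delta^{-1/2}$ slabs.

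Before the covering argument I must justify that it suffices to integrate over the collar $\{|r(z)|\lesssim\tau_k\}$: for $c\delta(\pi(z),\tau_k)^{1/2}\le |r(z)|\le\tau_k$ the interior estimate \eqref{eq:kernel-estimate} gives $\trace e_k(Ck;z,z)\lesssim k\,\delta(\pi(z),\tau_k)^{-1}$, which integrated over an $R_{\tau_k}(z')$-column (volume $\approx \tau_k^2\,\delta^{1/2}$, but really the relevant slab away from the boundary has $|r|$-extent $\lesssim\tau_k$ and the $z$-section already accounted for) is again controlled by the same quantity as in Proposition~\ref{prop:spectral-2}, so the two estimates are consistent and can be combined; and for $|r(z)|\ge\tau_k$ one uses the $\{d(z)\ge 1/\sqrt k\}$ version of G\"{a}rding quoted after \eqref{eq:garding}, giving the $k^2$ term. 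The main obstacle I anticipate is purely combinatorial-geometric: making the Vitali covering of $b\Omega$ by anisotropic bidiscs precise — controlling the number of bidiscs and their overlap multiplicity uniformly in $k$, and checking that the surface-measure estimate $\mathrm{area}(R_{\tau_k}(z')\cap b\Omega)\approx\tau_k\,\delta(z',\tau_k)^{1/2}$ holds with constants independent of $z'$ and $k$. All of this is standard once the doubling/engulfing properties of $\{R_\tau(z')\}$ from \cite{Fu05} are in hand, so the step should go through, but it is where the real work lies; the rest is bookkeeping with the two propositions and with $\delta(z',\tau)\gtrsim\tau^{2m}$.
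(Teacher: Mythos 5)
Your strategy is the right one and is essentially the paper's, so let me focus on where the bookkeeping goes wrong and where you skip real content.

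The dimension count for the bidisc caps is off. In a complex surface, $b\Omega$ is a real hypersurface of dimension $3$. The anisotropic bidisc $R_\tau(z')=\{|z_1|<\tau,\ |z_2|<\delta^{1/2}\}$ has two real dimensions in the $z_1$-variable and two in $z_2$; intersecting with $b\Omega$ leaves the full $z_1$-disc (contributing $\tau^2$) together with one tangential real direction in $z_2$ (contributing $\delta^{1/2}$), the remaining $z_2$-direction being transverse. So
\[
\mathrm{area}\big(R_{\tau_k}(z')\cap b\Omega\big)\approx \tau_k^2\,\delta(z',\tau_k)^{1/2},
\]
not $\tau_k\,\delta^{1/2}$. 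With the corrected count, $\sum_j \tau_k^2\,\delta(z'_j,\tau_k)^{1/2}\approx 1$ by bounded overlap, and then
\[
\sum_j \delta(z'_j,\tau_k)^{-1/2}
=\sum_j \big(\tau_k^2\,\delta(z'_j,\tau_k)^{1/2}\big)\frac{1}{\tau_k^2\,\delta(z'_j,\tau_k)}
\lesssim \frac{1}{\tau_k^2}\,\sup_{z'}\delta(z',\tau_k)^{-1}\lesssim k\cdot k^m=k^{1+m},
\]
using $\delta(z',\tau_k)\gtrsim \tau_k^{2m}=k^{-m}$. This is exactly the exponent in the proposition; your $k^{m+1/2}$ was an artifact of the mis-counted surface measure, and you cannot get better than $k^{1+m}$ by this argument (indeed, if every boundary point were of exact type $2m$, the sum saturates at $\approx k^{1+m}$).

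Because the corrected bound is $O(k^{1+m})$ rather than $O(k^{m+1/2})$, the claim $\lim_{k\to\infty}N_k(Ck)/k^{1+m}=0$ for $m>1$ is \emph{not} automatic and needs an argument you have not supplied. The paper obtains it by noting that $\tau_k^{2m}/\delta(z',\tau_k)\to 0$ pointwise for every boundary point of type strictly less than $2m$, that the set of weakly pseudoconvex boundary points (in particular any point of type $\ge 4$) has zero surface measure when $m>1$, and then applying the Lebesgue dominated convergence theorem to $\int_{b\Omega}\delta(z',\tau_k)^{-1}\,dS(z')$ after normalizing by $k^m$. You should incorporate this.

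Finally, a remark on method: the paper sidesteps the Vitali covering you anticipate as the main obstacle. Instead of choosing a discrete cover, it multiplies the boundary estimate $\int_{R_{\tau_k}(z')\cap\Omega}\trace e_k\,dV\lesssim \delta(z',\tau_k)^{-1/2}$ by $\delta(z',\tau_k)^{-1/2}$ and integrates in $z'$ over $V\cap b\Omega$, then applies Fubini--Tonelli. The inner integral $\int_{V\cap b\Omega}\chi_{\Omega\cap R_{\tau_k}(z')}(z)\,\delta(z',\tau_k)^{-1/2}\,dS(z')$ is bounded \emph{below} by $\approx\tau_k^2$ uniformly for $z$ in the thin collar, by the doubling/engulfing properties (this is Lemma~3.4 of \cite{Fu05}), which yields the same $\tau_k^{-2}\int_{V\cap b\Omega}\delta(z',\tau_k)^{-1}\,dS(z')$ factor without any covering lemma. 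Your discrete approach can be made to work but requires the overlap-multiplicity control you flagged; the continuous Fubini version is cleaner and is what the paper does.
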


\begin{proof} We cover $b\Omega$ by finitely many open sets, each of which is contained in a coordinate patch as the $V$'s in
Section~\ref{sec:finite-type}. Let $z'\in V\cap b\Omega$.
Multiplying both sides of  \eqref{spectral-3} by $(\delta(z',
\tau_k))^{-1/2}$ and integrating with respect to $z'\in V\cap
b\Omega$, we obtain by the Fubini-Tonelli theorem that
\[
\int_{\Omega} \trace e_k(Ck;  z, z)\, dV(z) \int_{V\cap b\Omega}
\chi_{\Omega\cap R_{\tau_k}(z')}(z) (\delta(z', \tau_k))^{-1/2}\,
dS(z')\lesssim \int_{V\cap b\Omega} (\delta(z', \tau_k))^{-1}\,
dS(z').
\]
(Here $\chi_S$ denotes the characteristic function of the set
$S$.) By Lemma 3.4 in [Fu05], we then have
\begin{equation}\label{eq:e1}
\int_{V\cap \{z\in\Omega\mid d(z)<c(\delta(\pi(z),
\tau_k))^{1/2}\} } \trace e_k(Ck;  z, z)\, dV(z)\lesssim
\tau_k^{-2}\int_{V\cap b\Omega} (\delta(z', \tau_k))^{-1}\,
dS(z'),
\end{equation}
for some positive constant $c$.

On $\{z\in \Omega \mid d(z)\ge c(\delta(\pi(z), \tau_k))^{1/2}\}$,
we know from Proposition~\ref{prop:kernel-estimate} that
\begin{equation}\label{eq:e2}
\trace e_k (Ck; z, z)\lesssim k(\delta(\pi(z),
\tau_k))^{-1}\lesssim k^{1+m}.
\end{equation}
Also, on any relatively compact subset of $\Omega$, we have
\begin{equation}\label{eq:e3}
\trace e_k(Ck; z, z)\lesssim k^2,
\end{equation}
where the constant depends on the compact set (see
\eqref{eq:garding}). By definition, we have
\begin{equation}\label{eq:e4}
N_k(Ck)=\int_\Omega \trace e_k(Ck; z, z)\, dV(z).
\end{equation}
It then follows from \eqref{eq:e1}-\eqref{eq:e3} that
\[
N_k(Ck)\lesssim k^{1+m}.
\]
Note that
\begin{equation}\label{eq:e5}
\lim_{k\to\infty} \frac{\tau_k^{2m}}{\delta(z', \tau_k)}=0
\end{equation}
when $z'\in b\Omega$ is of type less than $2m$ and the set of
weakly pseudoconvex boundary points has zero surface measure.
Combining \eqref{eq:e1}-\eqref{eq:e5}, we obtain from the Lebesgue
dominated convergence theorem that $\limsup_{k\to\infty}
N_k(Ck) /k^{m+1}=0$ when $m>1$. \end{proof}

\noindent{\bf Remark}. Heuristic arguments seem to suggest that
the optimal estimates are $N_k(Ck)\lesssim k^m\ln k$ when $m=2$
and $N_k(Ck)\lesssim k^m$ when $m>2$.

The remaining subsections are devoted to prove
Proposition~\ref{prop:spectral-2}.  The proof follows along the
line of arguments of the proof of Lemma 6.2 in \cite{Fu05}: we
need to show here that the contributions from the curvatures of
the metric on the base $X$ and fiber metrics on $E^k$ are negligible.
We provide necessary details below.

\subsubsection{Uniform Kohn Estimate}\label{sec:kohn} We will use
a slight differently rescaling scheme from the one in the previous
section. Following \cite{Fu05}, we flatten the boundary before
rescaling the domain and the $\dbar$-Neumann Laplacian. Let $z'\in
b\Omega$. As in Section~\ref{sec:interior}, we may choose local
holomorphic coordinates $(z_1, z_2)$, centered and orthonormal at
$z'$, such that a defining function $\rho$ of $b\Omega$ in a
neighborhood $U_{z'}$ of $z'$ has the form given by
\eqref{defining-1}. Furthermore, we may assume that the base
metric $h$ on $X$ and the fiber metric $\varphi$ on $E$ are of the forms
\eqref{eq:base-metric} and \eqref{eq:fiber-metric} respectively.
Write
\[
\rho=\Re z_2+f(z_1)+(\Im z_2)g_1(z_1)+(1/2)(\Im z_2)^2
g_2(z_1)+O(|\Im z_2|^3),
\]
where
\[ f(z_1)= P(z_1)+O(|z_1|^{2m+1}),\ \ g_1(z_1)=
Q(z_1)+O(|z_1|^{m+1}),\ \ g_2(z_1)=O(|z_1|).
\]
Let
\[(\eta_1,
\eta_2)=\Phi_{z'}(z_1, z_2)=(z_1,\ z_2+ h(z_1, \Im z_2)-F(z_1,
z_2)),
\]
where
\[
F(z_1, z_2)=\frac{1}{2}g_2(z_1)(\Re z_2+  h(z_1, \Im
z_2))^2+i(g_1(z_1)(\Re z_2)+g_2(z_1)(\Re z_2) (\Im z_2)).
\]
(See \cite{Fu05}, Section~4.)

Let $\hat\rho(z)= \rho(z)- (1/2)g_2(z_1)( \rho(z))^2$. Then
$\hat\rho(z)$ is a also defining function for
$\Omega_{z'}=\Omega\cap U_{z'}$ near the origin.  Let $\omega_1$
and $\omega_2$ be an othonormal basis for $(1, 0)$-forms on
$U_{z'}$ obtained as in Section~\ref{sec:interior} but with $\rho$
replaced by $\hat\rho$. Let $L_1$ and $L_2$ be the dual basis for
$T^{1,0}(U_{z'})$.

 We now proceed with the rescaling.  Write $\delta=\delta(z', \tau)$.
 For any $\tau>0$, we define
\[
(w_1, w_2)=D_{z', \tau}(\eta_1, \eta_2)=(\eta_1/\tau,
\eta_2/\delta).
\]
Let  $\Phi_{z', \tau}=D_{z', \tau}\circ\Phi_{z'}$ and let
$\Omega_{z', \tau}=\Phi_{z', \tau}(\Omega_{z'})\subset\{(w_1,
w_2)\in \C^2 \mid \Re w_2<0\}$. (In what follows, we sometimes
suppress the subscript $z'$ for economy of notations when this
causes no confusions.) Let
\[
P_\tau(z')=\{|w_1|<1, \ |w_2|<\delta^{-1/2}\}.
\]
It is easy to see that $R_{C^{-1}\tau}(z')\subset \Phi^{-1}_{z',
\tau}(P_\tau(z'))\subset R_{C\tau}(z')$ (see Lemma 4.1 in
\cite{Fu05}). Let $\scriptg_{\tau}\colon (L^2(\Omega_{z',
\tau}))^2\to L^2_{(0, 1)}({\Omega}_{z'}, E^k)$ be the
transformation defined by
\[
\scriptg_{\tau} (u_1, u_2)=|\det d\Phi_\tau|^{1/2}\big(u_1
(\Phi_\tau)\ov{\omega}_1+ u_2(\Phi_\tau)\ov{ \omega}_2\big),
\]
where on $L^2(\Omega_{z', \tau})$ we use the standard Euclidean
metric and we identify as before forms with values in the line
bundle $E^k$ with its representation under the given holomorphic
trivialization. Let $\tau_k=1/\sqrt{k}$ and $\delta_k=\delta(z',
\tau_k)$ as before. Let
\begin{equation}\label{Q-tau-def}
Q_{\tau_k}(u, v)=\tau^2_k Q^k_\Omega(\scriptg_{\tau_k} u,
\scriptg_{\tau_k} v)
\end{equation}
be the densely defined, closed sesquilinear form on
$(L^2(\Omega_{\tau_k}))^2$ with
$\dom(Q_{\tau_k})=\{\scriptg^{-1}_{\tau_k}(u); \
u\in\dom(Q^k_\Omega), \ \supp u\subset U_{z'}\}$. Here, as before,
$Q^k_\Omega(\cdot, \cdot)$ is the sesquilinear form associated
with the $\dbar$-Neumann Laplacian $\square^k_\Omega$ on $L^2_{(0,
1)}(\Omega, E^k)$.

The following lemma play a crucial role in the analysis. It is a
consequence of Kohn's commutator method and is the analogue of
Lemma~4.5 in \cite{Fu05}. We use $\vvv \cdot \vvv^2_\eps$ to
denote the tangential Sobolev norm of order $\eps>0$ on
$\C^2_{-}=\{(w_1, w_2)\in \C^2\mid \Re w_2<0\}$.

\begin{lemma}\label{lm:Q-estimate}  There exists an $\eps>0$ such that
for any sufficiently large $k$,
\begin{equation}\label{eq:Q-estimate}
Q_{\tau_k}(u, u)+ \|u\|^2\gtrsim \vvv u
\vvv^2_\eps+\tau_k^2\delta_k^{-2} \vvv \frac{\partial u}{\partial
\bar w_2}\vvv^2_{-1+\eps},
\end{equation}
for all $u\in\dom(Q_{\tau_k})\cap C^\infty_c(P_{\tau_k}(z'))$.
\end{lemma}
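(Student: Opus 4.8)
The plan is to follow Kohn's commutator method in the rescaled coordinates, carefully tracking how the anisotropic dilation $D_{z',\tau_k}$ interacts with the curvature terms of both the base metric $h$ and the fiber metric $\varphi^{(k)}=k\varphi$, and showing these terms are lower-order. First I would unwind the form $Q_{\tau_k}$ explicitly: writing $u=(u_1,u_2)$ supported in $P_{\tau_k}(z')$ and pulling back via $\scriptg_{\tau_k}$, the quantity $\tau_k^2 Q^k_\Omega(\scriptg_{\tau_k}u,\scriptg_{\tau_k}u)$ becomes, up to controllable errors, a sum $\|\dbar_b$-type expressions in the $w$-variables plus the Levi-form boundary term $\int_{b\Omega_{\tau_k}}\rho_{\tau_k}$-Hessian applied to $u$, plus contributions from $\varphi^{(k)}$ and from the non-constancy of $h_{jl}$. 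The crucial observation, exactly as in the analysis preceding Lemma~\ref{lemma:garding1}, is that on $\Phi^{-1}_{z',\tau_k}(P_{\tau_k}(z'))\subset R_{C\tau_k}(z')$ one has $|h_{jl}(z)-\delta_{jl}|\lesssim\tau_k$ and $|\varphi^{(k)}|=k|\varphi(\tau_k w_1,\delta_k w_2)|\lesssim k(\tau_k^2+\delta_k)\lesssim 1$ since $\delta_k\lesssim\tau_k^2$; moreover the first derivatives of $\varphi^{(k)}$ in the rescaled variables are $O(k\tau_k\cdot\tau_k)=O(1)$ and $O(k\delta_k\cdot\tau_k^{-1}\delta_k^{-1}\cdot\delta_k)$-type bounded quantities, so integration by parts against these weights costs only $O(\|u\|^2)$.

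Next, having reduced to the ``model'' situation, the proof is a verbatim transcription of Lemma~4.5 of \cite{Fu05}. One applies $\dbar$-Neumann a priori estimates: the basic inequality gives control of $\dbar_b u$ and $\dbarstar_b u$ tangentially, and the key step is to estimate the tangential Sobolev norm $\vvv u\vvv_\eps^2$. Here one uses that the rescaled defining function $\rho_{\tau_k}$, after the flattening map $\Phi_{z'}$, has the form $\Re w_2 + (\text{rescaled }P)(w_1) + \ldots$, and the rescaled polynomial $\tau_k^{-2}\delta_k\cdot(\text{stuff})$ — more precisely $\rho_{\tau_k}$ restricted to $\Re w_2=0$ — has bounded coefficients by \eqref{delta-comp} and the normalization built into $\delta_k=\delta(z',\tau_k)$; this is precisely where $R_\tau(z')$ being the ``right'' anisotropic box enters. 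One then runs Kohn's commutator argument: commuting a tangential pseudodifferential operator $\Lambda^\eps$ of order $\eps$ past $\dbar_b$ and $\dbarstar_b$, the error commutators involve at most one derivative of the coefficients of $L_1,L_2$ and of $\rho_{\tau_k}$, all of which are uniformly bounded (independently of $k$ and $z'$), yielding the subelliptic gain for small $\eps>0$. The last term $\tau_k^2\delta_k^{-2}\vvv\partial u/\partial\bar w_2\vvv^2_{-1+\eps}$ comes from the ``free'' direction and is extracted from the $\dbarstar_b$-part of the form exactly as in \cite{Fu05}.

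The main obstacle I anticipate is bookkeeping the curvature contributions uniformly in $k$: one must verify that every term arising from $\partial\dbar\varphi^{(k)}$, from $d h^{(k)}$, and from the discrepancy between $\hat\rho$ and its flattened rescaled version is dominated by $Q_{\tau_k}(u,u)+\|u\|^2$ with a constant independent of both $k$ and the base point $z'\in b\Omega$. The potentially dangerous term is the fiber-curvature contribution $\int |\partial\dbar\varphi^{(k)}|\,|u|^2$, which equals $k\int|\partial\dbar\varphi|(\tau_k w_1,\delta_k w_2)|u|^2$; since $\partial\dbar\varphi$ is bounded and the Jacobian factor is already absorbed in $\scriptf_k$/$\scriptg_{\tau_k}$ being isometric, this is $\lesssim k\cdot 1\cdot\|u\|^2_{L^2(P_{\tau_k})}$ in the original scale, which after the $\tau_k^2=1/k$ prefactor becomes $\lesssim\|u\|^2$ — so it is indeed harmless, but making this rigorous requires care since $\varphi^{(k)}$ itself is not small, only its rescaled Hessian times $\tau_k^2$. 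Once this is checked, everything else is a routine (if lengthy) adaptation of \cite{Fu05}, and the uniformity in $z'$ follows from the smooth dependence of the coordinates $(z_1,z_2)$ on $z'$ together with the doubling/engulfing properties of $R_\tau(z')$ recalled in Section~\ref{sec:finite-type}.
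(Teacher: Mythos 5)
Your proposal follows the same route as the paper: reduce to the Euclidean model by observing that on the rescaled anisotropic box the curvature of the base metric contributes only $O(\tau_k)$ and the weight $k\varphi$ and its rescaled Hessian are $O(1)$ (so that weighted and unweighted norms are comparable and the fiber-curvature term is absorbed by $\|u\|^2$), then invoke the Hörmander--Kohn integration-by-parts identity to control $\overline L_j$ and, via the Levi-form boundary term, also $L_1$, and finally delegate the subelliptic gain $\vvv u\vvv_\eps^2 + \tau_k^2\delta_k^{-2}\vvv\partial_{\bar w_2}u\vvv_{-1+\eps}^2$ to the commutator argument of Lemma~4.5 in \cite{Fu05}. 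This is exactly the paper's proof; a minor slip is that one of your displayed bounds on $\nabla\varphi^{(k)}$ is garbled (it reads as $O(k^{3/2}\delta_k)$, which is not obviously $O(1)$), but the correct computation using $|\eta|\lesssim\tau_k+\delta_k^{1/2}$ and $k\delta_k\lesssim 1$ does give the needed $O(1)$ bound, so the conclusion stands.
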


\begin{proof} Let $v=\scriptg_k u=v_1\overline{\omega}_1+
v_2\overline{\omega}_2$.  It is easy to see that
\[
|k\varphi (z)|\lesssim 1, \qquad |h_{jl}(z)-\delta_{jl}|\lesssim
\tau_k
\]
on $R_{\tau_k}(z')$. Thus
\[
\|u\|^2\approx \|v\|^2_{h, k\varphi}\approx
\|v_1\|^2_0+\|v_2\|^2_0,
\]
where $\|\cdot\|_0$ denotes the $L^2$-norm corresponding to the
standard Euclidean metric on $U_{z'}$ in the $(z_1,
z_2)$-variables. We will also use $dV_0$ and $dS_0$ to denote the
volume and surface elements in the Euclidean metric.

By integration by parts, we have (see \cite{Hormander65, Kohn72}),
\begin{equation}\label{hormander}
Q^k_\Omega(v, v)+k\|v\|^2_{h, k\varphi}\gtrsim k\|v\|^2_{h,
k\varphi}+ \sum_{j,l=1}^2\|\ov{L}_j v_l\|^2_{h,
k\varphi}+\int_{b\Omega} \big(\partial\dbar\hat\rho(L_1,
\ov{L}_1)\big)|v|^2 e^{-k\varphi}\, dS.
\end{equation}
Therefore,
\begin{align*}
Q_{\tau_k}(u, u) +\|u\|^2 &\gtrsim \tau^2_k (Q^k_\Omega (v, v)+
k\|v\|^2_{h, k\varphi})\\
&\gtrsim \tau^2_k\big(k\|v\|^2_0+ \sum_{j, l=1}^2\|\ov{L}_j
v_l\|^2_0+\int_{b\Omega} \big(\partial\dbar\hat\rho(L_1,
\ov{L}_1)\big)|v|^2\, dS_0 \big) \\
&\gtrsim \tau^2_k\big(k\|v\|^2_0+ \sum_{j, l=1}^2\|\ov{L}_j
v_l\|^2_0+\sum_{l=1}^2\|L_1 v_j\|^2_0\big) \\
&\gtrsim \|u\|^2+ \sum_{j, l=1}^2\|\ov{L}_{j, \tau_k} u_l\|^2+
\sum_{l=1}^2\|{L}_{1, \tau_k} u_l\|^2,
\end{align*}
where $\ov{L}_{j, \tau_k}=\tau_k(\Phi_{z', \tau_k})_*(\ov{L}_j)$.
The estimate \eqref{eq:Q-estimate} then follows from (the proof of
) Lemma~4.5 in \cite{Fu05}. \end{proof}

Once Lemma~\ref{lm:Q-estimate} is established, the proof of
Proposition~\ref{prop:spectral-2} follows along the lines of the
proof of Lemma~6.2 in \cite{Fu05}. Since there are necessary
modifications due to the possible present of the $\dbar$-cohomology,
we provide the necessary details for completeness in the next subsection.

\subsubsection{Comparison with an auxiliary Laplacian}\label{sec:aux}
Let $\eps$ be the order of the Sobolev norm in
Lemma~\ref{lm:Q-estimate}. Let $W_{\eps, \delta_k}$ be the space
of all $u\in L^2(\C^2_{-})$ such that
\begin{equation}\label{m-tau-norm}
\|u\|^2_{\eps, \delta_k}= \vvv u \vvv^2_{\eps}+\delta^{-1}_k\vvv
\frac{\partial u}{\partial\bar w_2}\vvv^2_{-1+\eps} <\infty.
\end{equation}
Let ${\square}_{\eps, \delta_k}$ be the associated densely
defined, self-adjoint operator on $L^2(\C^2_-)$ such that
$\|u\|^2_{\eps, \delta_k}=\|\square_{\eps, \delta_k}^{1/2} u\|^2$
and $\dom(\square^{1/2}_{\eps, \delta_k})=W_{\eps, \delta_k}$. Let
${N}_{\eps, \delta_k}$ be its inverse. Let $\chi (w_1, w_2)$ be a
smooth cut-off function supported on $\{|w_1|<2, |w_2|<2\}$ and
identically 1 on $\{|w_1|<1, |w_2|<1\}$.  Let
$\chi_{\delta_k}(w_1, w_2)=\chi(w_1, \delta^{1/2}_k w_2)$. We use
$\lambda_j(T)$ to denote the $j^{\text{th}}$ singular value
(arranged in a decreasing order and repeated according to
multiplicity) of a compact operator $T$. It then follows from the min-max
principle that
\begin{equation}\label{eq:min-max-t}
\lambda_{j+k+1}(T_1+T_2)\le \lambda_{j+1}(T_1)+\lambda_{k+1}(T_2)
\quad \text{and}\quad \lambda_{j+k+1}(T_3\circ T_1)\le \lambda_{j+1}(T_1)
\lambda_{k+1}(T_3)
\end{equation}
(see \cite{Weidmann80}).

For sufficiently large
$k$ and $j$, we then have
\begin{equation}\label{eq:N-estimate}
\lambda_j(\chi_{\delta_k}{N}^{1/2}_{\eps, \delta_k})\lesssim
(1+j\delta^{1/2}_k)^{-\eps/4}
\end{equation}
(see Lemma~5.3 in \cite{Fu05}).

Let $\kappa$ be a cut-off function compactly supported on $U_{z'}$
and identically 1 on a neighborhood of $z'$ of uniform size.  Let
\[
{\bf E}_{\tau_k}(\lambda)=\scriptg^{-1}_{\tau_k}\kappa {\bf
E}_k(\lambda/\tau_k^2)\kappa\scriptg_{\tau_k}\colon
(L^2({\Omega}_{\tau_k}))^2\to (L^2({\Omega}_{\tau_k}))^2.
\]
The kernel of ${\bf E}_{\tau_k}(\lambda)$ is then given by
\begin{equation}\label{eq:spectral-1}
e_{\tau_k}(\lambda; w, w')=e(\lambda/\tau^2_k; \Phi^{-1}_{\tau_k}
(w), \Phi^{-1}_{\tau_k}(w'))\kappa(w)\kappa(w')|\det
d\Phi^{-1}_{\tau_k}(w)|^{\frac{1}{2}}|\det
d\Phi^{-1}_{\tau_k}(w')|^{\frac{1}{2}}.
\end{equation}

We now proceed to prove Proposition~\ref{prop:spectral-2}. By
\eqref{eq:spectral-1}, it suffices to prove that
\begin{equation}\label{spectral-4}
\int_{P_{\tau_k}(z')\cap {\Omega}_{\tau_k}} \trace e_{\tau_k}(C;
w, w)\, dV_0(w) \lesssim \delta_k^{-1/2}.
\end{equation}
Let $\square_{\tau_k}\colon (L^2({\Omega}_{\tau_k}\cap
P_{\tau_k}(z')))^2\to (L^2({\Omega}_{\tau_k}\cap
P_{\tau_k}(z')))^2$ be the operator associated with the
sesquilinear form $Q_{\tau_k}$ given by \eqref{Q-tau-def} but with
domain
\[\dom(Q_{\tau_k})=\{\scriptg^{-1}_{\tau_k}(u) \mid u\in
\dom(Q^k_\Omega), \supp u\subset
\Phi^{-1}_{\tau_k}(P_{\tau_k}(z'))\}.
\]
Thus $\square_{\tau_k}=\tau^2_k \scriptg_{\tau_k}^{-1}\square
\scriptg_{\tau_k}$. Let $N_{\tau_k}=(I+\square_{\tau_k})^{-1}$. It
follows from Lemma~\ref{lm:Q-estimate} that
\[
Q_{\tau_k}(u, u)+\|u\|^2\gtrsim \|u\|^2_{\eps, \delta_k},
\]
for any $u\in \dom(\square^{1/2}_{\tau_k})$. Therefore,
\[
\|u\|^2=Q_\tau(N^{1/2}_{\tau_k} u, N^{1/2}_{\tau_k}
u)+\|N^{1/2}_{\tau_k} u\|^2\gtrsim \|{\square}^{1/2}_{\eps,
\delta_k} N^{1/2}_{\tau_k} u\|^2.
\]
Thus $N^{1/2}_{\tau_k}=\chi_{\delta_k}N^{1/2}_{\tau_k}
=\chi_{\delta_k}{N}^{1/2}_{\eps, \delta_k}{\square}^{1/2}_{\eps,
\delta_k}N^{1/2}_{\tau_k}$. It follows from \eqref{eq:min-max-t}
and \eqref{eq:N-estimate} that
\begin{equation}\label{n-tau}
\lambda_j(N_{\tau_k}^{1/2}) \lesssim
\lambda_j(\chi_{\delta_k}{N}_{\eps, \delta_k}^{1/2})\lesssim
(1+j\delta^{1/2}_k)^{-\eps/4}.
\end{equation}

Let $K$ be any positive integer such that $K>4/\eps$.  Let
$\chi^{(j)}$, $j=0, 1, \ldots, K$, be a family of cut-off
functions supported in $\{|w_1|<1,  |w_2|<1\}$ such that
$\chi^{(0)}=\chi$ and $\chi^{(j+1)}=1$ on $\supp \chi^{(j)}$. Let
\[
{\bf E}^{(l)}_{\tau_k}(\lambda)=\scriptg^{-1}_{\tau_k} \kappa
(\tau^2_k\square^k_\Omega)^l {\bf
E}_k(\lambda\tau_k^{-2})\kappa\scriptg_{\tau_k}\colon
(L^2({\Omega}_{\tau_k}))^2\to (L^2({\Omega}_{\tau_k}))^2.
\]
Thus ${\bf E}^{(0)}_{\tau_k}(\lambda)={\bf E}_{\tau_k}(\lambda)$
and
\begin{equation}\label{e-ell}
\|{\bf E}^{(l)}_{\tau_k} (C) u\|\lesssim \|u\|.
\end{equation}
Furthermore,
\begin{equation}\label{q-q}
Q_{{\tau_k}}(\chi^{(j)}_{\delta_k} {\bf E}^{(l)}_{\tau_k}(C)u)
={\tau_k}^2 Q^k_\Omega(\chi^{(j)}_{\delta_k}(\Phi_{{\tau_k}})
({\tau_k}^2\square^k_\Omega)^l {\bf
E}(C{\tau_k}^{-2})\kappa\scriptg_{{\tau_k}} u)
\end{equation}
(Here we use $Q(u)$ to denote $Q(u, u)$ for abbreviation.)

It is straightforward to check that
\begin{equation}\label{Q-commute}
Q^k_\Omega(\theta u, \theta u)=\Re \langle\langle\theta
\square^k_\Omega u, \theta u\rangle\rangle+(1/2)\langle\langle u,
[\theta, A] u\rangle\rangle
\end{equation}
where $\theta$ is any smooth function on $\overline{\Omega}$ and
$A=[\dbar^*, \theta]\dbar+\dbar [\dbar^*, \theta]+\dbar^*[\dbar,
\theta]+[\dbar, \theta]\dbar^*$. (Here $\dbar^*=\dbar^*_{h,
k\varphi}$ is the adjoint of $\dbar$ with respect to the base
metric $h$ and fiber metric $k\varphi$.) Note that $[\theta, A]$
is of zero order and its sup-norm is bounded by a constant
independent of $k$.

It follows from \eqref{q-q}, \eqref{Q-commute}, and the Schwarz
inequality that for any $u\in (L^2({\Omega}_{\tau_k}))^2$,
\begin{equation}\label{q-q-2}
Q_{{\tau_k}}(\chi^{(l')}_{\delta_k} {\bf E}^{(l)}_{\tau_k}(C)u)
\lesssim \|\chi^{(l')}_{\delta_k} {\bf E}^{(l+1)}_{\tau_k} (C)
u\|^2+\|\chi^{(l'+1)}_{\delta_k} {\bf E}^{(l)}_{\tau_k} (C) u\|^2.
\end{equation}
Hence
\begin{align*}\label{q-q-3}
\|(I+\square_{\tau_k})^{1/2}\chi^{(l')}_{\delta_k} {\bf
E}^{(l)}_{\tau_k}(C)u\|^2&=Q_{{\tau_k}}(\chi^{(l')}_{\delta_k}
{\bf E}^{(l)}_{\tau_k}(C)u)+\|\chi^{(l')}_{\delta_k} {\bf
E}^{(l)}_{\tau_k}(C)u\|^2\\
&\lesssim \|\chi^{(l')}_{\delta_k} {\bf E}^{(l+1)}_{\tau_k} (C)
u\|^2+\|\chi^{(l'+1)}_{\delta_k} {\bf E}^{(l)}_{\tau_k} (C) u\|^2
+\|\chi^{(l')}_{\delta_k} {\bf
E}^{(l)}_{\tau_k}(C)u\|^2\\
\end{align*}
It then follows from \eqref{eq:min-max-t} that
\begin{equation}\label{lambda-j}
\begin{aligned}
\lambda_{4j+1}(\chi^{(l')}_{\delta_k} {\bf E}^{(l)}_{\tau_k}
(C))&\le \lambda_{j+1}
(N_{\tau_k}^{1/2})\lambda_{3j+1}((I+\square_{\tau_k})^{1/2}_{\tau_k}
\chi^{(l')}_{\delta_k} {\bf E}^{(l)}_{\tau_k} (C))\\
&\le \lambda_{j+1}
(N_{\tau_k}^{1/2})\big(\lambda_{j+1}(\chi^{(l')}_{\delta_k} {\bf
E}^{(l+1)}_{\tau_k} (C)) +\lambda_{j+1}(\chi^{(l'+1)}_{\delta_k}
{\bf E}^{(l)}_{\tau_k}(C)) \\
&\qquad +\lambda_{j+1}(\chi^{(l')}_{\delta_k} {\bf
E}^{(l)}_{\tau_k}(C)) \big).
\end{aligned}
\end{equation}
Using \eqref{n-tau}, \eqref{e-ell}, and\eqref{lambda-j}, we then
obtain by an inductive argument on $K-(l+l')$ that
\begin{equation}\label{lambda-e-ell}
\lambda_j(\chi^{(l')}_{\delta_k} {\bf E}^{(l)}_{\tau_k}
(C))\lesssim (1+j{\delta_k}^{1/2})^{-(K-(l+l'))\eps/4}
\end{equation}
for any pair of non-negative integers $l, l'$ such that $0\le
l+l'\le K$ and for all $j>C_1\delta_k^{-1/2}$, where $C_1$ is a sufficiently
large constant. In particular,
\[
\lambda_j(\chi_{\delta_k} {\bf E}_{\tau_k}(C))\lesssim
(1+j{\delta_k}^{1/2})^{-K\eps/4}.
\]
Since ${\bf E}_{\tau_k}(C)$ has uniformly bounded operator norms, we also have that
$\lambda_j(\chi_{\delta_k} {\bf E}_{\tau_k}(C))\lesssim 1$.  The
trace norm of $\chi_{\delta_k} {\bf E}_{\tau_k} (C)$ is then given
by
\[
\sum_{j\lesssim{\delta_k}^{-\frac{1}{2}}} \lambda_j(\chi_{\delta_k}
{\bf E}_{\tau_k}(C))+\sum_{j\gtrsim{\delta_k}^{-\frac{1}{2}}}
\lambda_j(\chi_{\delta_k} {\bf E}_{\tau_k}(C)) \lesssim
{\delta_k}^{-\frac{1}{2}}+ \sum_{j\gtrsim{\delta_k}^{-\frac{1}{2}}}
(1+j{\delta_k}^{-\frac{1}{2}})^{-\frac{K\eps}{4}} \lesssim
{\delta_k}^{-\frac{1}{2}}.
\]
Inequality \eqref{spectral-4} is now an easy consequence of the
above estimate.

\subsection{Estimate of the type}\label{sec:est-type}
In this section, we prove the necessity in Theorem~\ref{main-theorem}. More precisely, we prove the following:

\begin{proposition}\label{prop:est-type}
Let $\Omega\subset\subset X$ be a smoothly bounded pseudoconvex domain
in a complex surface. Let $E$ be holomorphic line bundle over
$\Omega$ that extends smoothly to $b\Omega$. Let $M>0$. If for any $C>0$, there exists $C'>0$ such that $N_k(Ck)\le C' k^M$ for all sufficiently large integer $k$, then the type of the domain of $b\Omega$ is $\le 8M$.
\end{proposition}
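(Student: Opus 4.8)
\emph{Proof idea.} The plan is to argue by contraposition: assuming the type of $b\Omega$ exceeds $8M$, I would exhibit one fixed constant $C>0$ for which $N_k(Ck)$ fails to be $O(k^M)$. Since the type exceeds $8M$ I can choose an integer $m>4M$ together with a boundary point $z_0'$ at which the order of contact of analytic curves is at least $2m$ (if $b\Omega$ has finite type $2m_0>8M$ take $m=m_0$ and $z_0'$ an extremal point; if $b\Omega$ has infinite type take any integer $m>4M$ and a point of order of contact $\ge 2m$). Near $z_0'$ I would work in the holomorphic coordinates and the defining-function normal form of Section~\ref{sec:finite-type}, normalize the base metric $h$ and the fiber metric $\varphi$ of $E$ as in \eqref{eq:base-metric} and \eqref{eq:fiber-metric}, and set $\tau_k=1/\sqrt k$ and $\delta_k=\delta(z_0',\tau_k)$, so that $\delta_k\lesssim\tau_k^{2m}$.

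The reduction I would use is the min-max characterization of eigenvalues: for every $C>0$,
\[
N_k(Ck)\ \ge\ \dim V\qquad\text{whenever}\qquad V\subset\dom(Q^k_\Omega)\ \text{and}\ Q^k_\Omega(u,u)\le Ck\,\|u\|^2\ \text{for all}\ u\in V .
\]
Hence it is enough to construct, for a single $C$ independent of $k$, such a space $V=V_k$ with $\dim V_k\gtrsim\delta_k^{-1/4}$: indeed $\delta_k^{-1/4}\gtrsim k^{m/4}$ with $m/4>M$, so this contradicts $N_k(Ck)\le C'k^M$.

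To build $V_k$ I would transplant the problem by the anisotropic rescaling of Section~\ref{sec:kohn}: with the flattening $\Phi_{z_0'}$, the dilation $D_{z_0',\tau_k}$ and the map $\scriptg_{\tau_k}$, the form $Q^k_\Omega$ becomes $Q_{\tau_k}(u,u)=\tau_k^2Q^k_\Omega(\scriptg_{\tau_k}u,\scriptg_{\tau_k}u)$ on the rescaled domain $\Omega_{\tau_k}$, and, since $\tau_k^2k=1$ and $\scriptg_{\tau_k}$ is an $L^2$-isometry, a space of $(0,1)$-forms supported in $P_{\tau_k}(z_0')\cap\Omega_{\tau_k}$ on which $Q_{\tau_k}(u,u)\le C\|u\|^2$ transplants to a subspace of $\dom(Q^k_\Omega)$ of the same dimension with $Q^k_\Omega\le Ck\,\|\cdot\|^2$. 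On the rescaled side I would construct, following the proof of the corresponding estimate in \cite{Fu05}, a family of $\gtrsim\delta_k^{-1/4}$ mutually orthonormal $(0,1)$-forms, built from the anisotropic geometry at $z_0'$ and supported in — and bounded away from the boundary of — $P_{\tau_k}(z_0')\cap\Omega_{\tau_k}$, so that they automatically lie in $\dom(\dbarstar)$ and no $\dbar$-cohomology considerations enter, and satisfying $Q_{\tau_k}(u,u)\le C\|u\|^2$ with $C$ depending only on the local geometry at $z_0'$. The one input needed beyond \cite{Fu05} is that the curvatures are harmless: on $R_{\tau_k}(z_0')$ one has $|h_{jl}-\delta_{jl}|\lesssim\tau_k$ and, because $\varphi(z)=O(|z|^2)$ with $|z|\lesssim\tau_k$ there, $|k\varphi|\lesssim 1$ and $|\partial(k\varphi)|\lesssim k^{1/2}$; running the integrations by parts of the proofs of Lemmas~\ref{lemma:garding1} and \ref{lm:Q-estimate}, the fiber metric contributes to $\tau_k^2Q^k_\Omega$ only a term of order $\tau_k^2\cdot k\|\cdot\|^2=\|\cdot\|^2$, which is of admissible order, while the $O(\tau_k)$ deviation of $h$ from the Euclidean metric and the bounded connection and curvature of $X$ contribute $o(1)$ after rescaling. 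Transplanting back then gives $N_k(Ck)\ge\dim V_k\gtrsim\delta_k^{-1/4}\gtrsim k^{m/4}$ for this fixed $C$, contradicting $N_k(Ck)\le C'k^M$ because $m/4>M$; hence the type of $b\Omega$ is $\le 8M$.

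The step I expect to be the main obstacle is the construction of the $\gtrsim\delta_k^{-1/4}$ test forms together with the uniform upper bound on $Q_{\tau_k}$: this is the most delicate ingredient of \cite{Fu05}, resting on a careful choice of model profile and on the doubling and engulfing properties of the anisotropic bidiscs $R_\tau(z')$ recalled in Section~\ref{sec:finite-type}. The new point required here — that the curvatures of the metric on $X$ and of $E^k$ enter only at subleading order once the operator has been rescaled at the magnetic scale $\tau_k$ — is comparatively routine, being the same mechanism already exploited in Lemmas~\ref{lemma:garding1} and \ref{lm:Q-estimate}.
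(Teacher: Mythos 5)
Your overall strategy---contraposition, a boundary point of type at least $2m$ with $m>4M$, a family of $\gtrsim k^{m/4}$ low-energy test $(0,1)$-forms fed into a variational eigenvalue count, and the observation that $k|\varphi|\lesssim 1$, $k|\nabla\varphi|\lesssim k^{1/2}$ on the relevant scales makes the curvatures harmless---is the same as the paper's. But there is a genuine gap in precisely the step you defer to \cite{Fu05}: you prescribe test forms that are supported in, and \emph{bounded away from the boundary of}, $P_{\tau_k}(z_0')\cap\Omega_{\tau_k}$, ``so that they automatically lie in $\dom(\dbarstar)$.'' No such family with the required cardinality exists. Forms supported at positive distance from $b\Omega$ only see the interior, elliptic (magnetic Schr\"odinger) problem, for which the number of states of energy $\le Ck$ is $O(k^2)$ (this is the content of \eqref{eq:garding} and the G{\aa}rding argument of Lemma~\ref{lemma:garding1}); since $M$ may well be $\ge 2$, interior-supported forms can never produce $\gtrsim \delta_k^{-1/4}\gtrsim k^{m/4}$ states below $Ck$. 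Concretely, the quasimodes that make the argument of \cite{Fu05} (and of the paper) work are $u_{j,l}\sim l^{1/2}2^{2(m+1)j}a(2^{2j}z_1)B(2^{2mj}z_2)e^{2\pi l 2^{2mj}z_2}\,\overline{\omega}_1$ with $k=2^{4j}$ and $l\sim 2^{mj}/j$: essentially all of their $L^2$ mass lies within depth $\sim(l\,2^{2mj})^{-1}$ of $b\Omega$, and it is exactly the flatness of the boundary (type $\ge 2m$) that keeps their Rayleigh quotients $\lesssim k$. Any truncation keeping them a definite distance from the boundary either annihilates most of the norm or introduces $\partial/\partial\bar z_2$ terms of size $\sim l\,2^{2mj}\sim 2^{3mj}/j$, whose square $\sim 2^{6mj}/j^2$ dwarfs the budget $k=2^{4j}$; either way $Q^k_\Omega(u,u)\le Ck\|u\|^2$ fails. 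The correct way to handle the boundary condition is not to retreat into the interior but to take the forms proportional to the tangential covector $\overline{\omega}_1$ only: then the $\dbar$-Neumann condition holds even though the support meets $b\Omega$, and no cohomological considerations enter this direction of the theorem in any case.

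A secondary, repairable point: the variational principle in the form you quote requires $Q^k_\Omega(u,u)\le Ck\|u\|^2$ for \emph{every} $u$ in the span of the test forms, not merely for the basis elements, and the wavelet-type family is only almost orthogonal, so you would also need control of the off-diagonal entries $Q^k_\Omega(u_{j,l},u_{j,l'})$. The paper sidesteps this by proving the trace-type comparison Lemma~\ref{lm:min-max}, which needs only the lower bound $\|\sum_l c_l u_{j,l}\|^2\gtrsim\sum_l|c_l|^2$ together with the diagonal estimates $Q^k_\Omega(u_{j,l},u_{j,l})\lesssim k$, summed against $\sum_l\lambda_{k,l}$; you should either adopt that device or verify the off-diagonal form bounds explicitly.
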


The proof of the above proposition, using a wavelet construction of Lemari\'{e} and Meyer \cite{LemarieMeyer86}, is a modification of the proof of Theorem~1.3 in \cite{Fu05}.
We provide the full details below. We begin with the following simple well-known consequence of the min-max principle.

\begin{lemma}\label{lm:min-max} Let $Q$ be a semi-positive, closed, and densely defined
sesquilinear form on a Hilbert space. Let $\square$ be the associated densely defined self-adjoint operator operator. Let $\{u_l \mid 1\le l\le k\}\subset\dom(Q)$.  Let $\lambda_l$ and $\tilde\lambda_l$ be the $l^{\text{th}}$-eigenvalues of operator $\square$ and the hermitian matrix $(Q(u_j, u_l))_{1\le j, l\le k}$ respectively. If
\[
\|\sum_{l=1}^k c_l u_l\|^2\ge C\sum_{l=1}^k |c_l|^2
\]
for all $c_l\in \C$, then $\tilde\lambda_l\ge C\lambda_l$, $1\le l\le k$.
\end{lemma}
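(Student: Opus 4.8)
The plan is to read the statement as an instance of the variational (Courant--Fischer) characterization of the eigenvalues of $\square$, applied to the finite-dimensional subspace spanned by the $u_l$. First I would record that the hypothesis
\[
\Big\|\sum_{l=1}^k c_lu_l\Big\|^2\ge C\sum_{l=1}^k|c_l|^2,\qquad C>0,
\]
forces $u_1,\dots,u_k$ to be linearly independent, so that $W=\operatorname{span}\{u_1,\dots,u_k\}$ is a $k$-dimensional subspace of $\dom(Q)$. For $u=\sum_{l}c_lu_l\in W$ with coefficient vector $\mathbf c=(c_1,\dots,c_k)$, write $\|u\|^2=\mathbf c^{*}G\mathbf c$ and $Q(u,u)=\mathbf c^{*}A\mathbf c$, where $G=(\langle u_j,u_l\rangle)_{j,l}$ and $A$ is, up to a transposition that does not affect the spectrum, the Hermitian matrix $(Q(u_j,u_l))_{j,l}$; the hypothesis says exactly $G\ge CI$ as Hermitian matrices, while $\tilde\lambda_1\le\dots\le\tilde\lambda_k$ are the eigenvalues of $A$.

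Next, fix $1\le l\le k$ and let $S\subset\C^k$ be an $l$-dimensional subspace on which the Rayleigh quotient of $A$ does not exceed $\tilde\lambda_l$ --- for instance the span of eigenvectors of $A$ associated with $\tilde\lambda_1,\dots,\tilde\lambda_l$ --- so that $\mathbf c^{*}A\mathbf c\le\tilde\lambda_l\|\mathbf c\|^2$ for all $\mathbf c\in S$. Put $V=\{\sum_l c_lu_l:\mathbf c\in S\}\subset W$; by the linear independence above, $\dim V=\dim S=l$. Then for every $u\in V\setminus\{0\}$, with coefficient vector $\mathbf c\in S$,
\[
\frac{Q(u,u)}{\|u\|^2}=\frac{\mathbf c^{*}A\mathbf c}{\mathbf c^{*}G\mathbf c}\le\frac{\tilde\lambda_l\|\mathbf c\|^2}{C\|\mathbf c\|^2}=\frac{\tilde\lambda_l}{C}.
\]

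Finally I would invoke the form-domain version of the min--max principle,
\[
\lambda_l=\inf\Big\{\sup_{u\in V'\setminus\{0\}}\frac{Q(u,u)}{\|u\|^2}\ :\ V'\subset\dom(Q),\ \dim V'=l\Big\},
\]
with the usual convention that this quantity is the $l$-th eigenvalue of $\square$ as long as it lies below $\inf\sigma_{\mathrm{ess}}(\square)$, so that the conclusion is meaningful even when $\square$ has no compact resolvent. Taking $V'=V$ gives $\lambda_l\le\tilde\lambda_l/C$, that is $\tilde\lambda_l\ge C\lambda_l$, which is the assertion. The argument is essentially bookkeeping with the two quadratic forms $G$ and $A$; the only points that need care are the passage from the hypothesis to $G\ge CI$, the dimension count (which uses the linear independence of the $u_l$), and stating the interpretation of $\lambda_l$ via min--max. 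There is no substantive obstacle.
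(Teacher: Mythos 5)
Your proposal is correct and follows essentially the same route as the paper: both compare the min--max characterizations of the matrix $(Q(u_j,u_l))$ and of $\square$ through the map $\mathbf c\mapsto\sum_l c_lu_l$, using the hypothesis (the Gram matrix bound $G\ge CI$, with semi-positivity of $Q$) to control the Rayleigh quotients on the transported $l$-dimensional subspaces. Your version merely makes explicit the linear-independence/dimension count and the choice of the optimal eigenvector subspace, where the paper takes an infimum over all $l$-dimensional subspaces of $\C^k$; there is no substantive difference.
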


\begin{proof} By the min-max principle,
\[
\tilde\lambda_l=\inf\{\tilde\lambda (\tilde L) \mid  \tilde L \text{ is an $l$-dimensional subspace of $\C^k$}\}
\]
where
\[
\tilde\lambda(\tilde L)=\sup\{\sum_{j, l=1}^k c_j \bar c_l Q(u_j,
u_l) \mid  (c_1, \ldots, c_k)\in \tilde L, \sum_{l=1}^k |c_l|^2=1\}.
\]
Likewise,
\[
\lambda_l=\inf_{\substack{L\subset \dom(Q)\\ \dim(L)=l}}\sup\{Q(u, u) \mid
u\in L, \|u\|=1\}.
\]
For any $l$-dimensional subspace $\tilde L$ of $\C^k$,
let $L=\{\sum_{l=1}^k c_l u_l \mid  (c_1, \ldots, c_k)\in \tilde
L\}$ and let
\[
\lambda(L)=\sup\{Q(u, u) \mid  u\in L, \|u\|=1\}.
\]
Then $\tilde\lambda (\tilde L)\ge C_2\lambda(L)$.  Hence
$\tilde\lambda_l\ge C_2\lambda_l$ for all $1\le l\le k$.
\end{proof}

Let $z'\in b\Omega$. We follow the notations and setup as in the proof of Proposition~\ref{prop:kernel-estimate}.  Suppose the type of $b\Omega$ is
$\ge 2m$ at $z'$. Then $P(z_1)=O(|z_1|^{2m})$. It follows from \eqref{FS-2} that
$Q(z_1)=0$. Hence
\begin{equation}\label{eq:h-est}
\psi(z_1, \Im z_2)=O(|z_1|^{2m}+|\Im z_2||z_1|^{m+1}+|\Im z_2|^2|z_1|).
\end{equation}

Let $b(t)$ be a smooth function
supported in $[-1/2, \ 1]$ such that $b(t)= 1$ on $[0, \ 1/2]$ and $b^2(t)+b^2(t-1)=
1$ on $[1/2, \ 1]$. It follows that  $\{b(t) e^{2\pi l t i} \mid l\in\Z\}$ is an orthogonal system\footnote{In fact, it was shown by Lemari\'{e} and Meyer \cite{LemarieMeyer86} that the Fourier transform of $b(t)$ is a wavelet} in $L^2(\R)$ (\cite{LemarieMeyer86}; see also \cite{Daubechies88, HernandezWeiss96}).
Write $z_2=s+it$. Let $\chi$ be any smooth cut-off function supported on $(-2,\ 2)$ and identically 1 on $(-1, 1)$ and let
\[
B(z_2)=(b(t)-ib'(t)s-b''(t)s^2/2)\chi(s/(1+|t|^2)).
\]
Then $B(0,t)=b(t)$ and $|\partial B(z_2)/\partial\bar z_2|\lesssim
|s|^2$. Let $a(z_1)$ be a smooth function identically 1 on $|z_1|\le 1/2$ and
supported on the unit disc. For any
positive integers $j$ and for any positive integer $l$ such that
$2^{mj-1}/j\le l\le 2^{mj}/j$, let
\[
u_{j, l}(z)=l^{1/2}2^{2(m+1)j} a(2^{2j}z_1)B(2^{2mj}z_2)e^{2\pi l 2^{2mj}z_2}e^{k\varphi(z)/2}(g(z))^{-1/2}\overline{\omega}_1,
\]
where $g(z)=\det(h_{jl}(z))$. For any sufficiently large $j$, $u_{j, l}$ is a compactly
supported smooth $(0, 1)$-form in $\dom(Q^k_{\Omega})$. (Recall that $Q^k_\Omega$ is
the sesquilinear form associated with the $\dbar$-Neumann Laplacian $\square^k_\Omega$ on $\Omega$ for $(0, 1)$-forms with values in $E^k$.)
Moreover, after the substitutions $(z_1, z_2)\to (2^{-2j} z_1, 2^{-2mj}z_2)$, we have
\[
\|u_{j, l}\|^2_{h, k\varphi}\
=l\int_{\C} |a(z_1)|^2 dV_0(z_1)\int_\R \,dt\int^{-2^{2mj}\psi(2^{-2j}z_1, 2^{-2mj}t)}_{-\infty}
|B(z_2)|^2 e^{4\pi l s}\, ds.
\]
By \eqref{eq:h-est}, $|2^{2mj}\psi(2^{-2j}z_1, 2^{-2mj}t)|\lesssim 2^{-2mj}$.
Note also that $l 2^{-2mj}\le j^{-1}2^{-mj}$. Thus
\begin{align*}
\|u_{j, l}\|^2_{h, k\varphi} &\gtrsim l\int_{\C} |a(z_1)|^2
dV_0(z_1)\int_\R \,d t\int^{-C 2^{-2mj}}_{-\infty} |B(z_2)|^2 e^{4\pi l s}\, d s\\
&\gtrsim \int_{\C}|a(z_1)|^2 dV_0(z_1)\int_0^{1/2} dt
\int_{-1}^{-C2^{-2mj}} l e^{4\pi l s}\, ds\gtrsim 1.
\end{align*}
Similarly, $\|u_{j, l}\|^2_{h, k\varphi}\lesssim 1$, and hence $\|u_{j,
l}\|^2_{h, k\varphi} \approx 1$. Also, for any $l$ and $l'$ such that $2^{mj-1}/j\le l, l'
\le 2^{mj}/j$,
\[
\langle\langle u_{j, l}, u_{j, l'}\rangle\rangle_{h, k\varphi} =\sqrt{ll'} \int_{\C} |a(z_1)|^2 dV_0\int_\R dt
\int^{-2^{2mj}\psi(2^{-2j}z_1, 2^{-2mj}t)}_{-\infty}
|B(z_2)|^2 e^{2\pi((l+l')s+i(l-l')t)}\, ds
\]
We decompose the above integral into two parts.
Let $A$ be the above expression  with the upper limit in the last
integral over $s$ replaced by $0$ but keep the lower limit.
Let $B$ likewise be the expression with the lower limit replaced by
$0$ but keep the upper limit. Hence $\langle\langle u_{j, l}, u_{j, l'}\rangle\rangle_{h, k\varphi}=A+B$.
We first estimate $B$:
\begin{align*}
|B| &\le \sqrt{ll'} \int_{\C} |a(z_1)|^2 dV_0\int_\R
d t \int^{-2^{2mj}\psi(2^{-2j}z_1, 2^{-2mj}t)}_0
|B(z_2)|^2
e^{2\pi(l+l')s}\, d s \\
&\lesssim \frac{\sqrt{ll'}}{l+l'}\left(1-e^{C(l+l')2^{-2mj}}\right)
\lesssim j^{-1}2^{-mj}.
\end{align*}
To estimate $|A|$, we use the orthogonality of the system of functions
$\{b(t)e^{2\pi lt i} \mid l\in
\Z\}$ in $L^2(\R)$.  It follows that if $l\not= l'$, then
\[
A=\sqrt{ll'}\int_{\C} |a(z_1)|^2 dV_0 \int_\R d
t\int^0_{-\infty} \left(|B(s, t)|^2-|B(0, t)|^2\right)e^{2\pi((l+l')s+i(l-l')t)} \,ds.
\]
Therefore,
\[
|A|\lesssim \sqrt{ll'}\int_{\C} |a(z_1)|^2 dV_0
\int_{-1}^1\,d t\int^0_{-\infty}  s
e^{2\pi(l+l')s}\, d s \lesssim \sqrt{ll'}/(l+l')^2\lesssim j^{-1}2^{-mj}.
\]
For sufficiently large $j$ and for any $k$, $l$ such that
$2^{mj-1}/j\le l, l' \le 2^{mj}/j$, $l\not= l'$, we then have,
\[
|\langle\langle u_{j, l},\ u_{j, l'}\rangle\rangle_{h, k\varphi} |\lesssim j^{-1}2^{-mj}.
\]
For any $c_l\in\C$, we have
\begin{align*}
\|\sum_l c_l u_{j, l}\|^2_{h, k\varphi} &=\sum_{l} |c_l|^2\|u_{j,
l}\|^2_{h, k\varphi}-\sum_{\substack{l, l'\\ l\not=l'}}
c_l\overline{c}_{l'} \langle\langle u_{j, l}, \ u_{j, l'}\rangle\rangle_{h, k\varphi} \\
&\ge \sum_{l} |c_l|^2\|u_{j, l}\|^2_{h, k\varphi}-j^{-1}2^{-mj}\big|\sum_{l}
c_l\big|^2 \\
&\gtrsim (1-j^{-2})\sum_{l} |c_l|^2 \gtrsim \sum_{l} |c_l|^2,
\end{align*}
where the summations are taken over all integers $l$ between
$2^{mj-1}/j$ and $2^{mj}/j$.

Write $L^{k\varphi}_1=e^{k\varphi}L_1 e^{-k\varphi}$. Then
\begin{equation}
Q^k_{\Omega}(u_{j, l}, u_{j, l}) \lesssim \|u_{j,
l}\|^2_{h, k\varphi}+\|\overline{L}_2 u_{j, l}\|^2_{h, k\varphi}+\|L^{k\varphi}_1 u_{j, l}\|^2_{h, k\varphi}.
\end{equation}
Recall that
\[
\varphi(z)=O(|z|^2) \quad \text{and}\quad |h_{jl}(z)-\delta_{jl}|\lesssim |z|.
\]
Moreover,
\begin{align*}
\overline{L}_2&=O(|z|)\frac{\partial}{\partial\bar z_1}+O(1)\frac{\partial}{\partial \bar z_2}\\
\intertext{and}
L_1&=O(1)\frac{\partial}{\partial z_1}+O(|z_1|^{2m-1}+|\Im z_2||z_1|^m+|\Im z_2|^2)\frac{\partial}{\partial z_2}.
\end{align*}
Let $k=2^{4j}$.  It follows that when $|z_1|\lesssim 2^{-2j}$ and $|z_2|\lesssim 2^{-2mj}$,
\[
k|\varphi(z)|\lesssim 1 \quad \text{and}\quad k|\nabla\varphi(z)|\lesssim 2^{2j}.
\]
Therefore, on the one hand,
\begin{align*}
\|\overline{L}_2 u_{j, l}\|^2_{h, k\varphi}&\lesssim \||z|\frac{\partial u_{j, l}}{\partial\bar z_1}\|^2_{h, k\varphi}+\|\frac{\partial u_{j, l}}{\partial\bar z_2}\|^2_{h, k\varphi}\\
&\lesssim  2^{4j}+ 2^{4mj}\int_{\C}|a(z_1)|^2\,dV_0\int_{-1}^1\,d t \int^{-2^{2mj}
\psi(2^{-2j}z_1, 2^{-2mj}t)}_{-\infty} l\big|\frac{\partial
B}{\partial \bar z_2}\big|^2 e^{4\pi l s}\,d s\\
&\lesssim 2^{4j}+2^{4mj}\int_{\C}|a(z_1)|^2\,dV_0\int_{-1}^1\,dt \int^{C2^{-2mj}}_{-\infty} l s^4 e^{4\pi l s}\,d s\\
&\lesssim 2^{4j}+2^{4mj}l^{-4}\lesssim 2^{4j}.
\end{align*}
On the other hand,
\begin{align*}
\|L^{k\varphi}_1 u_{j, l}\|^2_{h, k\varphi}&\lesssim \|(k\nabla \varphi)u_{j, l}\|^2_{h, k\varphi}+\big\|\frac{\partial u_{j,
l}}{\partial z_1}\big\|^2_{h, k\varphi}+\big\|(|z_1|^{2m-1}+|t||z_1|^m+t^2)\frac{\partial
u_{j, l}}{\partial z_2}\big\|^2_{h, k\varphi}\\
&\lesssim 2^{4j}+2^{-4mj+4j} \int_{\C} |a(z_1)|^2
dV_0\int_{-1}^1\, d t \int^{C2^{-2mj}}_{-\infty}   l^3e^{4\pi l s}\, ds\\
&\lesssim 2^{4j}.
\end{align*}
We thus have
\begin{equation}\label{eq:q-k}
Q^k_{\Omega}(u_{j, l}, u_{j, l})\lesssim 2^{4j}.
\end{equation}
Let $\lambda_{k, l}$ be the $l^{\text{th}}$ eigenvalues of $\square^k_\Omega$. The hypothesis of Proposition~\ref{prop:est-type} implies that
\begin{equation}\label{eq:q-k-2}
\lambda_{k, l}\ge Ck
\end{equation}
when $l>C'k^M$. Proving by contradiction, we suppose that $M<m/4$. It follows from Lemma~\ref{lm:min-max} and \eqref{eq:q-k-2} that,
\begin{equation}\label{eq:sum-est}
\sum_{l=j^{-1}2^{mj-1}}^{j^{-1}2^{mj}}Q^k_\Omega(u_{j, l}, u_{j, l}) \ge
\sum_{l=1}^{j^{-1}2^{mj-1}} \lambda_{k, l}\ge C(j^{-1}2^{mj-1}-C'k^M)k.
\end{equation}
Combining \eqref{eq:q-k} and \eqref{eq:sum-est}, we then have
\[
j^{-1}2^{mj-1} 2^{4j} \gtrsim C(j^{-1}2^{mj-1}-C'k^M)k.
\]
Dividing both sides by $k=2^{4j}$ and $j^{-1}2^{mj-1}$, we obtain
\[
1\gtrsim C(1-C' 2^{(4M-m)j+1}).
\]
Since by assumption, $C$ can be chosen arbitrarily large, we
arrive at a contradiction by letting $j\to\infty$. We thus conclude the proof of Proposition~\ref{prop:est-type}.

\bibliography{survey}
%

\providecommand{\bysame}{\leavevmode\hbox
to3em{\hrulefill}\thinspace}

\end{document}